\newtheorem{theorem}{Theorem}[section]
\newtheorem{proposition}[theorem]{Proposition}
\newtheorem{corollary}[theorem]{Corollary}
\newtheorem{conjecture}[theorem]{Conjecture}
\newtheorem{definition}[theorem]{Definition}
\newtheorem{lemma}[theorem]{Lemma}
\newtheorem{remark}[theorem]{Remark}
\newcommand{\greatcircle}[5][]{%
%\path[#1,pattern=north west lines,pattern color=#1!60,rotate=#5,dashed] (#2) circle [x radius=#3, y radius=#4];
\path[#1,rotate=#5] (#2) circle [x radius=#3, y radius=#4];

\begin{scope}[rotate=#5]
\clip (#3,0) rectangle ([xshift=-0.1,yshift=-0.1]-#3,-#4);
\draw[#1] (#2) circle [x radius=#3, y radius=#4];
\end{scope}

\begin{scope}[rotate=#5, yscale=-1,xscale=-1]
\clip (#3,0) rectangle ([xshift=-0.1,yshift=-0.1]-#3,-#4);
\draw[#1, dashed] (#2) circle [x radius=#3, y radius=#4];
\end{scope}

}
\DeclareMathOperator{\vc}{\mathbf{c}}
\DeclareMathOperator{\vx}{\mathbf{x}}
\DeclareMathOperator{\vv}{\mathbf{v}}
\DeclareMathOperator{\vy}{\mathbf{y}}
\DeclareMathOperator{\vz}{\mathbf{z}}
\DeclareMathOperator{\vw}{\mathbf{w}}
\DeclareMathOperator{\mcA}{\mathcal{A}}
\DeclareMathOperator{\mcC}{\mathcal{C}}
\DeclareMathOperator{\mcH}{\mathcal{H}}
\DeclareMathOperator{\mcL}{\mathcal{L}}
\DeclareMathOperator{\mcM}{\mathcal{M}}
\DeclareMathOperator{\mcP}{\mathcal{P}}
\DeclareMathOperator{\mcT}{\mathcal{T}}
\DeclareMathOperator{\mcV}{\mathcal{V}}
\DeclareMathOperator{\diam}{diam}
\DeclareMathOperator{\sgn}{sign}
\DeclareMathOperator{\rank}{rank}
\DeclareMathOperator{\bbR}{\mathbb{R}}
\DeclareMathOperator{\bbS}{\mathbb{S}}
\DeclareMathOperator{\zerovec}{\mathbf{0}}
\DeclareMathOperator{\sign}{sign}
\DeclareMathOperator{\supp}{supp}
\newcommand\commentout[1]{}
\title[Diameter of Oriented Matroids]{Diameters of Cocircuit Graphs of Oriented Matroids: An Update} %TODO Please add
\author[I.\,Adler]{Ilan Adler}
\address{Dept. of Industrial Engineering and Operations Research, Univ. of California, Berkeley}
\email[I.\,Adler]{adler@ieor.berkeley.edu}
\author[J.\,A. De Loera]{Jes\'us A. De Loera}
\address{Department of Mathematics, Univ. of California, Davis}
\email[J.\,A. De Loera]{deloera@math.ucdavis.edu}
\author[S.\,Klee]{Steven Klee}
\address{Department of Mathematics, Seattle University}
\email[S.\,Klee]{klees@seattleu.edu}
\author[Z.\,Zhang]{Zhenyang Zhang}
\address{Department of Mathematics, Univ. of California, Davis}
\email[Z.\,Zhang]{zhenyangz@math.ucdavis.edu}
\date{\today}
\thanks{ \textit{Keywords and phrases}. Oriented matroids, diameter of graphs, geometric complexity, Polynomial Hirsch conjecture, simplex method, pivoting algorithms.}
\begin{document}
    \maketitle

\begin{abstract}
Oriented matroids (often called order types) are combinatorial structures that generalize point configurations, vector configurations, hyperplane arrangements, polyhedra, linear programs, and directed graphs. Oriented matroids have played a key  role in combinatorics, computational geometry, and optimization. This paper surveys prior work and presents an update on the search for bounds on the diameter of the cocircuit graph of an oriented matroid. 

We review the diameter problem and show the diameter bounds of general oriented matroids reduce to those of uniform oriented matroids. We give the latest exact bounds for oriented matroids of low rank and low corank, and for all oriented matroids with up to nine elements (this part required a large computer-based proof). The motivation for our investigations is the complexity of the \emph{simplex method} and the \emph{criss-cross method}.  For arbitrary oriented matroids, we present an improvement to a quadratic bound of Finschi. Our discussion highlights two very important conjectures related to the polynomial Hirsch conjecture for polytope diameters. 
\end{abstract}
 
\section{Introduction}

Oriented matroids are combinatorial structures that generalize many types of objects, including point configurations, 
vector configurations, hyperplane arrangements, polyhedra, linear programs, and directed graphs. Oriented matroids have 
played a key role in combinatorics, geometry, and optimization (see Bj\"{o}rner et al. \cite{OMBook} and Ziegler \cite{Ziegler-book}). 
An important family, \emph{realizable} oriented matroids, are given by hyperplane arrangements. In this case, 
the cocircuit graph is just the one-skeleton of the cell complex obtained by intersecting a central hyperplane arrangement  with a unit sphere. Characterizations of cocircuit graphs have been explored in \cite{Felsner-et-al, MB-Strausz}.  In this article we are interested instead in bounding the diameter of the cocircuit graph of an oriented matroid.
The motivation for our investigations is the complexity of the \emph{simplex method} \cite{bertsimastsitsiklis, Schrijver1986}
and of the \emph{criss-cross method} \cite{fukudaterlaky1, fukudaterlaky2}.  Both algorithms are pivoting methods that jump 
from cocircuit to cocircuit, using edges of the cocircuit graph. The following conjecture is the oldest and the most ambitious challenge about 
the diameter today.

\begin{conjecture} \label{conj:mainproblem}
Let $\mathcal{M}$ be an oriented matroid of rank $r$ on $n$ elements, and let $G^*(\mathcal{M})$ be its cocircuit graph.  
Then  $\diam(G^*(\mathcal{M})) \leq n-r+2.$
\end{conjecture}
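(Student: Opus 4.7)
The plan is to attack the conjecture by induction on the corank $n-r$, using as input the reduction to uniform oriented matroids already established in the paper. In this setup, assume $\mathcal{M}$ is uniform of rank $r$ on $n$ elements, and invoke the Folkman--Lawrence topological representation to realize $\mathcal{M}$ as a pseudosphere arrangement on $S^{r-1}$. Cocircuits then correspond to the $0$-cells of the arrangement (sign vectors of support $n-r+1$), and edges of $G^*(\mathcal{M})$ correspond to $1$-cells, so $\diam(G^*(\mathcal{M}))$ is the combinatorial distance between vertices of the arrangement. Base cases of small corank (say $n-r \le 2$) should be handled by the exact bounds already obtained in the paper for low corank.

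For the inductive step, given two cocircuits $C, D$ of $\mathcal{M}$, the goal is to locate an element $e$ and a cocircuit $C'$ adjacent to $C$ such that a path from $C'$ to $D$ exists in the deletion $\mathcal{M}\setminus e$. The inductive bound for $\mathcal{M}\setminus e$ (assuming $e$ is not a coloop) is $(n-1)-r+2 = n-r+1$, which together with the single step $C \to C'$ yields the target $n-r+2$. A natural choice of $e$ is an element on which $C$ and $D$ already carry the same nonzero sign: intuitively such an element never has to be ``crossed'' along an optimal path. An alternative, when deletion fails, is to contract along an element $e$ in the common zero set of $C$ and $D$ (if nonempty), where the inductive bound for $\mathcal{M}/e$ remains $(n-1)-(r-1)+2=n-r+2$ but the projected cocircuits lie in a minor of smaller corank, so the induction hypothesis can be applied to a strictly simpler object.

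The main obstacle is twofold. First, cocircuits of a minor do not lift canonically to cocircuits of $\mathcal{M}$ with adjacency preserved, so translating a short path in $\mathcal{M}\setminus e$ or $\mathcal{M}/e$ back into $G^*(\mathcal{M})$ is the delicate step that typically loses exactly the factor one is trying to save; avoiding this loss presumably requires choosing $e$ very carefully relative to $C$ and $D$, perhaps by a pivoting rule in the spirit of the criss-cross method. Second and more fundamentally, this is the oriented matroid analog of the now-disproved Hirsch conjecture, and although antipodality of cocircuits and the different topology of pseudosphere arrangements leave the matroid version open, any successful argument will have to exploit a combinatorial feature specific to oriented matroids that has no polytopal counterpart. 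Identifying such a feature sharp enough to close the inductive step, rather than the mechanics of the induction itself, is where I expect the real difficulty to lie; absent it, one should expect to recover only the improved quadratic-type bound that the paper later obtains, not the linear bound conjectured here.
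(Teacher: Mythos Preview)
The statement you are attempting to prove is Conjecture~\ref{conj:mainproblem}, which the paper does \emph{not} prove: it is the central open problem the paper is surveying. There is therefore no ``paper's own proof'' to compare your proposal against; the paper establishes only partial results (low rank, low corank, $n\le 9$, and the quadratic bound of Theorem~\ref{thm:Finschi-improved}).

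Your proposal contains a concrete error in the contraction branch. If $e\in C^0\cap D^0$ and you form $\mathcal{M}/e$, then the rank drops by one and the ground set drops by one, so the corank is $(n-1)-(r-1)=n-r$, \emph{unchanged}. Contraction does not produce ``a minor of smaller corank,'' so your induction on corank cannot proceed along that branch. (The paper does use contraction by $X^0\cap Y^0$ in Theorem~\ref{corank-k-theorem}, but precisely to bound the \emph{rank} of the resulting minor for fixed corank, not to decrease corank.)

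The deletion branch does reduce corank, and you have correctly identified the obstruction: a path in $G^*(\mathcal{M}\setminus e)$ lifts to $G^*(\mathcal{M})$ only after subdividing each edge that $s_e$ crosses, and there is no mechanism in your outline preventing $s_e$ from crossing many edges of the path. Choosing $e$ with $C_e=D_e\neq 0$ does not help by itself: asking for a shortest path that stays on the $C_e$-side of $s_e$ is essentially the content of Conjecture~\ref{London-Paris-Conj}, which is itself open (and, as the paper shows, incompatible with Conjecture~\ref{conj:mainproblem}). Your own final paragraph is an accurate self-assessment: the scheme you describe, pushed through honestly, recovers a quadratic-type bound of the sort in Theorem~\ref{thm:Finschi-improved}, not the conjectured linear bound. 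As written, this is an outline of why the problem is hard rather than a proof.
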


Conjecture \ref{conj:mainproblem} bears a striking resemblance to the famous Hirsch conjecture for convex polytopes, which was disproved by Santos \cite{Santos}, and with good reason. Let $P \subseteq \bbR^d$ be a $d$-polytope defined by $n$ hyperplane inequalities. Lifting $P$ to $\bbR^{d+1}$ (and setting $r = d+1$) determines a central hyperplane arrangement in $\bbR^r$, one of whose cones is the nonnegative span of  $P$. Therefore, $P$ gives rise to an oriented matroid $\mcM$ whose cocircuit graph contains the graph of $P$ as an induced subgraph (see Figure \ref{fig:lifting}).

%\begin{center}
\begin{figure}[h]
\begin{minipage}{.31\textwidth}
\begin{tikzpicture}[scale=.4]
\draw[lightgray!50, fill=lightgray!50] (0,0) -- (0,1) -- (2,3) -- (7,1) -- (5,0) -- (0,0);
\draw[<->] (-1,0) -- (8,0);
\draw[<->] (0,-1) -- (0,5);
\draw[<->] (-1,0) -- (4,5);
\draw[<->] (3,-1) -- (8,1.5);
\draw[<->] (-1,21/5) -- (8.5,.4);
\draw[fill=black] (0,0) circle (.1);
\draw[fill=black] (0,1) circle (.1);
\draw[fill=black] (2,3) circle (.1);
\draw[fill=black] (7,1) circle (.1);
\draw[fill=black] (5,0) circle (.1);
\end{tikzpicture}
\end{minipage}
\begin{minipage}{.31\textwidth}
\begin{tikzpicture}[scale=.4]
\draw[lightgray!50, fill=lightgray!50] (0,5,0) -- (0,5,1) -- (2,5,3) -- (7,5,1) -- (5,5,0) -- (0,5,0);
\draw[<->] (-1,5,0) -- (8,5,0);
\draw[<->] (0,5,-1) -- (0,5,5);
\draw[<->] (-1,5,0) -- (4,5,5);
\draw[<->] (3,5,-1) -- (8,5,1.5);
\draw[<->] (-1,5,21/5) -- (8.5,5,.4);
\draw[fill=black] (0,5,0) circle (.1);
\draw[fill=black] (0,5,1) circle (.1);
\draw[fill=black] (2,5,3) circle (.1);
\draw[fill=black] (7,5,1) circle (.1);
\draw[fill=black] (5,5,0) circle (.1);

\draw[dotted] (0,0,0) -- (0,5,0);
\draw (0,0,0) -- (0,5,1);
\draw (0,0,0) -- (2,5,3);
\draw (0,0,0) -- (7,5,1);
\draw[dotted] (0,0,0) -- (5,5,0);

\end{tikzpicture}
\end{minipage}
\begin{minipage}{.33\textwidth}
\begin{tikzpicture}[scale=.4]

 \draw[blue, fill=gray!50, opacity=.5]
(-3.716,-0.669) -- %%% start of top edge
(-3.657,-0.682) -- 
(-3.597,-0.695) -- 
(-3.536,-0.707) -- 
(-3.473,-0.719) -- 
(-3.41,-0.731) -- 
(-3.346,-0.743) -- 
(-3.28,-0.755) -- 
(-3.214,-0.766) -- 
(-3.147,-0.777) -- 
(-3.078,-0.788) -- 
(-3.009,-0.799) -- 
(-2.939,-0.809) -- 
(-2.868,-0.819) -- 
(-2.796,-0.829) -- 
(-2.723,-0.839) -- 
(-2.65,-0.848) -- 
(-2.575,-0.857) -- 
(-2.5,-0.866) -- 
(-2.424,-0.875) -- 
(-2.347,-0.883) -- 
(-2.27,-0.891) -- 
(-2.192,-0.899) -- 
(-2.113,-0.906) -- 
(-2.034,-0.914) -- 
(-1.954,-0.921) -- 
(-1.873,-0.927) -- 
(-1.792,-0.934) -- 
(-1.71,-0.94) -- %%% end of top edge
(-1.712,-0.919)--
(-1.634,-0.954)--
(-1.557,-0.989)--
(-1.479,-1.023)--
(-1.4,-1.057)--
(-1.321,-1.091)--
(-1.242,-1.125)--
(-1.162,-1.158)--
(-1.082,-1.191)--
(-1.001,-1.223)--
(-0.921,-1.255)--
(-0.84,-1.287)--
(-0.758,-1.318)--
(-0.677,-1.349)--
(-0.595,-1.379)--
(-0.513,-1.41)--
(-0.431,-1.439)--
(-0.349,-1.468)--
(-0.266,-1.497)--
(-0.184,-1.525)--
(-0.102,-1.553)--
(-0.019,-1.581)--
(0.063,-1.607)--
(0.146,-1.634)--
(0.228,-1.66)--
(0.311,-1.685)--
(0.393,-1.71)--
(0.475,-1.734)-- %%% end of second edge, start of vertical edge
(0.473,-1.628)--
(0.47,-1.71)--
(0.467,-1.792)--
(0.464,-1.873)--
(0.46,-1.954)--
(0.457,-2.034)--
(0.453,-2.113)--
(0.449,-2.192)-- %%% end of vertical edge
(0.461,-2.167)--
(0.391,-2.216)--
(0.321,-2.264)--
(0.25,-2.312)--
(0.18,-2.359)--
(0.11,-2.405)--
(0.039,-2.45)--
(-0.031,-2.495)--
(-0.102,-2.539)--
(-0.172,-2.582)--
(-0.243,-2.624)--
(-0.313,-2.666)--
(-0.383,-2.707)--
(-0.454,-2.747)--
(-0.524,-2.786)--
(-0.593,-2.824)--
(-0.663,-2.862)--
(-0.733,-2.898)--
(-0.802,-2.934)-- % end of positive slope edge
(-0.804,-2.957)--
(-0.883,-2.921)--
(-0.961,-2.883)--
(-1.039,-2.845)--
(-1.117,-2.806)--
(-1.194,-2.766)--
(-1.271,-2.725)--
(-1.348,-2.684)--
(-1.424,-2.641)--
(-1.5,-2.598)--
(-1.575,-2.554)--
(-1.65,-2.509)--
(-1.725,-2.464)--
(-1.798,-2.417)--
(-1.872,-2.37)--
(-1.944,-2.323)--
(-2.017,-2.274)--
(-2.088,-2.225)--
(-2.159,-2.175)--
(-2.229,-2.124)--
(-2.299,-2.073)--
(-2.368,-2.022)--
(-2.436,-1.969)--
(-2.503,-1.916)--
(-2.57,-1.863)--
(-2.635,-1.808)--
(-2.7,-1.754)--
(-2.765,-1.698)--
(-2.828,-1.643)--
(-2.891,-1.586)--
(-2.952,-1.53)--
(-3.013,-1.472)--
(-3.073,-1.415)--
(-3.132,-1.357)--
(-3.189,-1.298)--
(-3.246,-1.239)--
(-3.302,-1.18)--
(-3.357,-1.12)--
(-3.411,-1.06)--
(-3.464,-1)--
(-3.516,-0.939)--
(-3.567,-0.878)--
(-3.616,-0.817)--
(-3.665,-0.756)--
(-3.712,-0.694);
\draw[black](0,0) circle [radius=5cm];
\greatcircle[blue] {0,0} {5cm}{1cm}{0} 
\greatcircle[blue] {0,0}{5cm}{.5cm}{90}
\greatcircle[blue] {0,0}{5cm}{3cm}{-30}
\greatcircle[blue] {0,0}{5cm}{2cm}{40}
\greatcircle[blue] {0,0}{5cm}{1.5cm}{-20}
 \draw[fill=black] (-3.716,-0.669) circle (.1);
 \draw[fill=black] (-1.71,-0.94) circle (.1);
 \draw[fill=black] (0.475,-1.734) circle (.1);
 \draw[fill=black] (0.449,-2.192) circle (.1);
 \draw[fill=black] (-0.802,-2.934) circle (.1);

 \draw[fill=black] (3.716,0.669) circle (.1);
 \draw[fill=black] (1.71,0.94) circle (.1);
 \draw[fill=black] (-0.475,1.734) circle (.1);
 \draw[fill=black] (-0.449,2.192) circle (.1);
 \draw[fill=black] (0.802,2.934) circle (.1);

 \draw[blue, fill=gray!50, opacity=.5, dashed]
(3.716,0.669) --  %%% start of top edge
(3.657,0.682) --  
(3.597,0.695) --  
(3.536,0.707) --  
(3.473,0.719) --  
(3.41,0.731) --  
(3.346,0.743) --  
(3.28,0.755) --  
(3.214,0.766) --  
(3.147,0.777) --  
(3.078,0.788) --  
(3.009,0.799) --  
(2.939,0.809) --  
(2.868,0.819) --  
(2.796,0.829) --  
(2.723,0.839) --  
(2.65,0.848) --  
(2.575,0.857) --  
(2.5,0.866) --  
(2.424,0.875) --  
(2.347,0.883) --  
(2.27,0.891) --  
(2.192,0.899) --  
(2.113,0.906) --  
(2.034,0.914) --  
(1.954,0.921) --  
(1.873,0.927) --  
(1.792,0.934) --  
(1.71,0.94) --  %%% end of top edge
(1.712,0.919) --
(1.634,0.954) --
(1.557,0.989) --
(1.479,1.023) --
(1.4,1.057) --
(1.321,1.091) --
(1.242,1.125) --
(1.162,1.158) --
(1.082,1.191) --
(1.001,1.223) --
(0.921,1.255) --
(0.84,1.287) --
(0.758,1.318) --
(0.677,1.349) --
(0.595,1.379) --
(0.513,1.41) --
(0.431,1.439) --
(0.349,1.468) --
(0.266,1.497) --
(0.184,1.525) --
(0.102,1.553) --
(0.019,1.581) --
(-0.063,1.607) --
(-0.146,1.634) --
(-0.228,1.66) --
(-0.311,1.685) --
(-0.393,1.71) --
(-0.475,1.734) -- %%% end of second edge, start of vertical edge
(-0.473,1.628) --
(-0.47,1.71) --
(-0.467,1.792) --
(-0.464,1.873) --
(-0.46,1.954) --
(-0.457,2.034) --
(-0.453,2.113) --
(-0.449,2.192) -- %%% end of vertical edge
(-0.461,2.167) --
(-0.391,2.216) --
(-0.321,2.264) --
(-0.25,2.312) --
(-0.18,2.359) --
(-0.11,2.405) --
(-0.039,2.45) --
(-0.031,2.495) --
(0.102,2.539) --
(0.172,2.582) --
(0.243,2.624) --
(0.313,2.666) --
(0.383,2.707) --
(0.454,2.747) --
(0.524,2.786) --
(0.593,2.824) --
(0.663,2.862) --
(0.733,2.898) --
(0.802,2.934) -- % end of positive slope edge
(0.804,2.957) --
(0.883,2.921) --
(0.961,2.883) --
(1.039,2.845) --
(1.117,2.806) --
(1.194,2.766) --
(1.271,2.725) --
(1.348,2.684) --
(1.424,2.641) --
(1.5,2.598) --
(1.575,2.554) --
(1.65,2.509) --
(1.725,2.464) --
(1.798,2.417) --
(1.872,2.37) --
(1.944,2.323) --
(2.017,2.274) --
(2.088,2.225) --
(2.159,2.175) --
(2.229,2.124) --
(2.299,2.073) --
(2.368,2.022) --
(2.436,1.969) --
(2.503,1.916) --
(2.57,1.863) --
(2.635,1.808) --
(2.7,1.754) --
(2.765,1.698) --
(2.828,1.643) --
(2.891,1.586) --
(2.952,1.53) --
(3.013,1.472) --
(3.073,1.415) --
(3.132,1.357) --
(3.189,1.298) --
(3.246,1.239) --
(3.302,1.18) --
(3.357,1.12) --
(3.411,1.06) --
(3.464,1) --
(3.516,0.939) --
(3.567,0.878) --
(3.616,0.817) --
(3.665,0.756) --
(3.712,0.694);
\end{tikzpicture}
\end{minipage}
\caption{A polytope in $\mathbb{R}^2$ (left), its lifting to $\mathbb{R}^3$ (center), and the intersection with the resulting hyperplane arrangement on $\mathbb{S}^2$ (right).}
\label{fig:lifting}
\end{figure}
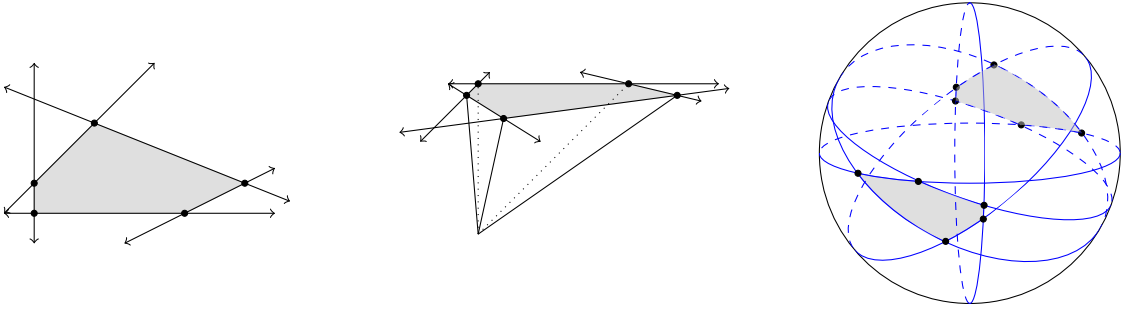
%\end{center}

Substituting $r=d+1$ in Conjecture \ref{conj:mainproblem} gives an upper bound of $n-r+2 = n-d+1$, which differs from the conjectured Hirsch bound by $1$.  The reason for this is that each signed cocircuit $X$ has an antipodal cocircuit $-X$.  We will see later that when $\mcM$ is uniform, the distance between antipodal cocircuits is exactly $n-r+2$. 

Conjecture \ref{conj:mainproblem} has appeared in the literature in several forms. Babson, Finschi, and Fukuda \cite[Lemma 6]{Babson-Finschi-Fukuda} established Conjecture \ref{conj:mainproblem} for uniform oriented matroids of rank $2$ and rank $3$, showing further that only antipodal cocircuits can have distance $n-r+2$.  Felsner et al. \cite[Lemma 4.1]{Felsner-et-al} also showed that the conjecture is true for uniform oriented matroids with rank at most 3 and stated again the famous Conjecture \ref{conj:mainproblem} in \cite[Question 4.2]{Felsner-et-al} with a strong emphasis on the important role of antipodal cocircuits. Finschi \cite[Open Problem 5]{Finschi-thesis} asked whether $\diam(G^*(\mcM)) \leq c \cdot n$ for some constant $c$ that is independent of $n$ and $r$.  Fukuda (personal communication) noted to us that Conjecture \ref{conj:mainproblem} is a folklore problem that predates all these papers.

Aside from the results of Babson, Finschi, and Fukuda in low rank, the most general progress that has been made on Conjecture \ref{conj:mainproblem} seems to come from Finschi's thesis.

\begin{theorem}{\rm{(Finschi \cite[Proposition 2.6.1]{Finschi-thesis})}} \label{thm:Finschi-bound}

\noindent Let $\mcM$ be a uniform oriented matroid of rank $r$ on $n$ elements. Then
$$
\diam(G^*(\mcM)) \leq n-r+2 + \sum_{k=1}^{\min(r-2,n-r)}\left(\left\lfloor \frac{n-r-k}{2}\right\rfloor+1\right).
$$
\end{theorem}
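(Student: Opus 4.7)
The natural strategy is induction on the rank $r$, with contraction as the main tool. The base case $r=2$ is direct: for a uniform rank-$2$ OM on $n$ elements the cocircuit graph is a $2n$-cycle, so its diameter equals $n = n-r+2$, matching the stated bound (the sum is empty).

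For the inductive step, recall that for any $e \in E$ the contraction $\mcM/e$ is a uniform OM of rank $r-1$ on $n-1$ elements whose cocircuits are precisely the cocircuits $X$ of $\mcM$ with $e \in Z(X)$; two such cocircuits are adjacent in $G^*(\mcM/e)$ iff they are adjacent in $G^*(\mcM)$, so $G^*(\mcM/e)$ embeds as an induced subgraph of $G^*(\mcM)$. Fix cocircuits $X, Y$ of $\mcM$ and let $k := |Z(X) \cap Z(Y)|$. If $k \ge 1$, pick any $e \in Z(X) \cap Z(Y)$ and apply the inductive hypothesis inside $\mcM/e$:
\[
\dist(X, Y) \;\le\; (n-1) - (r-1) + 2 \;+\; \sum_{j=1}^{\min(r-3,\,n-r)}\left(\left\lfloor \tfrac{n-r-j}{2}\right\rfloor + 1\right),
\]
which is bounded above by the target, since the two expressions differ by at most the single nonnegative summand indexed by $j = r-2$.

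If $k = 0$, then $|Z(X)| + |Z(Y)| = 2(r-1) \le n$, so we are automatically in the regime $n \ge 2r-2$ where $\min(r-2, n-r) = r-2$. The core claim in this case is that there is a walk in $G^*(\mcM)$ of length at most $\lfloor (n-2r+2)/2 \rfloor + 1$ from $X$ to some cocircuit $\tilde X$ with $Z(\tilde X) \cap Z(Y) \ne \emptyset$. Concatenating this walk with the $k \ge 1$ bound applied to $\tilde X$ and $Y$ adds exactly the missing $j = r-2$ summand to the right-hand side, finishing the induction.

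Proving this walk-length claim is the main obstacle. Because $\mcM$ is uniform, every $(r-1)$-subset is the zero set of exactly two antipodal cocircuits, so the only obstruction to a one-step swap from $X$ to a cocircuit whose zero set contains a chosen element $g \in Z(Y)$ is the sign-agreement condition on the common support (of size $n - r$). When this condition fails, a two-step detour that routes through an auxiliary element of $E \setminus (Z(X) \cup Z(Y))$ flips the offending parity and restores agreement. A pairing argument on this ``unused'' set, of cardinality $n - 2r + 2$, caps the number of detours by $\lfloor (n-2r+2)/2 \rfloor$, and the trailing $+1$ accounts for the successful final swap. Making this pairing rigorous — in particular, ruling out cascades of obstructions longer than the pairing can absorb, and ensuring that each detour genuinely lives in $G^*(\mcM)$ — is the technical heart of the argument, and it is precisely what forces the specific shape $\lfloor (n-r-j)/2 \rfloor + 1$ of each summand in the stated bound.
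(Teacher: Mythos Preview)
Your inductive framework—induction on the rank $r$ via contraction, splitting into the cases $|X^0 \cap Y^0| \ge 1$ and $|X^0 \cap Y^0| = 0$—is a valid reorganization, and the arithmetic you set up does reproduce the stated bound. The paper (in proving the sharpened Theorem~\ref{thm:Finschi-improved}, which is Finschi's argument with an improved base case) inducts instead on $\ell := |X^0 \setminus Y^0|$, but the two schemes unwind to the same core step: produce a short walk from $Y$ to a cocircuit whose zero set meets $X^0$ in one more element, at cost at most $\lfloor (n-r-\ell+1)/2 \rfloor + 1$. In your $k=0$ case one has $\ell = r-1$, which gives exactly your $\lfloor (n-2r+2)/2\rfloor + 1$.

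The genuine gap is in your justification of that walk-length claim. Your sketch—attempt a one-step swap inserting a chosen $g \in Y^0$, fall back to a two-step detour through an auxiliary element of $E \setminus (X^0 \cup Y^0)$ when ``sign-agreement fails,'' and bound the number of detours by pairing up that auxiliary set—does not describe a mechanism that clearly works. In $G^*(\mcM)$ you cannot choose which element enters the zero set at a given step: from $X$ along the coline with zero part $X^0 \setminus \{f\}$ you have exactly two neighbors, and neither need involve your target $g$. There is no visible reason a single detour ``flips the offending parity,'' nor why cascades of obstructions are capped at $\lfloor (n-2r+2)/2 \rfloor$; you yourself flag this as the unresolved technical heart.

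The paper's mechanism is different and clean. Fix $e \in Y^0 \setminus X^0$ and walk along the single coline $U$ with $U^0 = Y^0 \setminus \{e\}$, a cycle on $2(n-r+2)$ cocircuits. On $U$ sit $\pm Y$, the $\ell$ antipodal pairs $\pm Z^f$ with $(Z^f)^0 = U^0 \cup \{f\}$ for $f \in X^0 \setminus U^0$ (each satisfying $|X^0 \setminus (Z^f)^0| = \ell - 1$), and $n - r - \ell + 1$ remaining antipodal pairs. The arc from the nearest $Z^f$ through $Y$ to the nearest $-Z^{f'}$ spans less than half the cycle, hence contains at most one member of each remaining pair; pigeonhole then places $Y$ within $\lfloor (n-r-\ell+1)/2\rfloor + 1$ of some $\pm Z^f$. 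This coline-and-pigeonhole step is what should replace your detour heuristic; once you have it, either inductive scheme (yours on $r$, or the paper's on $\ell$) finishes the proof.
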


The bound in Theorem \ref{thm:Finschi-bound} is tight when $r=2$ or $r=n$, but in general it is not. 

%If $P \subseteq \RR^d$ is a polytope with $n$ facets, we can lift $P$ into the $x_{d+1}=1$ coordinate hyperplane of $\RR^{d+1}$ so that its supporting (affine) hyperplanes become central hyperplanes. Intersecting this hyperplane arrangement with $\bbS^d$ gives an oriented matroid (or pseudosphere arrangement) in which one of the cells is isomorphic $P$ itself. 
\subsection{Notation and Definitions}\hfill

We use standard notation about oriented matroids from Ziegler \cite{Ziegler-book} and the classic book of Bj\"{o}rner et al. \cite{OMBook}.  
The geometric intuition that accompanies these definitions will be discussed at greater length in Section \ref{section:overview}, but
for now we introduce the minimal notation and definitions so that we can state our results formally.

A purely combinatorial description of oriented matroids can be given in terms of special \emph{sign vectors}.
If $E$ is a finite set, we use $\{+,-,0\}^E$ to denote the set of all vectors of signs, with entries indexed by the elements of $E$. We will use capital letters $X,Y,Z,\ldots$ to represent elements of $\{+,-,0\}^E$ and subscripts $X_e$ to reference the entry of $X$ indexed by the element $e \in E$.  We can always negate a sign vector: if $X = (X_e: e \in E)$, then $-X = (-X_e: e \in E)$.  

The \textit{positive}, \textit{negative}, and \textit{zero} parts of a sign vector $X \in \{+,-,0\}^E$ are defined respectively as $X^+ = \{e \in E : X_e = +\}$, $X^- = \{e \in E : X_e = -\}$, and $X^0 = \{e \in E : X_e = 0\}$.  The \textit{support} of $X$ is defined as $\supp(X) = X^+ \cup X^-$. If $X$ and $Y$ are sign vectors, their \textit{separating set} is $S(X,Y) = (X^+ \cap Y^-) \cup (X^- \cap Y^+)$, and their \textit{composition} is the sign vector $X \circ Y$ whose entries are given by 
$$
(X \circ Y)_e = \begin{cases}
X_e & \text{ if } X_e \neq 0, \\
Y_e & \text{ otherwise.}
\end{cases}
$$

For the moment we will only provide the \emph{cocircuit axioms} of oriented matroids. As with classical matroids, there are also several cryptomorphic  definitions of oriented matroids; see \cite{OMBook} for more details. We will briefly introduce some of these details later. The cocircuits and covectors of an oriented matroid are special types of sign vectors that satisfy certain axioms:

\begin{definition}\label{OM_axioms}
An oriented matroid $\mcM = (E,\mcC^*)$ consists of a finite set $E$ and a subset $\mcC^* \subseteq \{+,-,0\}^E$, called signed cocircuits, that satisfy the following conditions. 
\begin{enumerate}
\item[{\rm (CC0)}] $\mathbf{0} \notin \mathcal{C}^*$;
\item[{\rm (CC1)}] if $X \in \mathcal{C}^*$, then $-X \in \mathcal{C}^*$;
\item[{\rm (CC2)}] for all $X,Y \in \mathcal{C}^*$, if $\supp(X) \subseteq \supp(Y)$, then $X = Y$ or $X = -Y$; and
\item[{\rm (CC3)}] if $X,Y \in \mathcal{C}^*$, $X \neq -Y$, and $e \in S(X,Y)$, then there exists $Z \in \mathcal{C}^*$ such that $Z^+ \subseteq (X^+ \cup Y^+) \setminus \{e\}$ and $Z^- \subseteq (X^- \cup Y^-) \setminus \{e\}$.
\end{enumerate}
\end{definition}

Given an oriented matroid $\mcM$, we can consider the set $\mcL = \{X^0 \circ X^1 \circ \cdots \circ X^k\ : \ X^i \in \mcC^*(\mcM)\}$ of all possible signed \emph{covectors}, obtained by successively composing signed cocircuits.  The set $\mcL$ has a natural poset structure, which we denote by $\Gamma(\mcL)$ (in fact, $\Gamma(\mcL)$ is a graded lattice). The order is obtained from the component-wise partial order on vectors in $\{+,-,0\}^E$ with $0 < +,-$. We will revisit this poset later  in a geometric setting. 

The \textit{rank} of $\mcM$ is defined to be one less than the length of the longest chain of elements in the poset $\Gamma(\mcL)$. Again, this is not the only way to define the rank. We say an element of $E$ is a \emph{coloop} if it is not present in the support of any signed cocircuit. For brevity, signed cocircuits will also be called cocircuits. It is well known that every matroid has a \emph{dual matroid}. In the case of oriented matroids, this concept is more delicate, but there is also a notion of duality. One can then talk about \emph{circuits}, which are the cocircuits of the dual oriented matroid, and the related notions of \emph{corank}, \emph{loops}, etc. The corank of an oriented matroid on $n$ elements of rank $r$ is $n-r$.

The \textit{cocircuit graph} of an oriented matroid $\mcM$ of rank $r$ is the graph $G^*(\mcM)$ whose vertices are the signed cocircuits of $\mcM$, with an edge connecting signed cocircuits $X$ and $Y$ if $|X^0 \cap Y^0| \geq r-2$ and $S(X, Y) = \emptyset$.  An oriented matroid is \textit{uniform} if $|X^0| = r-1$ for every cocircuit $X \in \mathcal{C}^*$. If $X$ and $Y$ are signed cocircuits in $\mcM$, we use $d_{\mcM}(X,Y)$ to denote the distance from $X$ to $Y$ in $G^*(\mcM)$; that is, the length of the shortest path from $X$ to $Y$ in $G^*(\mcM)$. We call a path $P$ from $X$ to $Y$ \emph{crabbed} (first introduced in \cite{MB-Strausz}), if for every cocircuit $W \in P$, $W^+ \subseteq X^+ \cup Y^+$ and $W^- \subseteq X^- \cup Y^-$. The \textit{diameter} of $G^*(\mcM)$ is defined as $\diam(G^*(\mcM)) = \max\{d_{\mcM}(X,Y) : X,Y \in \mcC^*(\mcM)\}$.

\subsection{Our Results}\hfill

We begin with an overview of our results.  

One of the first reductions made in studying the Hirsch conjecture was given by Klee and Walkup \cite{Klee-Walkup}, who showed it was sufficient to study \emph{simple} polytopes.  These are $d$-polytopes in which each vertex is supported by exactly $d$ facets.  
We make a similar reduction  from arbitrary to \textit{uniform} oriented matroids.

\begin{lemma}\label{perturbation-lemma}
Let $\mathcal{M}$ be an oriented matroid of rank $r$ on $n$ elements.  Then there exists a uniform oriented matroid 
$\mathcal{M}'$ of rank $r$ on $n$ elements such that 
$$
\diam(G^*(\mathcal{M})) \leq \diam(G^*(\mathcal{M}')).
$$
Moreover, when $\mcM$ is realizable, then $\mcM'$ can be taken to be realizable as well. 
\end{lemma}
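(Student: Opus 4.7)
The plan is to construct $\mathcal{M}'$ by a small generic perturbation of $\mathcal{M}$ and then exhibit a projection $\pi$ from cocircuits of $\mathcal{M}'$ back to cocircuits of $\mathcal{M}$ such that every edge of $G^*(\mathcal{M}')$ either collapses to a vertex or maps to an edge of $G^*(\mathcal{M})$. With such a $\pi$ in hand, paths in $G^*(\mathcal{M}')$ project to paths of no greater length in $G^*(\mathcal{M})$, and picking preimages of any diameter-realizing pair yields the desired inequality $\diam(G^*(\mathcal{M})) \leq \diam(G^*(\mathcal{M}'))$.

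To construct $\mathcal{M}'$, I would appeal to the Folkman--Lawrence topological representation theorem to realize $\mathcal{M}$ as a signed arrangement of oriented pseudospheres $\{S_e\}_{e \in E}$ on $\mathbb{S}^{r-1}$, and then wiggle each $S_e$ slightly to $S_e^{\varepsilon}$ so that the perturbed arrangement is \emph{simple}: any $r-1$ of the $S_e^{\varepsilon}$ meet in exactly two antipodal points and no $r$ share a point. This yields a uniform rank-$r$ oriented matroid $\mathcal{M}'$ on the same ground set $E$. When $\mathcal{M}$ is realizable the same perturbation can be performed directly on the normal vectors of the hyperplanes, so $\mathcal{M}'$ remains realizable.

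To define $\pi$, observe that each cocircuit $Z \in \mathcal{C}^*(\mathcal{M}')$ corresponds to a $0$-cell $p_\varepsilon$ of the perturbed arrangement, and for sufficiently small perturbation $p_\varepsilon$ lies in a tiny neighborhood of a unique $0$-cell $p$ of the original arrangement. Set $\pi(Z)$ to be the cocircuit of $\mathcal{M}$ corresponding to $p$. Two properties fall out: (i) $\pi(Z)^0 \supseteq Z^0$, since the $r-1$ pseudospheres $S_e^{\varepsilon}$ through $p_\varepsilon$ still pass through $p$ in the $\varepsilon \to 0$ limit; and (ii) $\pi(Z)_e = Z_e$ for all $e \in \supp(\pi(Z))$, since the signed side of $p$ with respect to such an $S_e$ is preserved by a small perturbation. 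Moreover $\pi$ is surjective, because each $0$-cell of the unperturbed arrangement splits into a nonempty cluster of $0$-cells of the perturbed arrangement. Now let $X, Y \in \mathcal{C}^*(\mathcal{M})$ achieve $\diam(G^*(\mathcal{M}))$, pick preimages $X^*, Y^* \in \pi^{-1}(X) \times \pi^{-1}(Y)$, and take any path $X^* = Z_0, Z_1, \ldots, Z_\ell = Y^*$ in $G^*(\mathcal{M}')$. Setting $W_i = \pi(Z_i)$, property (i) gives $|W_i^0 \cap W_{i+1}^0| \geq |Z_i^0 \cap Z_{i+1}^0| \geq r-2$, and property (ii) gives $S(W_i, W_{i+1}) \subseteq S(Z_i, Z_{i+1}) = \emptyset$. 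Hence consecutive $W_i$ are either equal or adjacent in $G^*(\mathcal{M})$, so $d_{\mathcal{M}}(X,Y) \leq \ell$.

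The main obstacle is the careful handling of the perturbation in the non-realizable regime: one must know that a genuinely generic small perturbation of the pseudosphere arrangement exists, preserves the sign data on the strict sides of every $0$-cell, and makes $\pi$ unambiguous on every cocircuit. A safe fallback, if a simultaneous generic perturbation is uncomfortable, is to perturb one element at a time using lexicographic single-element extensions (or single-element reorientations) and induct on the number of non-generic incidences, at each step verifying the same two projection properties. A minor secondary point is to confirm that coloops, if any, behave properly under the perturbation, which they do since they contribute trivially to every cocircuit.
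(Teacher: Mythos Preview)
Your overall strategy---perturb to a uniform $\mathcal{M}'$, then project paths back via a map $\pi$ that sends each cocircuit of $\mathcal{M}'$ to the cocircuit of $\mathcal{M}$ it ``came from'' and check that edges collapse or survive---is exactly the paper's strategy, and your properties (i) and (ii) for $\pi$ are the right ones.

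Where you and the paper diverge is in \emph{how} the perturbation is built. You lead with a simultaneous topological wiggle of the pseudospheres and then honestly flag the gap: for non-realizable oriented matroids there is no ready-made theory of ``small generic perturbations'' of a pseudosphere arrangement, so the existence of $\mathcal{M}'$ and the well-definedness of $\pi$ are not justified by that picture alone. The paper does not attempt this. Instead it uses precisely your ``safe fallback'': it perturbs one incidence at a time, purely combinatorially, via the single-element-extension/localization formalism. Concretely, it picks a cocircuit $W$ with $|W^0|>r-1$ and an element $f\in W^0$, views $\mathcal{M}$ as an extension of $\mathcal{M}\setminus f$ with localization $\sigma_f$, and flips $\sigma_f$ only at $\pm\overline{W}$ (this is the \emph{local perturbation} of \cite[Theorem~7.3.1]{OMBook}). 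The effect is that the old edges $\{W,X^i\}$ through $W$ get subdivided by new cocircuits $Z^i$, and the projection $\pi$ simply sends $\pm W'$ and $\pm Z^i$ back to $\pm W$; your adjacency check then goes through verbatim. Iterating reduces the number of degenerate incidences until $\mathcal{M}'$ is uniform. For the realizable clause the paper gives an explicit tilt of one hyperplane rather than appealing to genericity.

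So: your proposal is correct once you commit to the fallback, and that fallback \emph{is} the paper's proof. If you want to keep the geometric narrative, you should replace the informal ``wiggle'' by the combinatorial local-perturbation construction and cite \cite[Theorem~7.3.1]{OMBook} for the fact that the modified $\sigma$ is again a localization.
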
 

Therefore, for the purposes of studying Conjecture \ref{conj:mainproblem}, it suffices to consider only uniform oriented matroids.

The following lemma is small but powerful, because it shows the discrepancy between the diameter given in Conjecture \ref{conj:mainproblem} and the classical Hirsch bound cannot be improved.

\begin{lemma}\label{lem:distance-lower-bound}
Let $\mathcal{M}$ be a uniform oriented matroid of rank $r$ on $n$ elements, and let $X,Y \in \mathcal{C}^*(\mathcal{M})$.  Then
\begin{equation} \label{eq:distbound}
d_{\mcM}(X,Y) \geq 
\begin{cases}
|S(X,Y)| + |X^0 \setminus Y^0| & \text{ if } X \neq -Y,\\
n-r+2 & \text{ if } X = -Y.
\end{cases}
\end{equation}
Moreover, if $|X^0 \setminus Y^0| \leq1$, then the  inequality  \eqref{eq:distbound} holds with equality: $d_{\mcM}(X,Y) = 1+|S(X,Y)|$,
and in particular, when $X=-Y$, then %$|S(X,Y)| = n-r+1$ so 
$d_{\mcM}(X,Y) = n-r+2$. 
\end{lemma}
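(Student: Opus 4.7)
The plan is to introduce a single monovariant on cocircuits that bounds distance from $X$, read off both the generic lower bound and the extra $+1$ in the antipodal case from how it can change along an edge, and then handle the equality statement by induction on $|S(X,Y)|$ using the cocircuit elimination axiom (CC3).

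\emph{Structure of edges and a potential.} Because $\mcM$ is uniform, $|W^0|=r-1$ for every cocircuit $W$. If $W\sim W'$ is an edge of $G^*(\mcM)$, then $|W^0\cap W'^0|\ge r-2$; the case $W^0=W'^0$ would force $\supp(W)=\supp(W')$ and hence $W=\pm W'$ by (CC2), contradicting $S(W,W')=\emptyset$. So $|W^0\cap W'^0|=r-2$, there are unique indices $e\in\supp(W')\setminus\supp(W)$ and $f\in\supp(W)\setminus\supp(W')$, and $W,W'$ agree on their common support. Fixing $X$, I would define
\[
\phi(W)\ :=\ \bigl|\{g\in\supp(W):X_g\neq W_g\}\bigr|\ =\ |S(X,W)|+|X^0\setminus W^0|,
\]
the second equality coming from splitting the count according to whether $X_g=0$. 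Since $W$ and $W'$ coincide on their common support, only the indices $e,f$ contribute to the change, giving
\[
\phi(W')-\phi(W)\ =\ \mathbf{1}[X_e\neq W'_e]\ -\ \mathbf{1}[X_f\neq W_f]\ \in\ \{-1,0,1\}.
\]

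\emph{Lower bounds.} For any path $X=W_0,W_1,\ldots,W_k=Y$, $\phi(W_0)=0$ and $\phi(W_k)=|S(X,Y)|+|X^0\setminus Y^0|$, so $k\ge\phi(W_k)$; this proves the first case of \eqref{eq:distbound}. For $X=-Y$, inspecting the last edge shows $e\in\supp(Y)=\supp(X)$ with $W_{k,e}=Y_e=-X_e$, so $\mathbf{1}[X_e\neq W_{k,e}]=1$; and $f\notin\supp(Y)=\supp(X)$, so $X_f=0\neq W_{k-1,f}$, giving $\mathbf{1}[X_f\neq W_{k-1,f}]=1$. Hence the final step contributes $0$ to $\phi$, forcing $\phi(W_{k-1})=\phi(Y)=n-r+1$, and since the first $k-1$ edges can add at most $1$ each, $k-1\ge n-r+1$, i.e.\ $k\ge n-r+2$.

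\emph{Equality.} Assume $|X^0\setminus Y^0|\le 1$ and $X\neq Y$. For $|X^0\setminus Y^0|=1$ (which forces $X\neq-Y$) I would induct on $|S(X,Y)|$: the base case $|S(X,Y)|=0$ yields $X\sim Y$ directly from the definition of $G^*(\mcM)$, so $d_{\mcM}(X,Y)=1$. For the inductive step, pick $e\in S(X,Y)$ and apply (CC3) to $X,Y$ at $e$ to obtain a cocircuit $Z$ with $Z^+\subseteq(X^+\cup Y^+)\setminus\{e\}$ and $Z^-\subseteq(X^-\cup Y^-)\setminus\{e\}$; using uniformity and $|X^0\setminus Y^0|\le 1$ one argues that some such $Z$ can be chosen adjacent to $X$ with $|S(Z,Y)|=|S(X,Y)|-1$ and $|Z^0\setminus Y^0|\le 1$, whereupon the induction hypothesis applied to $(Z,Y)$ supplies a path of length $|S(Z,Y)|+1$. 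For the antipodal case $X=-Y$, take any neighbor $W_1$ of $X$; then $|W_1^0\setminus Y^0|=|W_1^0\setminus X^0|=1$, and since $W_1$ agrees with $X$ on the $n-r$ common support indices, a direct count gives $|S(W_1,Y)|=n-r$. Applying the non-antipodal equality to $(W_1,Y)$ yields $d_{\mcM}(W_1,Y)=n-r+1$, hence $d_{\mcM}(X,Y)\le 1+(n-r+1)=n-r+2$, matching the lower bound.

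The main obstacle is the inductive construction above: (CC3) guarantees \emph{some} cocircuit $Z$ that eliminates $e$, but extra care using uniformity and the structure of $Z^0$ is required to ensure that $Z$ is actually an edge-neighbor of $X$ in $G^*(\mcM)$ (that is, $|Z^0\cap X^0|=r-2$ and $S(Z,X)=\emptyset$) rather than merely a cocircuit produced by elimination.
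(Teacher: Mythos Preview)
Your lower-bound argument via the potential $\phi$ is correct and is essentially a repackaging of the paper's ``basic transformations'' count: the paper observes that each edge accounts for exactly two sign changes (one $0\to\pm$ and one $\pm\to 0$), tallies that at least $2|S(X,Y)|+2|X^0\setminus Y^0|$ such changes are needed, and divides by two. Your $\phi$ is precisely half of that running tally, and your treatment of the antipodal case (the last edge contributes $0$ to $\phi$) is a pleasant variant of the paper's trick of stepping to a neighbor $Z$ of $X$ and applying the non-antipodal bound to $(Z,Y)$.

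The genuine gap is exactly the one you flag in the equality argument. Axiom (CC3) applied to $X,Y$ at $e\in S(X,Y)$ does pin down $Z^0=(X^0\cap Y^0)\cup\{e\}$ by uniformity, so $|Z^0\cap X^0|=r-2$; but it does \emph{not} force $S(Z,X)=\emptyset$, since at any $g\in S(X,Y)\setminus\{e\}$ the eliminated cocircuit may take the sign of $Y$ rather than of $X$. So ``some such $Z$ can be chosen adjacent to $X$'' is not justified by (CC3) alone, and iterating (CC3) does not obviously converge to such a $Z$ either. The paper sidesteps this entirely: it takes $A\subseteq X^0\cap Y^0$ with $|A|=r-2$, notes that $S_A\cong\mathbb{S}^1$ is a coline in the Folkman--Lawrence representation containing both $X$ and $Y$, and observes that the internal cocircuits on the shorter arc from $X$ to $Y$ are in bijection with $S(X,Y)$ (each corresponds to the pseudosphere separating $X$ from $Y$). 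This immediately gives a path of length $1+|S(X,Y)|$ in $G^*(\mcM)$ with no need to control which cocircuit (CC3) produces. If you want a purely axiomatic fix, you would need something stronger than the plain elimination axiom---effectively the coline structure itself---to guarantee an adjacent $Z$ at each step.
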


One could hope that  $d_{\mcM}(X,Y) \leq n-r+1$ when $X$ and $Y$ are not antipodal cocircuits.  Finschi posed a similar question in his thesis \cite[Open Problem 2]{Finschi-thesis}, as did Felsner et al. \cite[Question 4.2]{Felsner-et-al}.
However, one can show this is false by considering Santos's counterexample to the Hirsch conjecture, as we will discuss in Section 3.

\begin{proposition}\label{santos-diameter}
There is a uniform oriented matroid $\mcM$ of rank 21 on 40 elements that has a pair of non-antipodal cocircuits $X$ and $Y$ such that $d_{\mcM}(X,Y) \geq 21 = n-r+2.$
\end{proposition}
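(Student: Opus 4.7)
The plan is to build $\mcM$ from a counterexample to the Hirsch conjecture. Take the construction of Matschke, Santos, and Weibel: a simple $20$-dimensional polytope $P \subset \bbR^{20}$ with $40$ facets whose graph has diameter at least $21$. Lifting $P$ to the central arrangement of $40$ hyperplanes in $\bbR^{21}$ as described after Conjecture~\ref{conj:mainproblem} produces a realizable oriented matroid $\mcM_0$ of rank $21$ on $40$ elements whose cocircuit graph contains the graph of $P$ as an induced subgraph. With the sign convention under which the cone over $P$ corresponds to cocircuits with entries in $\{-,0\}$, each vertex cocircuit $X_v$ of $P$ has $X_v^+ = \emptyset$ and $|X_v^0| = 20 = r - 1$.

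Next, apply Lemma~\ref{perturbation-lemma} to pass from $\mcM_0$ to a uniform realizable oriented matroid $\mcM$ of rank $21$ on $40$ elements. Because $P$ is simple, a sufficiently small perturbation of its defining hyperplanes (which realizes the lemma's conclusion in the realizable case) preserves the combinatorial structure of $P$, so the perturbed polytope $P'$ is combinatorially equivalent to $P$ and its graph still embeds in $G^*(\mcM)$ as an induced subgraph. Let $v, w$ be two vertices of $P'$ at polytope graph distance at least $21$, and let $X, Y \in \mcC^*(\mcM)$ be the corresponding vertex cocircuits. Since polytope distance $\geq 21 > 20$, the vertices $v$ and $w$ share no facet, so $X^0 \cap Y^0 = \emptyset$; in particular $X^0 \neq Y^0$ and therefore $X \neq -Y$.

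It remains to prove $d_{\mcM}(X, Y) \geq 21$. A direct use of Lemma~\ref{lem:distance-lower-bound} only yields $d_{\mcM}(X, Y) \geq |X^0 \setminus Y^0| + |S(X, Y)| = 20 + 0 = 20$, which is one short of the target. To close the gap, I would argue that some shortest $X$--$Y$ path in $G^*(\mcM)$ can be chosen to be crabbed. Every cocircuit $W$ on a crabbed $X$--$Y$ path satisfies $W^+ \subseteq X^+ \cup Y^+ = \emptyset$, so $W$ lies inside the cone over $P'$. In a uniform realizable arrangement the cocircuits inside this cone are exactly the vertex cocircuits of $P'$ (a generic $r{-}1$-subset of facet hyperplanes intersects in a $1$-dimensional subspace, which lies in the cone iff the corresponding $d$-subset of facets shares a vertex of the simple polytope). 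Hence a crabbed path projects to a walk in the graph of $P'$ from $v$ to $w$ of the same length, and this length is at least the polytope distance $\geq 21$.

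The main obstacle is the crabbing reduction: I must show that any shortest cocircuit path can be rerouted to avoid cocircuits with a $+$ entry (i.e., cocircuits outside the cone over $P'$) without increasing its length. This is a pivot-exchange argument in the spirit of the criss-cross method and of the crabbed-path techniques of Montellano--Ballesteros and Strausz cited in the introduction. If this reduction proves problematic in full generality, a backup strategy is to exploit the explicit prismatoid construction underlying the Matschke--Santos--Weibel polytope to isolate a particular pair of vertex cocircuits for which the cocircuit distance can be verified to reach $21$ directly, by analyzing the sign vectors of the cocircuits in the cone.
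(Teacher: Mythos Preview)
Your construction of $\mcM$ is correct, though the appeal to Lemma~\ref{perturbation-lemma} is unnecessary: because $P$ is simple, every vertex of $P$ lies on exactly $20$ facets, so the lifted oriented matroid is already uniform. The paper uses this directly.

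The genuine gap is the crabbing reduction. What you are asking for---that some shortest $X$--$Y$ path in $G^*(\mcM)$ can be chosen crabbed---is exactly the content of Conjecture~\ref{Ilan-stronger-Conj} at the end of the paper, which is stated there as open. The existence of \emph{some} crabbed path (Felsner et al.) says nothing about its length, and your ``pivot-exchange'' sketch is not a proof. So the proposal, as it stands, rests on an unproved conjecture.

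The paper closes the gap by a different and much simpler idea: rather than rerouting the path into the tope, it shows directly that any path leaving the tope is already long enough. After reorienting so that the tope over $P$ is the all-$+$ covector, one has $X^0 \cup Y^0 = E$ and $X^+ \cup Y^+ = E$ (since $X^0 \cap Y^0 = \emptyset$). Take any shortest path $\gamma$ from $X$ to $Y$. If every cocircuit on $\gamma$ lies in the tope, then $\gamma$ is a walk in the graph of $P$ and has length at least $21$. Otherwise some $Z$ on $\gamma$ has $Z_i = -$ for some $i$. Now refine the basic-transformation count from Lemma~\ref{lem:distance-lower-bound}: passing from $X_i \in \{0,+\}$ down to $Z_i = -$ and back up to $Y_i \in \{0,+\}$ costs at least three basic transformations at coordinate $i$ instead of one, while every other coordinate still costs at least one. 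That gives at least $39 + 3 = 42$ basic transformations along $\gamma$, hence length at least $21$. No crabbing is needed.
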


Next, we turn our attention to small oriented matroids, for which $n, r$ or $n - r$ are small.

\begin{theorem}\label{thm:lowrankcorank}
Let $\mathcal{M}$ be a uniform oriented matroid of rank $r$ on $n$ elements.
\begin{enumerate}
\item[a.]  If $n \leq 9$, then $\diam(G^*(\mathcal{M})) = n-r+2.$
\item[b.]  If $r \leq 3$, then $\diam(G^*(\mathcal{M})) = n-r+2.$
\item[c.] If $n-r \leq 4$, then $\diam(G^*(\mathcal{M})) = n-r+2.$
\end{enumerate}
  \end{theorem}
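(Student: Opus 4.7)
The plan is to handle the three parts separately, in each case using Lemma \ref{perturbation-lemma} to reduce to uniform $\mcM$ and Lemma \ref{lem:distance-lower-bound}, applied to any antipodal pair $(X,-X)$, to obtain the matching lower bound $\diam(G^*(\mcM)) \geq n-r+2$ for free. It therefore suffices to establish the upper bound $\diam(G^*(\mcM)) \leq n-r+2$ in each of the three regimes.

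\textbf{Part (a).} I would proceed by computer-assisted enumeration. Finschi and Fukuda have catalogued all uniform oriented matroids on $n \leq 9$ elements up to reorientation and isomorphism. For each representative $\mcM$ in the catalog, I would construct $G^*(\mcM)$ directly from the adjacency rule ($|X^0 \cap Y^0| \geq r-2$ and $S(X,Y) = \emptyset$), run a breadth-first search from every cocircuit to obtain its eccentricity, and record the maximum. Because reorientation acts as a vertex relabeling of $G^*(\mcM)$ one representative per class suffices, and the antipodal involution $X \leftrightarrow -X$ halves the BFS cost. The main obstacle is computational scale at $n = 9$, where the catalog reaches into the hundreds of thousands of classes and a single graph can have as many as $2\binom{9}{r-1}$ cocircuits; this calls for a distributed implementation organized by $(n,r)$ strata and careful bookkeeping to avoid recomputation.

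\textbf{Part (b).} For $r \leq 3$ the upper bound is already in the literature: Babson, Finschi, and Fukuda \cite[Lemma 6]{Babson-Finschi-Fukuda} and, independently, Felsner et al.~\cite[Lemma 4.1]{Felsner-et-al} prove $\diam(G^*(\mcM)) \leq n - r + 2$ for uniform oriented matroids of rank at most 3, further observing that equality forces $X$ and $Y$ to be antipodal. My plan is to cite these results and combine them with the antipodal lower bound. The low-rank degenerate cases can be verified by direct inspection of the cocircuit structure, since for $r = 2$ the uniform cocircuit graph is a natural $2n$-cycle on the sign-labelled ``pseudo-equator'' points.

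\textbf{Part (c).} This is the substantive case. Writing $k := n - r \leq 4$, each cocircuit of uniform $\mcM$ has support of size exactly $k + 1 \leq 5$, so for any two cocircuits $X, Y$ the combined support $T := \supp(X) \cup \supp(Y)$ has $|T| \leq 2(k+1) \leq 10$. My plan is to construct a crabbed path from $X$ to $Y$ of length at most $n-r+2$ by iterating the cocircuit elimination axiom (CC3). Starting from $X_0 = X$, at the $i$-th step I would pick some $e \in S(X_i, Y)$ and invoke CC3 to obtain a cocircuit $Z$ with $Z^\pm \subseteq (X_i^\pm \cup Y^\pm) \setminus \{e\}$; the challenge is to select $Z$ that is genuinely adjacent to $X_i$ in $G^*(\mcM)$ and has $|Z^0 \cap Y^0| > |X_i^0 \cap Y^0|$, so that the potential $|S(X_i,Y)| + |X_i^0 \setminus Y^0|$ strictly decreases. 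The main obstacle is exactly this upgrade from ``some cocircuit produced by CC3'' to ``an edge of $G^*(\mcM)$.'' Because $k$ is bounded, the minor of $\mcM$ supported on $T$ has at most $10$ elements, so the local facts I need about its cocircuit graph fall within the computer-verified range of part (a); I would bootstrap (a) to guarantee the existence of the adjacent $Z$ at each step. The hardest subcase is $k = 4$ with $|T| = 10$, where the local minor sits right at the boundary of what part (a) establishes, and the compatibility between (CC3) and the edge rule must be extracted most carefully.
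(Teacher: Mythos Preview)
Your plans for (a) and (b) are fine and essentially match the paper; for (b) the paper additionally supplies its own geometric proof in rank three rather than only citing \cite{Babson-Finschi-Fukuda,Felsner-et-al}, but either route is valid.

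Part (c) has a real gap. The reduction you describe---passing to a minor on $T = \supp(X)\cup\supp(Y)$, which has at most $2(k+1)$ elements---is correct and is exactly what the paper does: contracting $X^0\cap Y^0$ preserves corank and embeds the contracted cocircuit graph as a subgraph of $G^*(\mcM)$, so one need only bound diameters in $UOM(r'+k,r')$ for $2\le r'\le k+2$ (this is the paper's Theorem~\ref{corank-k-theorem}). For $k\le 3$ this lands in $n'\le 8$, and for $k=4$ with $|T|\le 9$ it lands in $n'\le 9$; part (a) covers all of these. But when $k=4$ and $\supp(X)\cap\supp(Y)=\emptyset$ you obtain $|T|=10$ and the contraction lies in $UOM(10,6)$, a class that is \emph{not} enumerated in the Finschi--Fukuda database and hence not touched by part (a). Your phrase ``sits right at the boundary of what part (a) establishes'' understates this: it sits strictly outside. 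Your CC3-iteration idea does not rescue the argument either, because elimination produces \emph{some} cocircuit, not necessarily one adjacent to $X_i$ in $G^*(\mcM)$; making that upgrade work in general would show that the lower bound of Lemma~\ref{lem:distance-lower-bound} is always attained, which is far stronger than Conjecture~\ref{conj:mainproblem} and is not known.

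The paper closes the $UOM(10,6)$ hole with a different tool. Since $X^0\cap Y^0=\emptyset$ in this residual case, $\mcT=X\circ Y$ is a tope containing both $X$ and $Y$. Lemma~\ref{lem:tope_AP_equiv} identifies the graph of $\mcT$ with the graph of an abstract polytope (in the sense of Adler--Dantzig \cite{ad}) of dimension $5$ on $10$ elements, and then \cite[Theorem~7.1]{ad} gives that this abstract polytope has diameter $5$, yielding $d_{\mcM}(X,Y)\le 5<6=n-r+2$. This external diameter result for abstract polytopes is the ingredient your plan is missing.
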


Babson, Finschi, and Fukuda \cite[Lemma 6]{Babson-Finschi-Fukuda} and Felsner et al. \cite[Lemma 4.1]{Felsner-et-al} gave proofs of Conjecture \ref{conj:mainproblem} for $r \leq 3$.  We give a new geometric proof in rank three and add new results in low corank in Section 4.

 %Theorem \ref{corank-k-theorem}, and Corollaries \ref{corank-2-theorem} and \ref{corank-3-theorem}. 
%Part (b) 

%    We have verified the main conjecture for small oriented matroids using computers and 
%    the database of Finschi and Fukuda \cite{small-OMs}.
%    \begin{theorem}\label{thm:computational-result}
%    Let $\mathcal{M}$ be a uniform oriented matroid of rank $r$ on $n$ elements. If $n \leq 9$, then
%    $$
%    \diam(G^*(\mathcal{M})) = n-r+2.
%    $$
%    \end{theorem}

We conclude with a quadratic bound on the diameter of the cocircuit graph of an oriented matroid. We modify Finschi's 
proof of Theorem \ref{thm:Finschi-bound} \cite[Proposition 2.6.1]{Finschi-thesis} to give a slightly stronger bound.  
Note that as a consequence of Lemma \ref{perturbation-lemma}, our bound is applicable to all oriented matroids 
rather than just uniform oriented matroids.

%\begin{theorem}\label{thm:UOM-Hirsch-quadratic}
%Let $\mathcal{M}$ be a uniform oriented matroid of rank $r$ on $n$ elements.  Let $X,Y \in \mathcal{C}^*(\mathcal{M})$ such that $X \neq -Y$.  Then 
%$$
%\dist(X,Y) \leq \left \lceil \frac{|X^0 \setminus Y^0|}{2} \right\rceil (n-r+1) 
%\leq \left \lceil \frac{r-1}{2} \right\rceil (n-r+1).
%$$
%\end{theorem}

\begin{theorem}\label{thm:Finschi-improved}
Let $\mcM$ be an oriented matroid of rank $r$ on $n$ elements, and let $X,Y \in \mcC^*(\mcM)$ with $X \neq -Y$. Then 
\begin{equation} \label{quadbound-part1}
d_{\mcM}(X,Y) \leq n-r+1 + \sum_{k=2}^{|X^0 \setminus Y^0|-1}\left(\left \lfloor \frac{n-r-k}{2} \right\rfloor + 1\right).
\end{equation}
In particular, when $r \geq 4$ and $n-r \geq 2$,
\begin{equation} \label{quadbound-part2}
\diam(G^*(\mcM)) \leq n-r+1 + \sum_{k=2}^{\min(r-2,n-r)} \left(\left\lfloor \frac{n-r-k}{2}\right\rfloor +1 \right).
%\diam(G^*(\mcM)) \leq n-r+1 + \lfloor\frac{3}{\min(r,n-r+2)}\rfloor+ \sum_{k=2}^{\min(r-2,n-r)}\left(\left\lfloor \frac{n-r-k}{2}\right\rfloor+1\right).
\end{equation}
%\begin{equation} 
%\diam(G^*(\mcM))
%\begin{cases}
%=n - r + 2 & \text{ if } 2 \leq r \leq 3 \text{ or } 0 \leq n-r \leq 1\\
%\leq n-r+1 + \sum\limits_{k=2}^{\min(r-2,n-r)} \left(\left\lfloor \frac{n-r-k}{2}\right\rfloor +1 \right) & \text{ otherwise}.
%\end{cases}
%\end{equation}
\end{theorem}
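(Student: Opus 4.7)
The plan is to follow Finschi's induction for Theorem \ref{thm:Finschi-bound}, replacing his base case with the sharper Lemma \ref{lem:distance-lower-bound} and, for $j=2$, with the Babson--Finschi--Fukuda rank-3 diameter result. By Lemma \ref{perturbation-lemma} we may assume $\mcM$ is uniform, so $|X^0| = r-1$ for every cocircuit. We then prove inequality (\ref{quadbound-part1}) by induction on $j := |X^0 \setminus Y^0|$. For the base cases $j \leq 2$ the sum in (\ref{quadbound-part1}) is empty and we need $d_{\mcM}(X,Y) \leq n-r+1$. When $j \in \{0,1\}$, Lemma \ref{lem:distance-lower-bound} yields $d_{\mcM}(X,Y) = 1 + |S(X,Y)|$, and since $S(X,Y) \subseteq E \setminus (X^0 \cup Y^0)$ while $|S(X,Y)| = n-r+1$ would force $X = -Y$, we obtain $d_{\mcM}(X,Y) \leq n-r+1$. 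For $j = 2$, contract the $r-3$ elements of $X^0 \cap Y^0$ to reduce to a uniform rank-3 OM on $n-r+3$ elements; the images of $X$ and $Y$ are non-antipodal (their zero sets $X^0 \setminus Y^0$ and $Y^0 \setminus X^0$ are disjoint two-element sets), so the rank-3 bound recorded in Theorem \ref{thm:lowrankcorank}(b) (together with the fact that non-antipodes in uniform rank 3 have distance at most $n-r+1$) gives $d_{\mcM}(X,Y) \leq (n-r+3) - 3 + 1 = n-r+1$.

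For $j \geq 3$, we produce a cocircuit $Z$ with $|Z^0 \setminus Y^0| = j-1$ at distance $d_{\mcM}(X,Z) \leq \lfloor (n-r-(j-1))/2 \rfloor + 1$ from $X$, and then apply the inductive hypothesis to $(Z,Y)$ to telescope into the claimed sum. To construct $Z$, fix any $e \in X^0 \setminus Y^0$ and let $\mcP_e$ be the pencil of cocircuits of $\mcM$ whose zero set contains $X^0 \setminus \{e\}$; this pencil is the cocircuit graph of the rank-2 uniform contraction $\mcM/(X^0 \setminus \{e\})$, a cycle of length $2(n-r+2)$ sitting as a subgraph inside $G^*(\mcM)$. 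Exactly $2j$ cocircuits of $\mcP_e$ satisfy $|Z^0 \setminus Y^0| = j-1$, in bijection with the $j$ elements of $Y^0 \setminus X^0$ together with their antipodes, and axiom (CC1) forces them into $j$ antipodal pairs on the cycle. A direct gap analysis on a cycle of length $2(n-r+2)$ carrying $2j$ marks in antipodal pairs shows the longest unmarked arc has length at most $(n-r+2) - j + 1 = n-r-j+3$, so the nearest good cocircuit $Z$ lies within distance $\lfloor (n-r-j+3)/2 \rfloor = \lfloor (n-r-(j-1))/2 \rfloor + 1$ from $X$, as required.

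The main technical obstacle is the pencil arc-length estimate: one must confirm that clustering all $j$ antipodal pairs on one hemisphere is the genuine worst case, and that the cocircuit-graph distance inside $\mcP_e$ agrees with the cycle distance (so that the estimate transfers as an upper bound to $G^*(\mcM)$). Once (\ref{quadbound-part1}) is in hand, (\ref{quadbound-part2}) follows quickly: in the uniform case $|X^0 \setminus Y^0| \leq \min(r-1, n-r+1)$, so the sum in (\ref{quadbound-part1}) truncates at $k = \min(r-2, n-r)$, while for the antipodal case $X = -Y$ Lemma \ref{lem:distance-lower-bound} gives $d_{\mcM}(X, -X) = n-r+2$, which is dominated by the right-hand side of (\ref{quadbound-part2}) whenever $r \geq 4$ and $n-r \geq 2$, because under these hypotheses the sum contains the $k=2$ term $\lfloor (n-r-2)/2 \rfloor + 1 \geq 1$.
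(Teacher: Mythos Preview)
Your proposal is correct and follows essentially the same approach as the paper: reduce to the uniform case via Lemma~\ref{perturbation-lemma}, handle $j\le 2$ by Lemma~\ref{lem:distance-lower-bound} and the rank-$3$ contraction (which is exactly Corollary~\ref{cor:Hirsch-diam-contract-to-rank3}), and for $j\ge 3$ walk along a coline to a cocircuit $Z$ that lowers the inductive parameter by one, picking up an additive term $\lfloor(n-r-(j-1))/2\rfloor+1$ each time. The only cosmetic difference is that the paper takes the coline through $Y$ (with $U^0=Y^0\setminus\{e\}$) and bounds $d_{\mcM}(Y,Z)$ by locating the arc from $Z^1$ to $-Z^\ell$ through $Y$, whereas you take the coline through $X$ and phrase the same estimate as a gap-length bound for $2j$ antipodal marks on a $2(n-r+2)$-cycle; by the $X\leftrightarrow Y$ symmetry of the inductive parameter these are the same argument, and your antipodal-pair gap bound $L\le n-r-j+3$ together with $\min(d(X,A),d(X,B))\le\lfloor L/2\rfloor$ reproduces the paper's estimate exactly.
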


This bound contrasts the best-known upper bounds on polytope diameters, which are linear in fixed dimension, but grow exponentially in the dimension (e.g., \cite{Kalai-Kleitman} and  \cite{Eisenbrandetal2010}). For a survey of the best bounds and more updates about diameters of polytopes see \cite{Criado2017,CriadoSantos, Eisenbrandetal2010, Santos-recent, Sukegawa19} and the references therein.

It is not immediately clear whether bounds on the diameter of the cocircuit graph of a realizable oriented matroid imply bounds on polytope diameters. This possible connection has been discussed before. For example, a connection of the (original) Hirsch conjecture to Conjecture \ref{conj:mainproblem} was stated in Remark 4.3 of \cite{Felsner-et-al}. Here we clarify how a quadratic bound for oriented matroids may  have implications for the polynomial Hirsch conjecture of polytopes.  For this it is important to ask a related question: if $X$ and $Y$ are vertices in a (poly)tope $\mcT$, does the shortest path from $X$ to $Y$ in the supergraph $G^*(\mcM)$ leave the region/tope $\mcT$? 
If the following conjecture is true, it would imply a quadratic bound on the diameter of polytopes! We have checked the validity of Conjecture \ref{London-Paris-Conj} for oriented matroids up to nine elements with computers.

\begin{conjecture}\label{London-Paris-Conj}
Let $\mcM$ be a uniform oriented matroid, and let $X,Y \in \mcC^*(\mcM)$ be cocircuits that are vertices of at least one tope of $\mcM$. Then there exists a tope $\mcT$ in $\mcM$ such that $X, Y \in \mcT$ and $d_{\mcM}(X, Y) = d_{\mcT}(X, Y)$.
\end{conjecture}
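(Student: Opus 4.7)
The plan is to reduce to the case $S(X,Y) = \emptyset$, which is forced by the hypothesis that $X$ and $Y$ lie in a common tope, and then proceed by induction on the parameter $k = |X^0 \setminus Y^0|$. Denote by $\mathcal{T}(X,Y)$ the (nonempty) set of topes containing both cocircuits; combinatorially these correspond to the sign vectors of $\mcM$ obtained by completing the composition $X \circ Y$ to a full sign vector on the coordinates in $X^0 \cap Y^0$. The base case $k \leq 1$ is immediate from Lemma \ref{lem:distance-lower-bound}: either $X = Y$, or $d_\mcM(X,Y) = 1 + |S(X,Y)| = 1$, so any $\mcT \in \mathcal{T}(X,Y)$ witnesses the equality $d_\mcT(X,Y) = d_\mcM(X,Y)$.

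For the inductive step with $k \geq 2$, the goal is to produce a cocircuit $Z$ adjacent to $X$ in $G^*(\mcM)$ such that (i) $Z$ lies on a shortest $X$-to-$Y$ path, (ii) $S(Z,Y) = \emptyset$, and (iii) some tope in $\mathcal{T}(X,Y)$ also contains $Z$. Candidates for $Z$ arise from combinatorial pivots at $X$: for each $e \in X^0$ and each suitable pivot partner $f \in \supp(X)$ prescribed by axiom (CC3), there is an adjacent cocircuit $Z$ with $Z^0 = (X^0 \setminus \{e\}) \cup \{f\}$. Choosing $e \in X^0 \setminus Y^0$ and signing the pivot so that $Z$ agrees with $Y$ on $\supp(Z) \cap \supp(Y)$ gives a candidate with $|Z^0 \setminus Y^0| = k-1$. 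Given such a $Z$ satisfying (i)--(iii), the inductive hypothesis applied to the pair $(Z,Y)$ yields a tope $\mcT$ containing $Z$, $Y$, and (by (iii)) also $X$, with $d_\mcT(Z,Y) = d_\mcM(Z,Y) = d_\mcM(X,Y)-1$; prepending the edge $XZ$ produces a path of length $d_\mcM(X,Y)$ inside $\mcT$.

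The main obstacle is precisely the simultaneous realization of properties (i)--(iii): a priori it is possible that every pivot cocircuit sign-compatible with $Y$ fails to decrease the distance to $Y$, or conversely that every $Z$ on a shortest path conflicts with $Y$ on some entry. A more robust route would be to establish the following metric convexity statement, which implies the conjecture directly: for every tope $\mcT$, the vertex set of $\mcT$ is a \emph{geodesically convex} subset of $G^*(\mcM)$, meaning that every shortest path in $G^*(\mcM)$ between two vertices of $\mcT$ can be chosen to lie entirely in $\mcT$. Proving such convexity amounts to showing that any shortcut leaving $\mcT$ can be exchanged for a path of the same length remaining inside $\mcT$; a natural approach would combine repeated applications of axiom (CC3) at the boundary crossings of the path with the structure of the big face lattice $\Gamma(\mcL)$ to iteratively reflect excursions back into $\mcT$ without loss of length. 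This reduction is the true heart of the problem and explains why the authors' verification has so far been computer-assisted and limited to $n \leq 9$.
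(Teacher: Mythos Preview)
The statement you are attempting to prove is Conjecture~\ref{London-Paris-Conj}, and the paper does \emph{not} prove it: the authors only report a computer verification for $n \le 9$ and explicitly observe that Conjectures~\ref{conj:mainproblem} and~\ref{London-Paris-Conj} cannot both hold in general. So there is no ``paper's own proof'' to compare against, and your closing remark that the verification remains computer-assisted is exactly the state of affairs.

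That said, your outline has a structural gap beyond the obstacle you already flag. Suppose you find $Z$ adjacent to $X$ satisfying (i)--(iii). Property (iii) gives you \emph{some} tope $\mcT_0 \in \mcT(X,Y)$ containing $Z$. The inductive hypothesis applied to $(Z,Y)$ then produces \emph{some} tope $\mcT_1$ with $Z,Y \in \mcT_1$ and $d_{\mcT_1}(Z,Y)=d_{\mcM}(Z,Y)$. But nothing forces $\mcT_1 = \mcT_0$, and hence nothing forces $X \in \mcT_1$; your claim ``(by (iii)) also $X$'' does not follow. To make the induction close you would need the stronger hypothesis that \emph{every} tope containing $Z$ and $Y$ realizes $d_{\mcM}(Z,Y)$ --- which is precisely the geodesic-convexity statement you propose in the last paragraph. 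In other words, the weaker existential form of the conjecture does not seem to be self-strengthening under your induction; you are pushed immediately to the convexity version.

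The geodesic-convexity reformulation is natural, but note that the paper records a warning sign: the analogous statement for faces of polytopes is already false (two vertices can share a facet while every shortest path leaves it). So any argument via (CC3)-reflections would have to use something specific to topes of oriented matroids that fails for polytope facets, and it is not clear what that ingredient would be.
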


The rest of the paper is structured as follows.  In Section \ref{section:overview}, we quickly review the key aspects of oriented matroids that will be relevant for us. In Section \ref{section:reductions}, we prove  Lemma \ref{perturbation-lemma}, reducing Conjecture \ref{conj:mainproblem} to studying uniform oriented matroids.  We also establish some simple lower bounds on diameter and show the bound in Conjecture \ref{conj:mainproblem} cannot be improved because the distance between antipodal cocircuits is exactly $n-r+2$  (see the proof of Lemma \ref{lem:distance-lower-bound}). Section \ref{section:smallmatroids} begins with computational results that establish Conjecture \ref{conj:mainproblem} for uniform oriented matroids whose ground set has at most nine elements (see Theorem \ref{thm:computational-result}).  We then move on to establish Conjecture \ref{conj:mainproblem} in low rank and low corank, including a nice geometric argument for uniform oriented matroids in rank three (see Theorem \ref{thm:Hirsch-UOM-rank3}).  Section \ref{section:quadraticbound} discusses a stronger quadratic upper bound on the diameter of a uniform oriented matroid (see Theorem \ref{thm:Finschi-improved}). We conclude with a discussion of Conjecture \ref{London-Paris-Conj}.

%%%%%%%%%%%%%%%%%%%
%%%%%%%%%%%%%%%%%%%
%%%%%%%%%%

\section{A Quick Review of Oriented Matroids} \label{section:overview}

Let $E = \{\vv_1,\ldots,\vv_n\} \subseteq \mathbb{R}^r$ be any set of vectors.  For simplicity, we will assume $E$ spans $\mathbb{R}^r$.  We will not make a distinction between $E$ as a set of vectors or $E$ as a matrix in $\bbR^{r \times n}$.  In classical matroid theory, we consider the set of linear dependences among the vectors in $E$.  In oriented matroid theory, we consider not only the set of linear dependences on $E$, but also the signs of the coefficients that make up these dependences.  To any linear dependence $\sum_{i=1}^n z_i\vv_i = \zerovec$ we associate a \textit{signed vector} $\left(\sign(z_i)\right)_{i=1}^n$.  The \textit{sign} of a number $z \in \mathbb{R}$, denoted $\sign(z) \in \{+,-,0\}$, encodes whether $z$ is positive, negative, or equal to $0$. If $\vz = (z_1,\ldots,z_n) \in \mathbb{R}^n$ is a vector, we use $\sign(\mathbf{z})$ to denote the vector of signs: $\sign(\mathbf{z}) := \left(\sign(z_i)\right)_{i=1}^n \in \{+,0,-\}^n.$  We define the set of \textit{signed vectors} on $E$ as 
$$
\mcV(E) = \{\sign(\vz): \vz \text{ is a linear dependence on } E\}.
$$  In other words, $\mcV(E) = \{\sign(\vz): E\vz = \zerovec\}$.  

Among all signed vectors determined by linear dependences on $E$, those with minimal (and nonempty) support under inclusion, are called the \textit{signed circuits} of $E$.  The set of such signed circuits is denoted $\mcC(E)$.  

Dually, for any $\vc \in \bbR^r$, we can consider the \textit{signed covector} $\left(\sign(\vc^T\vv_i)\right)_{i=1}^n$.  The set of 
all signed covectors on $E$ is 
$$
\mcV^*(E) = \{\sign\left(\vc^TE\right):\vc \in \bbR^r\}.
$$
The set of signed covectors of minimal, nonempty support are called \textit{signed cocircuits} and are denoted by $\mcC^*(E)$. It is important to note that if $X$ is a cocircuit, then so is $-X$.

Summarily, to any collection of vectors $E \subseteq \mathbb{R}^r$, there are four sets of  vectors that encode dependences among $E$.  Those are the signed vectors $\mcV(E)$ arising from linear dependences, the signed circuits $\mcC(E)$ arising from minimal linear dependences, the signed covectors $\mcV^*(E)$ arising from valuations of linear functions, and signed cocircuits $\mcC^*(E)$ arising from linear valuations of minimal support.  The first fundamental result in oriented matroid theory shows that any one of these sets is sufficient to determine the other three \cite[Corollary 6.9]{Ziegler-book}.  Any oriented matroid that arises from a collection of signed cocircuits in this way is called a \textit{realizable} oriented matroid.

Now we are ready to motivate the definition of oriented matroids through a geometric model that proves to be more useful than the axiomatic definition.  Let $E = \{\vv_1, \ldots, \vv_n\} \subseteq \bbR^r$ be a collection of vectors, and let $\mcM(E)$ be the oriented matroid determined by $E$.  To each vector $\vv_i$, there is an associated hyperplane $H_i:= \{\vx \in \bbR^r : \vx^T \vv_i = 0\}$. Each $H_i$ is naturally oriented by taking $H_i^+:= \{\vx \in \bbR^r: \vx^T \vv_i > 0\}$ and defining $H_i^-$ analogously.  

Therefore, the vectors in $E$ determine a central hyperplane arrangement $\mcH$ in $\mathbb{R}^r$.  Any vector $\vx \in \mathbb{R}^r$ has an associated sign vector determined by its position relative to the hyperplanes in $\mcH$. These signs can be computed as $\sign(\vx^T\vv_i)$ for each $i$; in other words, by computing $\sign\left(\vx^TE\right)$. Therefore, the signed covectors of $\mcM(E)$ are in bijection with the regions of the hyperplane arrangement $\mcH$. 

Further, because $\sign\left(\vx^TE\right) = \sign\left((c\vx)^TE\right)$ for any positive scalar $c$, no information from $\mcH$ is lost if we intersect $\mcH$ with the unit sphere $\mathbb{S}^{r-1}$, giving a collection of codimension-one spheres $\{s_i = H_i \cap \bbS^{r-1} : H_i \in \mcH\}$.  This induces a cell decomposition of $\mathbb{S}^{r-1}$ whose nonempty faces correspond to covectors of $\mcM(E)$ and whose vertices  correspond to cocircuits of $\mcM(E)$. The regions corresponding to covectors of maximal support are called \textit{topes}. An example is illustrated in Figure \ref{figure:fig1}.  In that figure, the cocircuit $X$ is encoded by the sign vector $(+,+,0,-,0)$. Similarly, the shaded region (a tope) corresponds to the covector $(+,+,+,-,+)$.  

\begin{figure}
\centering
\includegraphics[scale=.75]{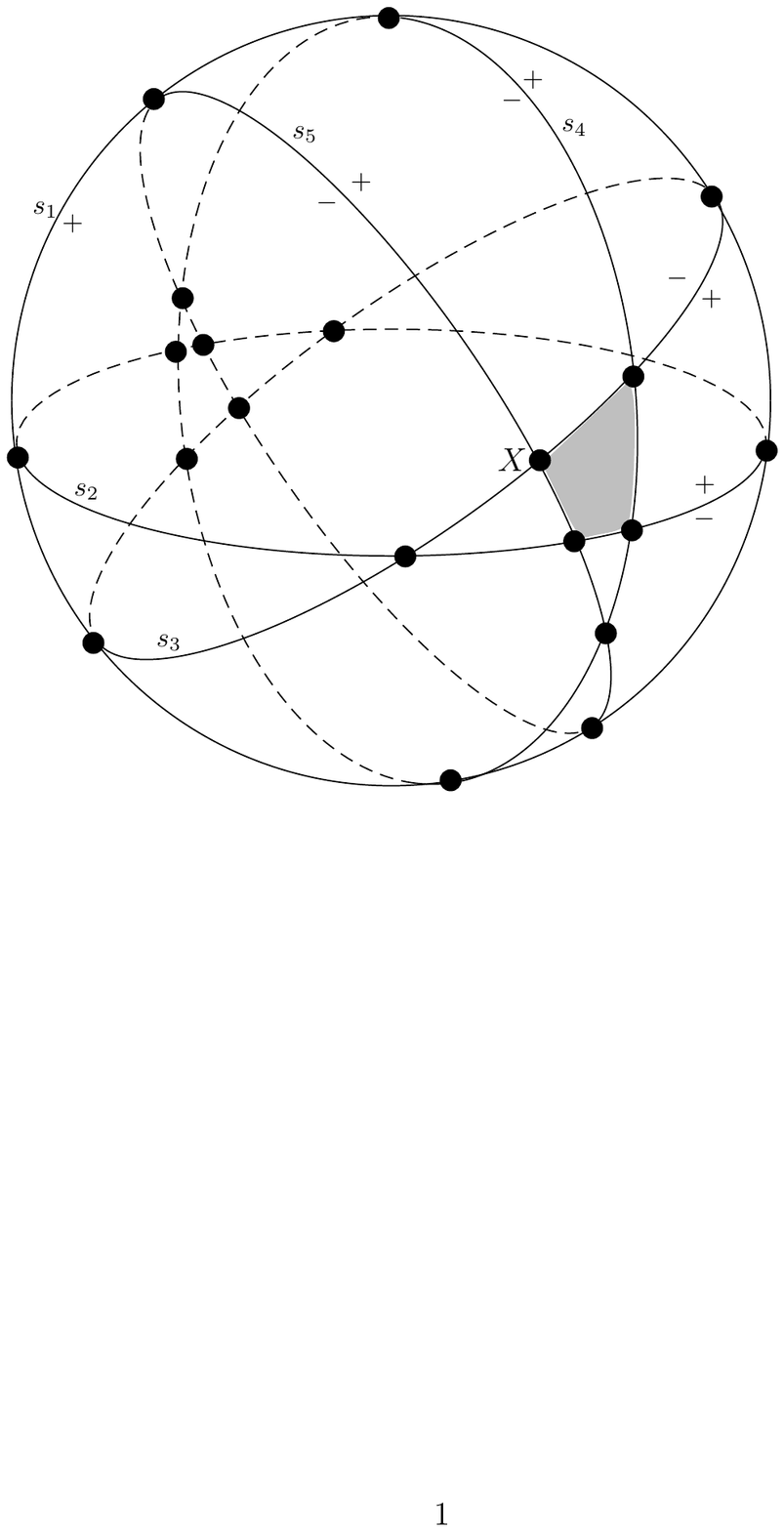}
\caption{An oriented matroid arising from an arrangement of five hyperplanes.}
\label{figure:fig1}
\end{figure}

Not all matroids can be oriented. Determining whether a matroid is orientable is an NP-complete problem, 
even for fixed rank (see \cite{OrientNP}). But, a topological model provides the ``right'' intuition for visualizing arbitrary oriented matroids.  Every oriented matroid can be viewed as an arrangement of equators on a sphere, as in the realizable case, provided that one is allowed to slightly perturb the spheres determined by $H_i \cap \mathbb{S}^{r-1}$ in the following way. 

Let $Q$ be an equator of $\bbS^{r-1}$; that is, the intersection of $\bbS^{r-1}$ with some $(r-1)$-dimensional subspace of $\bbR^r$. If $\varphi: \bbS^{r-1} \rightarrow \bbS^{r-1}$ is a homeomorphism, then the image of the equator $\varphi(Q) \subseteq \bbS^{r-1}$ is called a \textit{pseudosphere}. Because $Q$ decomposes $\bbS^{r-1}$ into two pieces, so too does $\varphi(Q)$.  Therefore, we may define an 
\textit{oriented pseudosphere} to be a pseudosphere, $s$, together with a choice of a positive side $s^+$ and negative side $s^-$.  
Now we may define an \textit{arrangement} of pseudospheres in $\bbS^{r-1}$ to be a finite collection of pseudospheres $\mcP = \{s_e: e \in E\} \subseteq \bbS^{r-1}$ such that 

\begin{enumerate}
\item for any subset $A \subseteq E$, the set $S_A = \bigcap_{e \in A}s_e$ is a topological sphere, and
\item if $S_A \not\subseteq s_e$ for $A \subseteq E$ and $e \in E$, then $S_A \cap s_e$ is a pseudosphere in $S_A$ with two parts, 
$S_A \cap s_e^+$ and $S_A \cap s_e^-$. 
\end{enumerate}

A pseudosphere arrangement is \textit{essential} if $\bigcap_{e \in E}s_e = \emptyset$.  Any essential pseudosphere arrangement $\mcP$ induces a regular cell decomposition on $\bbS^{r-1}$.  Because each pseudosphere in $\mcP$ has a positive and negative side, the cells of this decomposition are naturally indexed by sign vectors in $\{+,-,0\}^E$.  We use $\Gamma(\mcP)$ to denote the poset of such sign vectors, ordered by face containment.  We have encountered this same (abstract) poset before as $\Gamma(\mcL)$ in the introduction, the poset induced over the set of covectors $\mcL$ of an oriented matroid. 
As it turns out the following theorem of Folkman and Lawrence gives an exact correspondence between oriented matroids and pseudosphere arrangements. The same sets of sign vectors appear in both cases.

\begin{theorem} {\rm{(Topological Representation Theorem \cite{Folkman-Lawrence}})}

Let $\mcP$ be an essential arrangement of pseudospheres in $\bbS^{r-1}$.  Then $\Gamma(\mcP) \cup \{\zerovec\}$ is the set of covectors of an oriented matroid of rank $r$.  Conversely, if $\mcV^*$ is the set of covectors of a loopless oriented matroid of rank $r$, then there exists an essential arrangement of pseudospheres $\mcP$ on $\bbS^{r-1}$ with $\Gamma(\mcP) = \mcV^* \setminus \{\zerovec\}$.
\end{theorem}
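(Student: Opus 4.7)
The theorem has two directions, and they demand fundamentally different techniques. My plan is to handle the easier "arrangement $\Rightarrow$ oriented matroid" direction by direct axiom verification, and the deeper "oriented matroid $\Rightarrow$ arrangement" direction by an induction on the number of elements that extends a pseudosphere arrangement for a one-element deletion.

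For the forward direction, given an essential pseudosphere arrangement $\mcP = \{s_e : e \in E\}$, I would assign to each open cell $c$ of the induced decomposition of $\bbS^{r-1}$ the sign vector $X(c) \in \{+,-,0\}^E$ whose $e$-th coordinate records whether $c \subseteq s_e^+$, $c \subseteq s_e^-$, or $c \subseteq s_e$. Setting $\mcV^* = \{X(c) : c \text{ a cell}\} \cup \{\zerovec\}$, I would verify the four covector axioms: the inclusion of $\zerovec$ is by fiat; closure under negation follows from the antipodal involution of $\bbS^{r-1}$ (using the fact that each $s_e$ is centrally symmetric up to the chosen homeomorphism, which is built into the definition); closure under composition $X \circ Y$ follows from starting at a point of cell $X$ and moving an infinitesimal amount toward a point of cell $Y$, landing in the cell indexed by $X \circ Y$; and the elimination axiom follows because, given covectors $X, Y$ with $e \in S(X,Y)$, the cells $X$ and $Y$ lie on opposite sides of $s_e$, so a path in $\bbS^{r-1}$ joining them must cross $s_e$ through a cell $Z$ which one can arrange to satisfy $Z_e = 0$ and $Z_f \in \{X_f, Y_f, 0\}$ for $f \neq e$. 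Rank equal to $r$ comes from essentiality: $\bigcap_e s_e = \emptyset$ forces the longest chain in $\Gamma(\mcP)$ to have length $r$.

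For the converse, I would induct on $|E| = n$; the base case $n = r$ is a Boolean arrangement, easily realizable on $\bbS^{r-1}$. For the inductive step, fix $e \in E$, form the deletion $\mcM \setminus e$, and by induction obtain an essential pseudosphere arrangement $\mcP'$ on $\bbS^{r-1}$ (after reducing rank if $e$ becomes a coloop, handled separately) whose covectors are exactly $\mcV^*(\mcM \setminus e)$. The central task is to construct a single oriented pseudosphere $s_e$ inside $\bbS^{r-1}$ that refines the cell structure of $\mcP'$ so as to realize the full set $\mcV^*(\mcM)$. I would specify $s_e$ cell-by-cell by using the \emph{localization} $\sigma_e : \mcV^*(\mcM \setminus e) \to \{+,-,0\}$ defined by projecting the $e$-coordinate of the corresponding covector of $\mcM$; the subcomplex on which $\sigma_e = 0$ is a candidate for $s_e$. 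The work is then to show this subcomplex is a pseudosphere, i.e., that it is PL-homeomorphic to an equatorial $\bbS^{r-2}$ of $\bbS^{r-1}$, and that the relevant complementary regions can be oriented consistently.

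The main obstacle is precisely this topological verification: showing that the locus $\{\sigma_e = 0\}$ is a tamely embedded $(r-2)$-sphere rather than some wilder subcomplex. The key input is the shellability of the "big face lattice" of the oriented matroid (the poset $\Gamma(\mcL)$ together with a top element), from which one extracts a PL-ball structure on each side of $s_e$; gluing two balls along their common boundary sphere produces $\bbS^{r-1}$ with $s_e$ as the equator. I would follow the strategy of Edmonds and Mandel refining Folkman–Lawrence: build a regular CW decomposition of $\bbS^{r-1}$ whose face poset is $\Gamma(\mcL) \setminus \{\zerovec\}$ by recursively coning over a lower-rank representation, and then read off the pseudospheres $s_e$ as unions of closed cells indexed by covectors with $e$-coordinate zero. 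Once the topological representation is in place, the correspondence $\Gamma(\mcP) = \mcV^* \setminus \{\zerovec\}$ is automatic from the construction, and essentiality $\bigcap_e s_e = \emptyset$ corresponds to looplessness of $\mcM$, which is the standing hypothesis.
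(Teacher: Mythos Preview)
The paper does not prove this theorem at all: it is stated with attribution to Folkman and Lawrence \cite{Folkman-Lawrence} and then used as a black box throughout the rest of the paper. There is no proof in the paper to compare your proposal against.

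That said, your outline is a reasonable sketch of the standard literature proof (the Folkman--Lawrence argument as streamlined by Edmonds and Mandel, cf.\ \cite[Chapter~5]{OMBook}). The forward direction is essentially as you describe, though the composition and elimination verifications require more care than ``move infinitesimally'' suggests, since the cells are open and one must work with closures and the regular CW structure. For the converse, your inductive plan via deletion and localization is the right skeleton, but you correctly identify the hard part: proving that the zero locus of the localization is a tamely embedded $(r-2)$-sphere. This is where the real work lies, and your proposal defers it to shellability of the big face lattice without indicating how that shellability is established or how it yields the PL-ball structure. In a full proof this step occupies many pages; as a proposal it is honest about the gap but does not close it.
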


If $\mcM$ is an oriented matroid, the pseudosphere arrangement $\mcP$ guaranteed by the Topological Representation Theorem is called the \textit{Folkman-Lawrence representation} of $\mcM$. Two elements $e, f \in E$ are \emph{parallel} if $X_e = X_f$ for all $X \in \mcL$ or $X_e = -X_f$ for all $X \in \mcL$. Note that we can eliminate parallel elements without changing the pseudosphere arrangement $\mcP$.

\begin{remark}
Let $\mcM$ be a uniform oriented matroid of rank $r$.  If $A \subseteq E(\mcM)$ is any set with $|A| \leq r-1$, then $S_A = \bigcap_{e \in A} s_e$ is an $(r-1-|A|)$-dimensional pseudosphere in the Folkman-Lawrence representation $\mcP(\mcM)$. 
\end{remark}

Let $\mcM$ be an oriented matroid of rank $r$, and let $\mcP$ be the Folkman-Lawrence representation  of $\mcM$.  Then the underlying graph of $\mcP$ (as a cell complex) is the cocircuit graph $G^*(\mcM)$.  This provides a geometric model for visualizing cocircuit graphs of oriented matroids. A \emph{coline} in $\mcM$ is a one-dimensional sphere in the Folkman-Lawrence representation of $\mcM$. In matroidal language, a coline is a covector that covers a cocircuit in the natural component-wise partial order where $0 < +,-$.  For a uniform oriented matroid of rank $r$, a coline is a covector $U$ with $|U^0| = r-2$. Further, in a uniform oriented matroid, for each subset $S \in {[n] \choose r-2}$, there exists a coline $U$ with $U^0 = S$. The graph of any coline is a simple cycle of length $2(n-r+1)$.

The Folkman-Lawrence representation gives us a more concrete topological understanding of the following operations on oriented matroids. Let $\mcM$ be an oriented matroid on ground set $E$ with signed covectors $\mcV^*(\mcM)$, and let $A\subseteq E$. The \emph{restriction} of a sign vector $X \in \{+, -, 0\}^E$ to $A$ is the sign vector $X|_A\in \{+, -, 0\}^A$ defined by $(X|_A)_e = X_e$ for all $e \in A$.  The \emph{deletion} $\mcM \backslash A$ is the oriented matroid with covectors $$\mcV^*(\mcM \backslash A)= \{X|_{E\backslash A} \ : \ X \in \mcV^*(\mcM) \}\subseteq \{+, -, 0\}^{E\backslash A}.$$ The \emph{contraction} $\mcM / A$ is the oriented matroid with covectors $$\mcV^*(\mcM/A) = \{X|_{E\backslash A} \ : \ X \in \mcV^*(\mcM), A\subseteq X^0 \}\subseteq \{+, -, 0\}^{E\backslash A}.$$ The fact that $\mcM \backslash A$ and $\mcM/A$ are oriented matroids is proved in \cite[Lemma 4.1.8]{OMBook}.

 The deletion $\mcM \backslash A$ is also referred to as the restriction of $\mcM$ to $E \backslash A$. Geometrically, $\mcM \backslash A$ is the oriented matroid of the same rank as $\mcM$ obtained by removing pseudospheres $\{s_e \ : \ e \in A \}$. The contraction $\mcM / A$ is the oriented matroid of obtained by intersection $S_A$ with $\{s_e \ : \ e \in E \backslash A\}$.
 
Note also that the pseudosphere arrangement of an oriented matroid of rank  $r$ lies on the sphere $\bbS^{r-1}$. The \emph{topes} correspond to the regions, homeomorphic to balls of dimension $r-1$, that partition the sphere. For realizable oriented matroids coming from a hyperplane arrangement, topes are actual convex polytopes. 

Given a tope $\mcT$ of an oriented matroid $\mcM$, we define  its graph as the subgraph of
$G^*(\mcM)$ induced by the cociruits of $\mcM$ in $\mcT$. Next, we show the graph of a tope $\mcT$ in a uniform oriented 
matroid $\mcM$ of rank $r$ on $n$ elements, is isomorphic to a graph of an abstract polytope of dimension 
$r-1$ on $n$ elements. Abstract polytopes, an abstraction of simple polytopes,
were introduced by Adler and Dantzig \cite{ad} for the purpose of studying the diameter of their graphs. Abstract polytopes have been further generalized in recent years by several authors (see \cite{Eisenbrandetal2010, Santos-recent} and references there 
for details).

   \begin{definition}	\label{def:abstract_polytope}
	Let $T$ be a finite set. A family $\mcA$ of subsets of $T$ (called
	{\it vertices}) forms a {\it d-dimensional abstract polytope on the ground set $T$}  if the following three axioms
	are satisfied:
	\begin{enumerate}
		\item[(i)]
		Every vertex of $ \mcA$ has cardinality $d$.
		\item[(ii)] Any subset of $d - 1$ elements of $T $ is either contained in no vertices
		of $ \mcA$ or in exactly two (called neighbors or adjacent vertices).
		\item[(iii)] Given any pair of distinct vertices $X,Y \in  \mcA$, there exists a sequence of vertices
		\newline \noindent
		$X=Z_0, Z_1, \ldots, Z_k=Y$ in $\mcA$ such that
		\begin{enumerate}
			\item[(a)]
			$Z_i,Z_{i+1}$ are adjacent for all $i=0,1, \ldots, k-1$, and
			\item[(b)]
			$X \cap Y \subset Z_i$ for all  $ i=0,1, \ldots, k.$  
		\end{enumerate}
	\end{enumerate}

The graph $G_{abs}(\mcA)$ of an abstract polytope $\mcA$ is composed of nodes corresponding to its vertices, where two vertices 
are adjacent on the graph as specified in axiom (ii).
\end{definition} 

\medskip

Consider a simple polytope $\mcP$ of dimension $d$ which is the intersection of $n$ facet-defining half-spaces. Then, indexing the $n$ facets
by $1, \ldots, n$, the family of all sets of indices that define a vertex of $\mcP$ is an abstract polytope of dimension $d$ on the ground set $\{1, \ldots, n\}$.
In particular, the three axioms of abstract polytopes state that the graph  $G(\mcP)$ associated with the vertices of $\mcP$ has the following 
three properties:
	\begin{enumerate}
	\item[(i)]
    $G(\mcP)$ is regular of degree $d$ (as all the hyperplanes corresponding to the half-spaces are in general position.)
	\item[(ii)]
	All  edges of $G(\mcP)$ have two vertices as end points (as $\mcP$  is bounded).
	 	\item[(iii)]
	 	For any two vertices $X,Y$ that lie in a face $F$ of $\mcP$, there exists a path between the nodes corresponding 
		to $X$ and $Y$ on $G(\mcP)$  composed entirely of nodes corresponding to vertices on $F$ (as $F$ is also a polytope.)
	  \end{enumerate}
Interestingly, while the axioms of abstract polytopes represent only three  basic properties  related to graphs of simple polytopes, a substantial number
 of the results related to diameter of simple polytopes in \cite{Klee-Walkup} have been proved in \cite{ad} for abstract polytopes. 
 
 Next, we show that these properties are satisfied by the graph of topes of uniform oriented matroids.

\begin{lemma}\label{lem:tope_AP_equiv}
	Given a uniform oriented matroid $\mcM = (E,\mcC^*)$ of rank $r\geq 2$ and a tope $\mcT$ of $\mcM$,
	%,and a signed set $T \in \{+,-,0\}^E$ which corresponds to a 
	%tope $\mcT$,
	let 
	\begin{equation*} \label{eq:tope_ap}
	%\mcU=\{X \in \mcC^*| \; \mbox{ for all } e\in E,  \;X_e \neq \{0\} \Rightarrow  X_e=T_e \},  
	\mcC_{\mcT}=\{X \in  \mcC^*: X  < \mcT  \},  
	\mbox{ and }
	\mcA=\{X^0 : \  X \in 	\mcC_{\mcT} \} .
	\end{equation*}
	Then, $\mcA$ is a $d$-dimensional  abstract polytope on the ground set $E$, where $d=r-1$. Moreover, the graph $ G(\mcT)$ of $\mcT$ is isomorphic to the graph $G_{abs}(\mcA)$ of $\mcA$.
	\end{lemma}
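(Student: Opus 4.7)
The plan is to verify the three axioms of Definition~\ref{def:abstract_polytope} in turn and then exhibit the isomorphism $G(\mcT) \cong G_{abs}(\mcA)$ via the map $X \mapsto X^0$.

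Axiom (i) is immediate: since $\mcM$ is uniform of rank $r$, every cocircuit $X$ satisfies $|X^0| = r-1 = d$, so every vertex of $\mcA$ has cardinality $d$.

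For axiom (ii), fix $S \subseteq E$ with $|S| = r-2$. Vertices of $\mcA$ containing $S$ correspond to cocircuits $X \in \mcC_{\mcT}$ with $X^0 \supseteq S$, and by uniformity such $X$ lie on the coline subsphere $S_S$. As recalled in the excerpt, the graph on $S_S$ is a cycle of length $2(n-r+1)$ whose $0$-cells are these cocircuits and whose $1$-cells are covectors $U$ with $U^0 = S$. The key observation is that at most one arc of $S_S$ lies below $\mcT$: the requirement $U \leq \mcT$ together with $U^0 = S$ forces $U_e = \mcT_e$ for every $e \in E \setminus S$, pinning down $U$ uniquely. Conversely, if $X \in \mcC_{\mcT}$ lies on $S_S$ and $e$ denotes the unique element of $X^0 \setminus S$, then among the two arcs incident to $X$ (distinguished by the sign of their $e$-entry), the one with $U_e = \mcT_e$ satisfies $U \leq \mcT$. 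Consequently the selected cocircuits coincide with the endpoints of the (at most one) selected arc, yielding either $0$ or exactly $2$ of them.

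For axiom (iii), given $X, Y \in \mcC_{\mcT}$, set $A := X^0 \cap Y^0$ and form the covector $V := X \circ Y$. Since both $X$ and $Y$ lie below $\mcT$, every nonzero entry of $V$ agrees with $\mcT$, so $V \leq \mcT$; and $V^0 = A$ by construction. Restricting all covectors $W$ with $V \leq W \leq \mcT$ to $E \setminus A$ yields a tope $\mcT_A$ of the contracted oriented matroid $\mcM / A$, which is uniform of rank $r - |A| \geq 2$ (since $X \neq Y$ forces $|A| \leq r-2$ by the injectivity argument below). Its vertices are in bijection with those $Z \in \mcC_{\mcT}$ satisfying $A \subseteq Z^0$, via $Z \leftrightarrow Z|_{E \setminus A}$. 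Invoking connectedness of the cocircuit graph of a tope---a consequence of the Topological Representation Theorem, since the closure of a tope is a regular cell decomposition of an $(r-1)$-ball whose $1$-skeleton is connected, or alternatively by induction on rank through contraction---we obtain a path from $X|_{E \setminus A}$ to $Y|_{E \setminus A}$ in the graph of $\mcT_A$, which lifts to the required sequence $X = Z_0, \ldots, Z_k = Y$ in $\mcC_{\mcT}$ with $A \subseteq Z_i^0$ at every step. Consecutive $Z_i$ and $Z_{i+1}$ are automatically adjacent in $G(\mcT)$, since an edge in the cocircuit graph of $\mcM/A$ between $Z_i|_{E \setminus A}$ and $Z_{i+1}|_{E \setminus A}$ corresponds precisely to $|Z_i^0 \cap Z_{i+1}^0| = r-2$ in $\mcM$.

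Finally, the map $\varphi \colon X \mapsto X^0$ gives the claimed graph isomorphism. It is surjective onto $\mcA$ by definition and injective because $X_1^0 = X_2^0$ with $X_1, X_2 \in \mcC_{\mcT}$ forces $X_2 = \pm X_1$ by (CC2), and the antipodal case is excluded since $X_1 \leq \mcT$ and $-X_1 \leq \mcT$ together with $\mcT^+ \cap \mcT^- = \emptyset$ are incompatible. For edges, observe that $X, Y \leq \mcT$ forces $X^{\pm}, Y^{\pm} \subseteq \mcT^{\pm}$, whence $S(X, Y) = \emptyset$ automatically. Adjacency in $G(\mcT)$ therefore reduces to $|X^0 \cap Y^0| \geq r - 2$ with $X \neq Y$, which by injectivity is equivalent to $|X^0 \cap Y^0| = r - 2$, matching the adjacency condition in $G_{abs}(\mcA)$. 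The main obstacle is axiom (iii): the arc-counting in axiom (ii) is clean once one notices the uniqueness of $U \leq \mcT$ with $U^0 = S$, but (iii) genuinely depends on the connectedness of a tope's cocircuit graph, which we plan to import from the Topological Representation Theorem or establish by a parallel induction on rank.
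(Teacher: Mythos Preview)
Your proof is correct, but takes a different route from the paper's in two of the three axioms. For axiom~(ii), the paper argues directly on the coline cycle: given one cocircuit $X \in \mcC_{\mcT}$ with $E' \subset X^0$, it identifies the two neighbors $Y_1, Y_2$ of $X$ on that cycle, notes that exactly one of them (say $Y_1$) lies below $\mcT$, and then rules out any third $Z \in \mcC_{\mcT}$ on the cycle by observing that $X, Y_1, Z$ would be pairwise adjacent on a simple cycle of length at least four. Your argument via the uniqueness of the arc $U \leq \mcT$ with $U^0 = S$ is cleaner and more conceptual: it identifies the relevant edge of the coline rather than reasoning about forbidden triangles. For axiom~(iii), the paper invokes the crabbed-path theorem of Felsner et al.\ \cite[Theorem~2.3]{Felsner-et-al}: any $X, Y \in \mcC^*$ are joined by a path in $G^*(\mcM)$ whose intermediate cocircuits $Z_i$ satisfy $Z_i^{\pm} \subseteq X^{\pm} \cup Y^{\pm}$, which immediately yields both $Z_i < \mcT$ and $X^0 \cap Y^0 \subseteq Z_i^0$. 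That single combinatorial citation delivers axiom~(iii) in one stroke. Your route through contraction to $\mcM/A$ and connectedness of the tope graph is also valid, but it trades one external input (crabbed paths) for another (the Topological Representation Theorem together with connectedness of the $1$-skeleton of the boundary of a regular CW ball), and the latter requires slightly more unpacking to make airtight---as you yourself flag at the end. The graph-isomorphism step is essentially identical in both proofs.
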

	
	\begin{proof} 
	
		%Consider, without loss of generality,  a tope $\mcT$, where $X^-=\emptyset$  for all  $X \in \mcC_{\mcT}$ and where $E = \{1,\ldots,n\}$.
		We show that  $ \mcA$ satisfies the three axioms of abstract polytopes: 
		\begin{enumerate}
			\item[(i)] Axiom (i) holds because  $\mcM$ is a uniform oriented matroid of rank $r$. 
			\item[(ii)] 	Let $E' \subset E$ such that $|E'|=d-1$, 
			and assume
			% w.l.o.g. 
			%that $E'=\{1, \ldots, d-1\}$; in addition, assume 
			that there exists $X \in\mcC_{\mcT}$  such that
		 $E' \subset X^0$ (otherwise, no vertex of $\mcA$ contains $E'$ and we are done). 
			 %The set of all cocircuits in $\mcC^*$ whose zero sets contain $E'$
			  Let $U=\{ W \in \mcM^* : E' \subset W^0\}$, then $U$ 
			  is a coline of $\mcM$ whose graph is a simple cycle. 
			 Let $Y_1,Y_2$ be the two adjacent cocircuits to $X$ in $U$. Then, there exists an element 
			 %$e \in \{d,d+1, \ldots, n\}$ 
			 $e \in E \setminus E'$
			 such that $S(Y_1,Y_2)=e$ and
			 $S(X, Y_i)=\emptyset \;(i=1,2)$, implying that exactly one of $Y_1,Y_2$,  say $Y_1$, is in $\mcT$. However, no other cocircuit in $U$ is in $\mcT$.
			Suppose, to the contrary, that there exists $Z \in U$, distinct from $X$ and $Y_1$ that belongs to $\mcT$. 
			Then by definition 
			$$|X^0 \cap Y^0_1| = |X^0 \cap Z^0| = |Y^0_1 \cap Z^0| = d - 1, \text{ and } S(X, Y_1) = S(X, Z) = S(Y_1, Z) = \emptyset.$$
			This means that $X$,  $Y_1$, and $Z$, are all adjacent on $U$.  As 
			the graph of $U$ is a simple cycle of size $2(n-r+1)$,	this leads to contradiction.			
				\item[(iii)] 
				%Let $G(\mcT)$  be the graph of  $\mcT$. 
				By \cite[Theorem 2.3]{Felsner-et-al}, for any 
				$X,Y \in \mcC^*$  there exists an $(X,Y) $  crabbed path on $G^*(\mcM)$. That is, there exists a path 
				$X=Z_0, Z_1, \ldots, Z_k=Y$ on $G^*(\mcM)$ such that $Z_i^+  \subseteq X^+ \cup Y^+$ and $Z_i^-  \subseteq X^- \cup Y^-$ for 
				all $0 \leq i \leq k$. This implies that 
				if $X,Y \in \mcC_{\mcT}$, then  for $i=1, \ldots, k-1$,
			  $ Z_i \in  \mcC_{\mcT}$ (as $Z_i <  \mcT $), so $Z^0_i \in \mcA$, and
			       $X^0 \cap Y^0 \subseteq  Z_i ^0$.
			   %  (as $S(X,Y)=\emptyset$) $S(X,Z_i)=S(Z_i,Y)=\emptyset$,
				Now, let $G(\mcT)$ be the
				graph of  $\mcT$. 
				Note that as $S(X,Y)=\emptyset$ for any 
			      $X,Y \in \mcC_{\mcT}$, $X$ and $Y$ share an edge on $G(\mcT)$ if and only if $|X^0 \cap Y^0|=d-1$.
				 However, the two vertices on $G_{abs}(\mcA)$ corresponding 
				to $X^0,Y^0$ are adjacent if and only if $|X^0 \cap Y^0|=d-1$. Thus, we conclude that 
				$G(\mcT)$ is isomorphic to $G_{abs}(\mcA)$, so Axiom (iii) is satisfied.	
				
						\end{enumerate}
				Note that by the proof of part (iii) above we have that the graph $ G(\mcT)$ of $\mcT$ is isomorphic to the graph $G_{abs}(\mcA)$ of $\mcA$.	
	                           \end{proof}
\section{Reductions and Lower Bounds}
\label{section:reductions}

For the ease of notation, let $OM(n,r)$ be the set of all oriented matroids of rank $r$ whose ground set has cardinality $n$.  Let $UOM(n,r)$ be the set of all uniform oriented matroids in $OM(n,r)$. Let $\Delta(n,r)$ denote the maximal diameter of $G^*(\mcM)$ as $\mcM$ ranges over $OM(n,r)$. Klee and Walkup \cite{Klee-Walkup} showed that the maximal diameter among all $d$-dimensional polytopes with $n$ facets is achieved by a simple polytope. Their argument was straightforward: if $P$ is a $d$-polytope with $n$ facets that is not simple, then slightly perturbing the facets of $P$ will produce a simple polytope whose diameter is at least as large as that of $P$. Our goal in this section is to prove an analogous result for oriented matroids. First we require some definitions, see \cite[Section 7.1 and 7.2]{OMBook} for more details.

%We begin with some constructions of oriented matroids. We would want to show that it suffices to prove Conjecture \ref{conj:mainproblem} for \textit{uniform} oriented matroids.
%We begin by showing that 

%For an oriented matroid the \emph{deletion} of  a subset $T\subset E$, written $\mcM \backslash T$, This is the oriented matroid whose cocircuits come from $\mcM$ but do not have support on $T$. The dual operation of restriction is contraction. In terms of the deletion we will recall the idea of extension.

Let $\mcM$ be an oriented matroid on ground set $E$. An \emph{extension} of $\mcM$ is an oriented matroid $\widetilde{\mcM}$ on a ground set $\widetilde{E}$ that contains $E$, such that the restriction of $\widetilde{\mcM}$ to $E$ is $\mcM$. We say $\widetilde{\mcM}$ is a \emph{single element extension} if $|\widetilde{E} \backslash E| = 1$. For any single element extension $\widetilde{\mcM}$, there is a unique way to extend cocircuits of $\mcM$ to cocircuits of $\widetilde{\mcM}$. Specifically, there is a function $$\sigma: \mcC^*(\mcM) \to \{+, -, 0\}$$ such that $\sigma(-Y) = -\sigma(Y)$ for all $Y \in \mcC^*(\mcM)$ and $$\{(Y, \sigma(Y)): \ Y \in \mcC^*(\mcM)\} \subseteq \mcC^*(\widetilde{\mcM}).$$ That is, $(Y, \sigma(Y))$ is a cocircuit of $\widetilde{\mcM}$ for every cocircuit $Y$ of $\mcM$. The functions $\sigma: \mcC^* \to \{+, -, 0\}$ that correspond to single element extensions are called \emph{localizations}. Furthermore, $\widetilde{\mcM}$ is uniquely determined by $\sigma$, with \begin{multline*}\mcC^*(\widetilde{\mcM}) = \{(Y, \sigma(Y)):\ Y \in \mcC^*(\mcM)\} \cup \\ \{(Y^1 \circ Y^2, 0):\ Y^1, Y^2 \in \mcC^*(\mcM), \sigma(Y^1) = -\sigma(Y^2) \neq 0, S(Y^1, Y^2) = \emptyset, \rho(Y^1 \circ Y^2) = 2\}. \end{multline*} Here $\rho$ is the rank function and $\circ$ is the composition of covectors.

%    \item Let $\widetilde{\mcM}$ be a single element extension of $\mcM$. Then for every cocircuit $Y\in \mcC^*(\mcM)$, there is a unique way to extend $Y$ to a cocircuit of $\widetilde{\mcM}$: there is a unique function $$\sigma: \mcC^*(\mcM) \to \{+, -, 0\}$$ such that $$\{(Y, \sigma(Y)):Y \in \mcC^*\} \subseteq \mcC^*(\widetilde{\mcM}),$$ that is, $(Y, \sigma(Y))$ is a cocircuit of $\widetilde{\mcM}$ for every cocircuit $Y$ of $\mcM$. Furthermore, this $\sigma$ satisfies $\sigma(-Y) = -\sigma(Y)$ for all $Y \in \mcC^*$.
%    \item $\widetilde{\mcM}$ is uniquely determined by $\sigma$, with \begin{multline*}\mcC^*(\widetilde{\mcM}) = \{(Y, \sigma(Y)): Y \in \mcC^*\} \cup \\ \{(Y^1 \circ Y^2, 0): Y^1, Y^2 \in \mcC^*, \sigma(Y^1) = -\sigma(Y^2) \neq 0, S(Y^1, Y^2) = \emptyset, \rho(Y^1 \circ Y^2) = 2\},    

Now we are ready to define the perturbation map on non-uniform oriented matroids. 

\begin{definition}\cite[Theorem 7.3.1]{OMBook} \label{PerturbationMap}
   Let $\mcM$ be an oriented matroid of rank $r \geq 2$ on $E$. If $f \in E$ is not a coloop, then $\mcM$ is a single element extension of a rank $r$ oriented matroid $\mcM_0 := \mcM \backslash f$, with localization $\sigma_f$. Let $\overline{W} \in \mcC^*(\mcM_0)$ be a cocircuit with $\sigma_f(\overline{W}) = 0$, meaning $W = (\overline{W}, 0)$ is a cocircuit of $\mcM$. Then the local perturbation $\mcM'$ of $\mcM$ can be defined as a single element extension of $\mcM_0$ with localization $$
\sigma_{LP}(\overline{Y}) = 
\begin{cases}
+ & \text{ if } \overline{Y} = \overline{W}, \\
- & \text{ if } \overline{Y} = -\overline{W}, \\
\sigma_f(\overline{Y}) & \text{otherwise}.
\end{cases}
$$ 
\end{definition}

We can now reduce the general diameter problem to the case of uniform oriented matroids, as promised by Lemma \ref{perturbation-lemma}.

%\begin{theorem} \label{thm:perturbation-diameter-bound}
%Let $\mcM$ be a non-uniform oriented matroid, and $\mcM'$ be the perturbed oriented matroid defined in Lemma \ref{PerturbationMap}, then $\diam(\mcM) \leq \diam(\mcM')$. If $\mcM$ is realizable, then the perturbed $\mcM'$ can also be made realizable. In particular, for all $n$ and $r$, the optimal bound $\Delta(n, r)$ is achieved by a uniform oriented matroid.
%\end{theorem}

\begin{proof} (of Lemma \ref{perturbation-lemma})

 Let $\mcM$ be a non-uniform oriented matroid. We may assume without loss of generality that, $\mcM$ does not have any loops, coloops or parallel elements since removing them will not affect the cocircuit graph of $\mcM$. Note that there exists $W \in \mcC^*(\mcM)$ with $|W^0| > r-1$. Pick an arbitrary $f \in W^0$. Let $\mcM_0 := \mcM \backslash f$ and let $\mcM'$ be the perturbed oriented matroid defined in Definition \ref{PerturbationMap}.  We will show $\diam(\mcM) \leq \diam(\mcM')$. In addition, if $\mcM$ is realizable, then we will show the perturbed $\mcM'$ can also be made realizable. From this, it will follow that for all $n$ and $r$, the optimal bound $\Delta(n, r)$ is achieved by a uniform oriented matroid.

\begin{figure}[h]
\centering
\begin{minipage}{.3\textwidth}
\includegraphics[width=.95\textwidth]{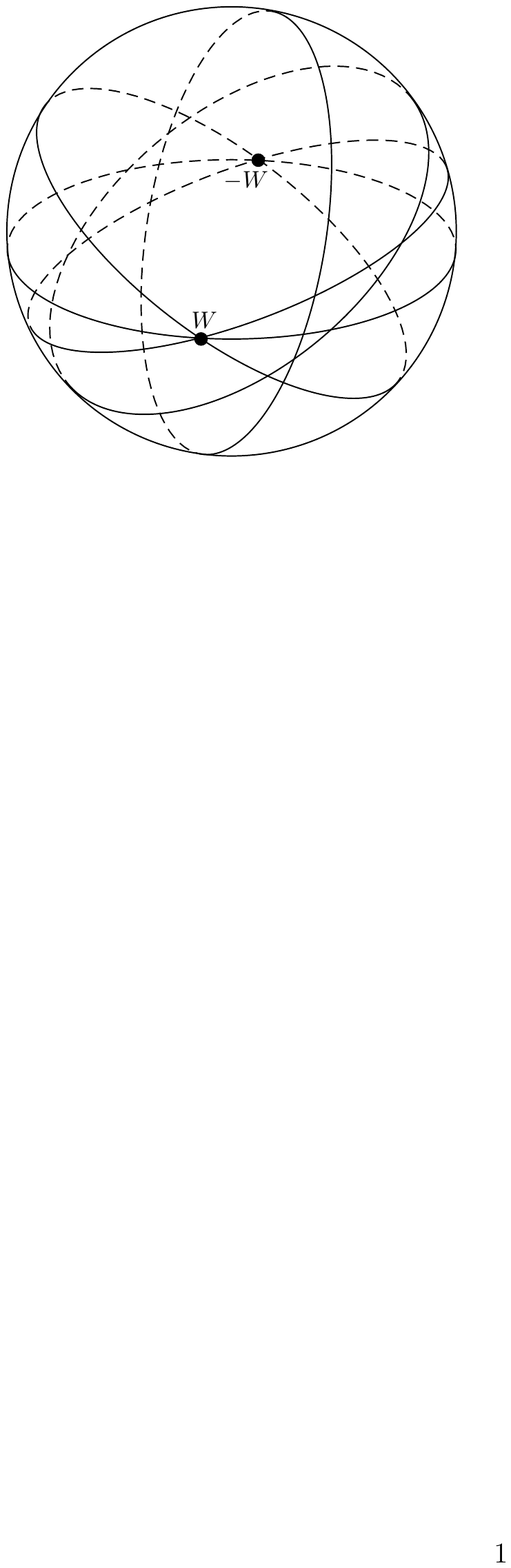}
\end{minipage}
\begin{minipage}{.3\textwidth}
\includegraphics[width=.95\textwidth]{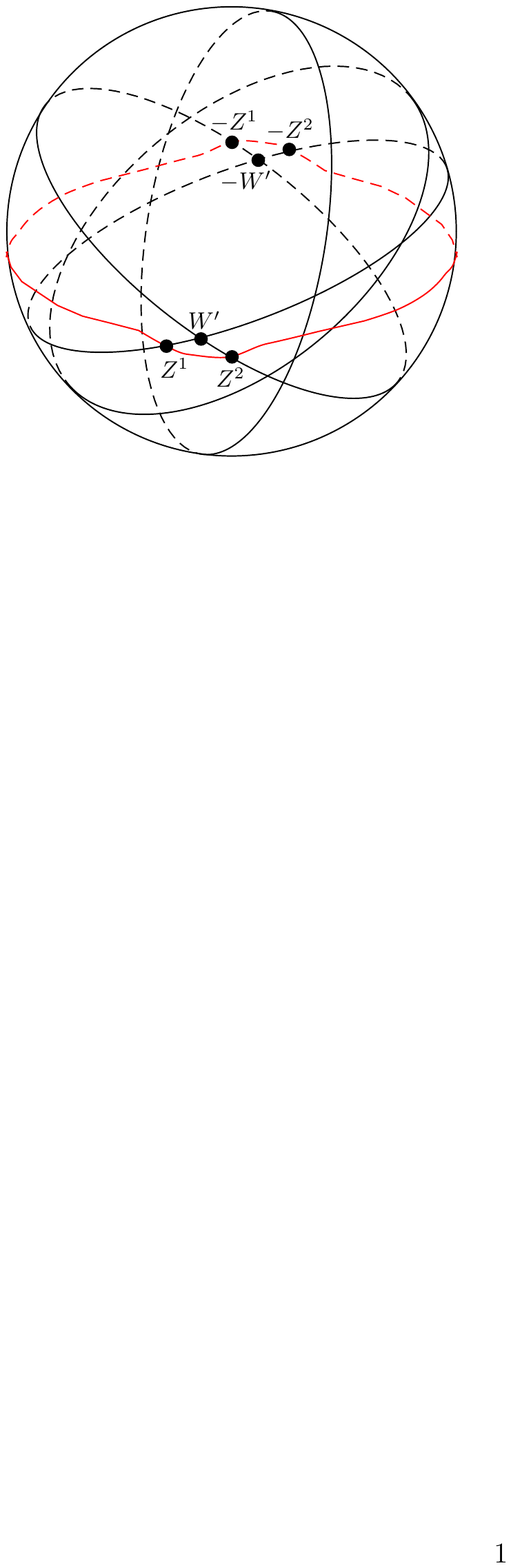}
\end{minipage}
\begin{minipage}{.3\textwidth}
\includegraphics[width=.95\textwidth]{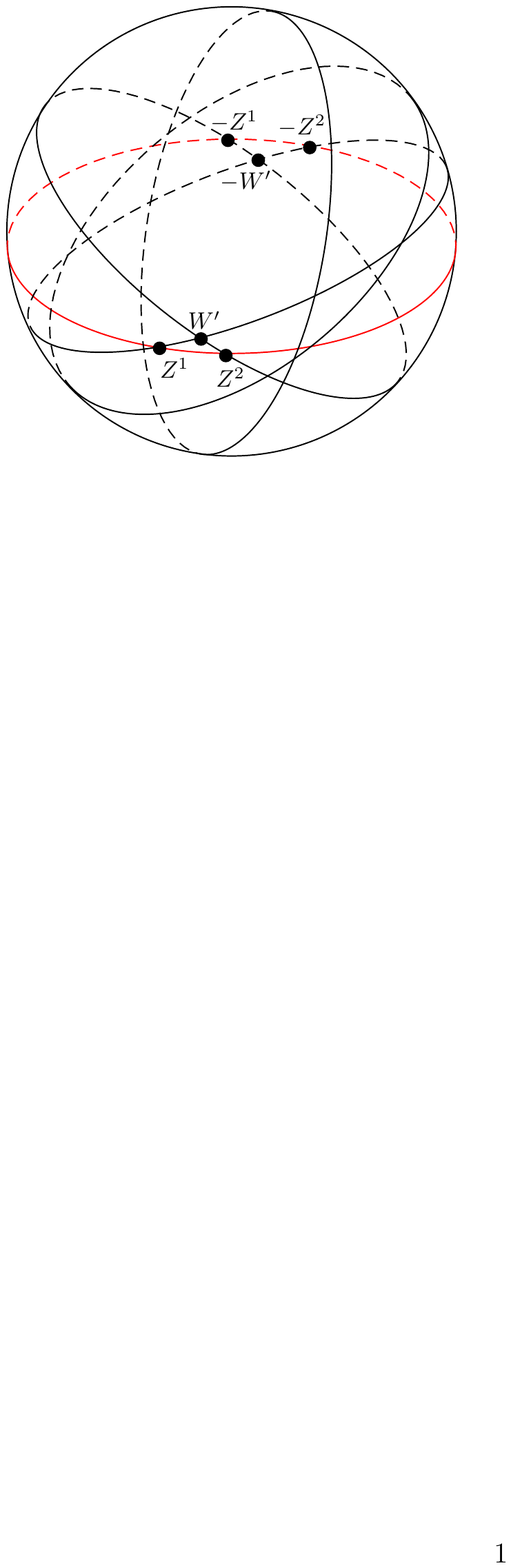}
\end{minipage}
\caption{A non-uniform oriented matroid (left), a local perturbation (center), and a realizable local perturbation (right).}
\label{fig:perturbation-figure}
\end{figure}

Denote by $\{X^1, X^2, \ldots, X^k\} = \{X \in \mcC^*(\mcM_0): \sigma_f(X) = -, \ S(\overline{W}, X) = \emptyset, \ \rho(\overline{W}, X) = 2\}$. Note that $X^1, \ldots, X^k$ are exactly the cocircuits that are adjacent to $\overline{W}$ in $G^*(\mcM_0)$  before the extension with $\sigma_f(X^i) = -$. Let $Z^i  = (X^i\circ \overline{W}, 0)$. After the perturbation by $\sigma_{LP}$, $W$ is mapped to $W'= (\overline{W}, +)$. Since $\sigma_{LP}$ and $\sigma_f$ only differ on $\pm \overline{W}$, it follows that $\pm Z^1, \ldots, \pm Z^k$ are all the cocircuits created by this perturbation. After the perturbation, each edge of the form $\{W,X^i\}$ in $G^*(\mcM)$ is subdivided into two edges $\{W’,Z^i\}$ and $\{Z^i,X^i\}$ (similarly $\{-W, -X^i\}$ is subdivided into $\{-W', Z^i\}$ and $\{-Z^i, -X^i\}$).

Now let $X,Y \in \mcC^*(\mcM)$ be any two cocircuits of $\mcM$ such that $X, Y \in \mcC^*(\mcM')$ ($X, Y$ could be $\pm W$, in this case we just consider $\pm W'$ in $\mcM'$). Take a minimal path between $X$ and $Y$ on $G^*(\mcM')$, and replace any elements of $\{ \pm W', \pm Z^1, \ldots, \pm Z^k\}$ with $\pm W$ respectively. This gives us a path (potentially having repeated elements and not necessarily shortest) between $X$ and $Y$ in $\mcM$. Now if we pick $X, Y \in \mcC^*(\mcM)$ that realize the diameter of $\mcM$, since $d_{\mcM}(X, Y) \leq d_{\mcM'}(X, Y)$, we have $\diam(\mcM) = d_{\mcM}(X, Y) \leq d_{\mcM'}(X, Y) \leq \diam(\mcM')$.

%For any path between $X$ and $Y$ on $\mcM'$, if it does not go through any of $\pm Z^1, \cdots, \pm Z^k$, then we can construct a path of the same length by replacing $\pm W'$ by $\pm W$. If the path contains some of $\pm Z^1, \ldots, \pm Z^k$, then it must contain a subsequence of ``entering" in one of $\pm W', \pm X^1, \ldots \pm X^k$ and ``exiting" on one of them. Without loss of generality, we may assume that it contains a subsequence $V_1, V_2, \ldots, V_m$ such that $ V_1, V_m \in \{W', X^1, \ldots, X^k\}$ and $V_2, \ldots, V_{m-1} \in \{ Z^1, \ldots, Z^k\}$. If one of $V_1$ and $V_m$ is $W'$, we remove all the $Z^1, \ldots,  Z^k$ in between and change $W'$ to $W$. If $V_1, V_m \in \{X^1, \ldots, X^k\}$, then we replace all the $Z^1, \ldots, \pm Z^k$ in between by $W$ (so that the sequence is now $V_1, W, V_m$). In this way we may remove all the $\pm Z^1, \ldots, \pm Z^k$ in the path, and retrieve a path in $\mcM$ that is at most as long as in $\mcM'$. Hence, $d_{\mcM}(X, Y) \leq d_{\mcM'}(X, Y)$. Now if we pick $X, Y \in \mcC^*(\mcM)$ that realize the diameter of $\mcM$, since $d_{\mcM}(X, Y) \leq d_{\mcM'}(X, Y)$, we have $\diam(\mcM) = d_{\mcM}(X, Y) \leq d_{\mcM'}(X, Y) \leq \diam(\mcM')$.

Now suppose $\mcM$ is realizable. Let $\mcH = \{H_1, \ldots, H_n\}$ be the hyperplane arrangement corresponding to $\mcM$ (with $f$ corresponding to $H_n$). Let $H_i = \{\vx: \vx^T\vv_i = 0\}$, and $\vw$ be the vector realizing $W$. Note that we have $\vw^T\vv_n = 0$ since the last entry of $W$ is 0.  Consider $\vy$,  the minimizer of $\vx^T\vv_n$ over all cocircuits of $\mcM$ subject to $\vx^T\vv_n>0$. Now we replace $H_n$ by $H_n'=\{\vx: \vx^T((1-\epsilon)\vv_n + \epsilon \vy) = 0\}$, in which the choice of $\epsilon$ will be made later. Note that, $$\vx^T((1-\epsilon)\vv_n + \epsilon \vy) = \vx^T\vv_n - \epsilon \vx^T\vv_n + \epsilon \vx^T \vy.$$ We first pick the sign of $\epsilon$ so that $\epsilon \vw^T \vy > 0$; as a result, $\vw \in H_n'^+$ and $-\vw \in H_n'^-$. Then we take $|\epsilon|$ small enough such that $|\vx^T\vv_n| > |\epsilon(\vx^T\vv_n - \vx^T\vw')|$ for all $\vx$ vectors that realize a cocircuit in $\mcM$ (this choice of $\epsilon$ exists since the number of cocircuits is finite and we may scale the vector). The construction ensures that all cocircuits, except those that lie on $H_n$ with degeneracy, will have the same sign as defined in Definition \ref{PerturbationMap}. As a result $\mcH' = \{H_1, \ldots, H_{n-1}, H_n'\}$ corresponds to some realizable oriented matroid $\mcM'$ after some local perturbations (the composition of perturbation maps on all cocircuits with degeneracy on $H_n$ (including $W$) as defined in Definition \ref{PerturbationMap}). 
 
To conclude, we have decreased the number of pairs of $(W, f)$  with $|W^0|> r-1$ and $W_f = 0$ without decreasing the diameter. By continuing this procedure, we will eventually obtain an oriented matroid in which no such pair of $(W, f)$ can be found, or equivalently $|X^0| = r - 1$ for all $X \in \mcC^*(\mcM)$. Hence $\Delta(n, r)$ will be achieved by a uniform oriented matroid.
\end{proof}

Hence it suffices to study uniform oriented matroids for the purpose of bounding $\Delta(n,r)$.  The bound in Conjecture \ref{conj:mainproblem} can be rewritten as $\Delta(n,r) \leq n-(r-1)+1$.  For polytopes, $n-(r-1)+1 = n-d+1$.  It may seem mysterious that the bound here is one more than the Hirsch bound, so we will pause for a moment to discuss this.  We begin by proving Lemma \ref{lem:distance-lower-bound} from the Introduction.

%\begin{lemma}
%Let $\mcM \in UOM(n,r)$, and let $X,Y \in \mcC^*(\mcM)$.  Then 
%$$
%d_{\mcM}(X,Y) \geq 
%\begin{cases}
%|S(X,Y)| + |X^0 \setminus Y^0| & \text{ if } X \neq -Y, \\
%n-r+2 & \text{ if } X = -Y.
%\end{cases}
%$$
%\end{lemma}

\begin{proof} (of Lemma \ref{lem:distance-lower-bound})

Recall that if cocircuits $Z$ and $W$ are adjacent in $G^*(\mcM)$, then there are elements $e \in Z^0 \setminus W^0$ and $e' \in W^0 \setminus Z^0$ such that $Z^0 = (W^0 \setminus \{e'\})\cup \{e\}.$ In other words, when we move from $Z$ to $W$, we see $Z_e = 0$ change to become $W_e \neq 0$ and $Z_{e'} \neq 0$ change to become $W_{e'} = 0$.  Therefore, we will say that each edge in $G^*(\mcM)$ encodes two ``basic transformations'', which are changes to the cocircuit that transform a nonzero entry into a zero entry or vice versa.

Now we consider the differences in the sign patterns of $X$ and $Y$.  For each $e \in S(X,Y)$ we require two basic transformations to move from $X$ to $Y$: one to transform $X_e$ to $0$, and another to transform $0$ to $-X_e = Y_e$.  For each $e \in X^0 \setminus Y^0$, we require one basic transformation to transform $0$ to $Y_e$.  Similarly, for each $e \in Y^0 \setminus X^0$, we require one basic transformation to transform $X_e$ to $0$.  Therefore, moving from $X$ to $Y$ requires at least $2|S(X,Y)| + |X^0 \setminus Y^0| + |Y^0 \setminus X^0| = 2|S(X,Y)| + 2|X^0\setminus Y^0|$ basic transformations.  Thus $d_{\mcM}(X,Y) \geq |S(X,Y)| + |X^0 \setminus Y^0|$.

Now we examine the case where $X=-Y$ more closely.  In this case, $S(X,Y) = \supp(X)$ and $X^0 = Y^0$.  Pick a shortest path from $X$ to $Y$ in $G^*(\mcM)$ and let $Z$ be the neighbor of $X$ on this path. Then $|S(Y,Z)| = n-r$ and $|Z^0 \setminus Y^0| = 1$, so $d_{\mcM}(Y,Z) \geq n-r+1$ by the above argument.  Therefore, $d_{\mcM}(X,Y) = 1 + d_{\mcM}(Y,Z) \geq n-r+2.$

%From this lemma, we see that the difference between the polytopal Hirsch conjecture and its analogue for oriented matroids cannot be avoided.  This lemma highlights the fact that antipodal cocircuits play a distinguished role in the study of cocircuit graphs, which we have already seen in the cocircuit axioms for oriented matroids (for example, axiom (CC3) requires $X \neq -Y$).   To further illustrate this point, we can show relatively easily that Conjecture \ref{conj:mainproblem} holds for antipodal cocircuits. 

%\begin{lemma}\label{lem:diam-on-pseudoS1}
%Let $\mcM \in UOM(n,r)$ and $E = E(\mcM)$. If $X,Y \in \mcC^*(\mcM)$ are cocircuits with $|X^0 \setminus Y^0| \leq1$. Then $d_{\mcM}(X,Y) = 1+|S(X,Y)|$. In particular, if $X=-Y$, then $|S(X,Y)| = n-r+1$ so $d_{\mcM}(X,Y) = n-r+2$.
%\end{lemma}

Next, consider the case $|X^0 \setminus Y^0| \leq 1$. We show that the equality holds for expression \eqref{eq:distbound}.

Let $A \subseteq X^0 \cap Y^0$ have cardinality $r-2$.  If $|X^0 \setminus Y^0| = 1$, then $A = X^0 \cap Y^0$; otherwise, $X=-Y$ and we can pick $r-2$ elements arbitrarily from $X^0 = Y^0$. Let $\{s_e : e \in E \}$ be the pseudospheres in the Folkman-Lawrence representation of $\mcM$ and let $S_A = \bigcap_{e \in A} s_e$.  Because $\mcM$ is uniform, we know $S_A \approx \bbS^1$. 

We saw above that in general $d_{\mcM}(X,Y) \geq 1+|S(X,Y)|$. On the other hand, the elements of $S(X,Y)$ are in bijective correspondence with cocircuits along the shortest path from $X$ to $Y$ in $S_A$. Indeed, if $Z$ is such a cocircuit, then $Z$ and $-Z$ are antipodal vertices on $S_A$, so they constitute a $0$-dimensional pseudosphere whose positive side contains one of $X$ or $Y$ and whose negative side contains the other. Thus the distance from $X$ to $Y$ on $S_A$ is exactly $1+|S(X,Y)|$. This proves $d_{\mcM}(X,Y) \leq 1+|S(X,Y)|$.
\end{proof}

One could hope that $d_{\mcM}(X,Y) \leq n-r+1$ provided $X,Y \in \mcC^*(\mcM)$ are not antipodal cocircuits.  However, this is not the case.  Matschke, Santos, and Weibel \cite{MSW} built on the methodology of Santos's original non-Hirsch polytope  \cite{Santos} to construct a simple polytope $P_{20,40}$ of dimension 20 with 40 facets which has diameter 21. Let $\mcM_{20,40}$ be the oriented matroid obtained by lifting $P_{20,40}$ into $\mathbb{R}^{21}$ and intersecting its hyperplane arrangement with the unit sphere. Since $P_{20,40}$ is simple, $\mcM_{20,40}$ is uniform, and one of its topes is $P_{20,40}$. We will show that the oriented matroid $\mcM_{20,40} \in UOM(40,21)$ has a pair of non-antipodal cocircuits $X$ and $Y$ such that $d_{\mcM_{20,40}}(X,Y) \geq 21 = n-r+2.$

%\begin{proposition}\label{santos-diameter}
%The oriented matroid $\mcM_{20,40} \in UOM(40,21)$ has a pair of non-antipodal cocircuits $X$ and $Y$ such that $d_{\mcM_{20,40}}(X,Y) \geq 21 = n-r+2.$
%\end{proposition}
\begin{proof}(of Proposition \ref{santos-diameter})
%    Let $Ax \leq b$ be the system of linear inequalities defining $P_{20,40}$. We lift the polytope one dimension higher, which gives us the following central hyperplane arrangement \[
%    \begin{bmatrix}
%    A & -\mathbf{1}^T    
%    \end{bmatrix}
%\begin{bmatrix}
%    \textbf{x} \\
%    x_{21}
%\end{bmatrix}
%=0
%    \]

 Let $X, Y$ be the pair of cocircuits that are of distance 21 in $P_{20,40}$. Let $E = \{1, \ldots, 40\}$. After reorientation and relabeling, we may assume that $X^0 = \{1, 2, \ldots, 20\}$, $X^+ = \{21, \ldots, 40\}$ and $Y^0 = \{21, \ldots, 40\}$, $Y^+=\{1, \ldots, 20\}$. 
    
Consider a shortest path, $\gamma$, from $X$ to $Y$ in $\mcM_{20,40}$. If each cocircuit on $\gamma$ belongs to the tope $P_{20,40}$, then its length is $21$. So we may suppose instead that $\gamma$ contains a cocircuit $Z$ that does not belong to $P_{20,40}$.  This means $Z^-\neq \emptyset$.

Recall the notion of a ``basic transformation'' from the proof of Lemma \ref{lem:distance-lower-bound}.  Each edge in the cocircuit graph accounts for two basic transformations, which change some entry on a cocircuit from $+/-$ to $0$ or from $0$ to $+/-$.  % $d_{\mcM_{20,40}}(X, Y) \geq 20 = |X^0 \setminus Y^0|$. 

Let $i \in Z^-$. If $X_i = +$ and $Y_i = 0$, then walking from $X$ to $Y$ via $Z$ requires at least $20+19+3 = 42$ basic transformations.  This is because each $j \in X^0$ requires one basic transformation to become an element of $Y^+$; each $j \in X^+ \setminus \{i\}$ requires one basic transformation to become an element of $Y^0$, and $i \in X^+$ requires two basic transformations to become an element of $Z^-$ and one additional transformation to subsequently become an element of $Y^0$.  Similarly, if $X_i = 0$ and $Y_i = +$, then walking from $X$ to $Y$ via $Z$ also requires at least $42$ basic transformations.  This tells us $d_{\mcM_{20,40}}(X,Y) \geq 21 = n-r+2.$
\end{proof}
%
%
%\question{Do we still want to keep this paragraph?}
%So that we can consider distances between non-antipodal cocircuits more broadly, we define  
%$\Delta^{\angle}(n,r):= \max\{d_{\mcM}(X,Y) \ : \ X,Y \in \mcC^*(\mcM) \text{ and } X \neq -Y\}$, where the maximum is taken over all oriented matroids $\mcM$ of rank $r$ whose ground set has $n$ elements.  Here, the $\angle$ symbol is meant to reflect one's intuition that if $X$ and $Y$ are not antipodal, then there is a nondegenerate angle between them.
%

\section{Results for small matroids} 
\label{section:smallmatroids}
%%%%%%%%%%

\subsection{Computer-based results for oriented matroids with few elements}\hfill

Finschi and Fukuda \cite{Fukuda-Finschi} computed the exact number of isomorphism classes of uniform oriented matroids, and gave a representative of each isomorphism class, when $n \leq 9$ and in small rank/corank when $n=10$.   We established Conjecture \ref{conj:mainproblem} for all of these examples using computers. 

 \begin{table}[!ht]
 \centering
\begin{tabular}{|c|c|c|c|c|c|c|c|c|c|}
\hline
              & $n = 2$     & $n = 3$ & $n = 4$ &  $n = 5$ & $n = 6$ & $n = 7$ & $n = 8$ & $n = 9$ & $n = 10$\\ \hline
$r = 2$  & 1 & 1 & 1 & 1 & 1 & 1 & 1 & 1 & 1  \\ \hline 
$r=3$ & \multicolumn{1}{l|}{} & 1 & 1 & 1 & 4 & 11 & 135 & 4382 & 312356 \\ \hline
$r = 4$ & \multicolumn{2}{l|}{} & 1 & 1 & 1 & 11 & 2628 & 9276595 & unknown \\ \hline
$r=5$ & \multicolumn{3}{l|}{} & 1 & 1 & 1 & 135 & 9276595 & unknown \\ \hline
$r=6$ & \multicolumn{4}{l|}{} & 1 & 1 & 1 & 4382 & unknown \\ \hline
$r=7$ & \multicolumn{5}{l|}{} & 1 & 1 & 1 & 312356 \\ \hline
$r=8$ & \multicolumn{6}{l|}{} & 1 & 1 & 1 \\ \hline
$r=9$ & \multicolumn{7}{l|}{} & 1 & 1 \\ \hline
$r=10$ & \multicolumn{8}{l|}{} & 1 \\ \hline
\end{tabular}
\caption{Number of uniform oriented matroids for $n\leq 10$. }
\label{table:smallmatroiddata}
  \end{table}

Each isomorphism class is encoded by its chirotope representation. Chirotopes, or basis orientations, are one of the equivalent axiomatic systems for oriented matroids (see \cite[Section 3]{OMBook} for more details). For a given oriented matroid on ground set $E$, the chirotope defines a mapping $\chi : E^{r} \to \{-, 0, +\}$. For a realizable oriented matroid with vector configuration $\{\vv_1, \ldots, \vv_n\}$, $$\chi(\lambda_1, \ldots, \lambda_{r}) = \sgn(\det(\vv_{\lambda_1}, \vv_{\lambda_2}, \ldots, \vv_{\lambda_{r}})).$$ The data can be found on Finschi and Fukuda's Homepage of Oriented Matroids \cite{small-OMs}. Given a chirotope map $\chi$ of an oriented matroid of rank $r$ on $E = \{1, 2, \ldots, n\}$, we can generate the cocircuits by computing the set $\mcC^*(\chi) = \{(\chi(\lambda, 1), \chi(\lambda, 2), \ldots , \chi(\lambda, n)): \lambda \in E^{r-1}\}$. Since $\mcM$ is uniform, we add an edge between $X, Y \in \mcC^*(\mcM)$ if and only if $|X^0 \cap Y^0| = r-2$ and $|S(X, Y)| = 0$. For $n=9$, $r=5$ and $n=10$, $r=7$, the chirotope maps are missing in the original dataset. However we can look at their duals ($n=9$, $r=4$ and $n=10$, $r=3$) and consider the set of circuits instead. Below is the pseudocode for computing the set of cocircuits and circuits.

\begin{algorithm}
\caption{Construct cocircuits given the chirotope map}
\hspace*{\algorithmicindent} \textbf{Input} Cardinality $n$, rank $r$ of $\mcM$ and $\chi$ the chirotope map\\
\hspace*{\algorithmicindent} \textbf{Output} A list containing all cocircuits $\mcC^*(\mcM)$
\begin{algorithmic}
\FOR{$A\subseteq [n]$ and $|A| = r - 1$}
\STATE Initialize $\vv = 0 \in \mathbb{R}^n$
\STATE Sort and vectorize $A$ to $\lambda$
 \FOR{$i= 1$ to $n$}
   \IF{ $i \not \in A$}
    \STATE $\vv[i] \leftarrow \chi(i, \lambda)$
   \ENDIF
 \ENDFOR
 \STATE Add $\pm \vv$ to the set of cocircuits
\ENDFOR
\end{algorithmic}
\end{algorithm}

\begin{algorithm}
\caption{Construct circuits given the chirotope map}
\hspace*{\algorithmicindent} \textbf{Input} Cardinality $n$, rank $r$ of $\mcM$ and $\chi$ the chirotope map\\
\hspace*{\algorithmicindent} \textbf{Output} a list containing all circuits $\mcC(\mcM)$
\begin{algorithmic}
\FOR{$A\subseteq [n]$ and $|A| = r - 1$}
\STATE Initialize $\vv = 0 \in \mathbb{R}^n$
\STATE Sort and vectorize $A$ to $\lambda$
 \FOR{$i= 1$ to  $n$}
   \IF{$i \in A$}
    \IF{$i = \min A$}
     \STATE $\vv[i] \leftarrow 1$
    \ELSE
     \STATE $\vv[i] \leftarrow -\chi(\min(A), \lambda) \times \chi(i, \lambda)$
    \ENDIF
   \ENDIF
 \ENDFOR
 \STATE Add $\pm \vv$ to the set of circuits
\ENDFOR
\end{algorithmic}
\end{algorithm}

 After finding all the cocircuits and edges, we used the Python NetworkX package \cite{NetworkX} to construct the cocircuit graph. This package has a method for computing the diameter of a graph, and also for determining the distance between any pairs of vertices. Table \ref{table:smallmatroiddata} shows the number of isomorphism classes (up to reorientation) of uniform oriented matroids of cardinality $n$ and rank $r$. We used a MacBook Pro with quad-core 2.2GHz Intel i7 processor, as well as UC Davis Math servers to construct the cocircuit graphs and compute their diameters. When $n = 9$, $r = 4, 5$ the algorithm takes the longest to terminate. On average, each instance of an oriented matroid takes about 0.36 seconds to compute, resulting in around 38.7 days to complete the checking of all oriented matroids of cardinality nine and rank four. 
 
We investigate other interesting questions such as whether the shortest path between two cocircuits on the same tope stays on the tope (see Section 5). Our code is available on Github.\footnote{\url{https://github.com/zzy1995/OrientedMatroid}} Based on our explicit computations, we derive the following theorem for small matroids, as promised in the introduction.
 
 \
 \begin{theorem}\label{thm:computational-result}
 Let $r \leq n \leq 9$ and $\mcM \in UOM(n, r)$, then $\diam(G^*(\mcM)) = n - r + 2$. Moreover, if $X,Y \in \mcC^*(\mcM)$ with $X \neq -Y$ and $n \leq 9$, then $d_{\mcM}(X, Y) \leq n-r + 1$.
 \end{theorem}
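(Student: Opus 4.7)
The plan is to verify both claims by reducing to an exhaustive computer check over one representative per isomorphism class of $UOM(n,r)$ for $r \leq n \leq 9$, and to appeal to Lemma \ref{lem:distance-lower-bound} for the matching lower bound. The lower bound is free: for any cocircuit $X$, the antipodal pair $(X,-X)$ satisfies $d_\mcM(X,-X) = n-r+2$ by Lemma \ref{lem:distance-lower-bound}, so $\diam(G^*(\mcM)) \geq n-r+2$ automatically. Consequently it suffices to establish the upper bounds $\diam(G^*(\mcM)) \leq n-r+2$ and $d_\mcM(X,Y) \leq n-r+1$ whenever $X \neq -Y$.

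For the upper bound I would loop over every isomorphism class cataloged by Finschi and Fukuda \cite{Fukuda-Finschi, small-OMs}; the counts are recorded in Table \ref{table:smallmatroiddata}. Each class is supplied as a chirotope $\chi$. From $\chi$ I reconstruct $\mcC^*(\mcM)$ using Algorithm~1, build $G^*(\mcM)$ by joining $X$ and $Y$ exactly when $|X^0 \cap Y^0| = r-2$ and $S(X,Y) = \emptyset$ (the simplified adjacency criterion valid under the uniform hypothesis), and then hand the graph to an all-pairs shortest-paths routine such as NetworkX \cite{NetworkX}. For the slot $(n,r)=(9,5)$, where the catalog stores only the dual, I would instead run Algorithm~2 to build $\mcC(\mcM)$ and then work with the dual oriented matroid, which is itself uniform of the desired rank. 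For each resulting graph I check (a) that $\diam(G^*(\mcM))$ equals $n-r+2$ and (b) that every non-antipodal pair of cocircuits is within distance $n-r+1$. Both properties are invariants of reorientation and matroid isomorphism, so processing a single representative per class suffices.

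The main obstacle is raw computational scale rather than anything conceptual: the slots $(n,r)=(9,4)$ and $(9,5)$ together contain roughly $1.9 \times 10^7$ isomorphism classes, and checking property (b) requires examining all $\binom{|\mcC^*(\mcM)|}{2}$ pairs instead of just the graph diameter, which is where the wall-clock cost concentrates. My mitigation plan is to parallelize across cores and servers, keep the chirotope-to-cocircuit conversion lean (it is linear in $\binom{n}{r-1}$), and, as a safeguard against the bugs endemic to proofs-by-computation, re-run a random sample with an independent implementation and manually inspect the cases that attain the bound $n-r+2$ (all of which must be antipodal by the lower bound already proved, giving an internal sanity check). A final minor issue is that any pair $X,Y$ meeting $d_\mcM(X,Y) = n-r+1$ with $X \neq -Y$ is worth recording as a certificate, so that the "$\leq n-r+1$" half of the claim can be spot-checked by hand in a few extremal instances.
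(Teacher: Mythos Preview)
Your proposal is correct and mirrors the paper's own approach essentially step for step: both rely on the Finschi--Fukuda catalog of isomorphism-class representatives, rebuild cocircuits from chirotopes via Algorithms~1 and~2 (with the dual workaround for $(n,r)=(9,5)$), assemble $G^*(\mcM)$, and compute all pairwise distances with NetworkX, invoking Lemma~\ref{lem:distance-lower-bound} for the matching lower bound. Your added remarks on isomorphism/reorientation invariance and on sanity-checking the computation are sound refinements but do not change the underlying strategy.
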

 \
%After investigating shortest paths between pairs of cocircuits for small oriented matroids, we have Conjecture \ref{London-Paris-Conj}. We will elaborate it in Section 5.

%%%%%%%%%%%
\subsection{Results in low rank}\hfill

As a next step, we explore Conjecture \ref{conj:mainproblem} in low rank.  
If $\mcM \in UOM(n,2)$, then the cocircuit graph $G^*(\mcM)$ is a cycle on $2n$ vertices, so its diameter is $n = n-r+2$.  Thus Conjecture \ref{conj:mainproblem} holds trivially when $r=2$. Now we move on to study uniform oriented matroids of rank three. 

\begin{theorem} \label{thm:Hirsch-UOM-rank3}
Let $\mcM \in UOM(n, 3)$, then $\diam(G^*(\mcM)) = n -r + 2 = n-1$.
\end{theorem}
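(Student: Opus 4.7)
The lower bound $\diam(G^*(\mcM)) \geq n-1$ is immediate from Lemma \ref{lem:distance-lower-bound}: for any cocircuit $X$, the antipodal distance equals $d_{\mcM}(X,-X) = n-r+2 = n-1$. For the matching upper bound, I plan to work in the Folkman--Lawrence representation. Because $\mcM$ is uniform of rank~3, this representation is an arrangement of $n$ pseudocircles $\{s_e : e \in E\}$ on $\bbS^2$ such that any two pseudocircles meet at exactly two antipodal points, no three share a point, and a cocircuit $X$ is precisely the vertex $s_e \cap s_f$ where $X^0 = \{e,f\}$. Each individual pseudocircle $s_e$ carries $2(n-1)$ vertices, and the subgraph of $G^*(\mcM)$ induced on these vertices is a simple cycle of length $2(n-1)$ (the coline through $e$). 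The engine of the proof is the following elementary fact about such a cycle: if $V$ is any vertex on $s_e$ and $\{Z,-Z\}$ is any antipodal pair on $s_e$, then
\[
d_{s_e}(V,Z) + d_{s_e}(V,-Z) = n-1,
\]
where $d_{s_e}$ denotes shortest-path distance along the coline cycle. This holds because $Z$ and $-Z$ split the cycle into two arcs of length $n-1$, and $V$ lies on one of them at distances $\alpha$ and $n-1-\alpha$ from its endpoints.

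Fix cocircuits $X,Y \in \mcC^*(\mcM)$. I would establish $d_{\mcM}(X,Y) \leq n-1$ by case analysis on $|X^0 \cap Y^0|$. The case $X = \pm Y$ is handled directly by Lemma \ref{lem:distance-lower-bound}. If $|X^0 \cap Y^0| = 1$ with shared element $e$, then $X$ and $Y$ both lie on the $2(n-1)$-cycle on $s_e$, and walking along the shorter arc yields a path of length at most $n-1$. The heart of the argument is the disjoint case $X^0 \cap Y^0 = \emptyset$: write $X^0 = \{a,b\}$ and $Y^0 = \{c,d\}$, let $\{Z,-Z\} = s_a \cap s_c$, and form two candidate paths in $G^*(\mcM)$. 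Path $P_1$ goes from $X$ to $Z$ along $s_a$ and then from $Z$ to $Y$ along $s_c$; path $P_2$ does the same via $-Z$. Applying the key identity once on $s_a$ (to $X$ and the antipodal pair $\{Z,-Z\}$) and once on $s_c$ (to $Y$ and the same pair, which are antipodal on $s_c$ as well), we obtain
\[
|P_1| + |P_2| = \bigl(d_{s_a}(X,Z) + d_{s_a}(X,-Z)\bigr) + \bigl(d_{s_c}(Z,Y) + d_{s_c}(-Z,Y)\bigr) = 2(n-1),
\]
so the shorter of $P_1$ and $P_2$ has length at most $n-1$.

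The main, and essentially only, point requiring care is confirming that in the disjoint case the two legs of $P_1$ and $P_2$ really are walks in $G^*(\mcM)$. Each leg consists of consecutive edges along a single pseudocircle: neighbors $W, W'$ on a coline automatically satisfy $|W^0 \cap (W')^0| = 1 = r-2$ and $S(W, W') = \emptyset$, so they are edges of $G^*(\mcM)$, and the junction $\pm Z$ is a legitimate vertex because it lies simultaneously on $s_a$ and $s_c$. One also needs $\pm Z \notin \{X,Y\}$ so that neither leg is degenerate; this is immediate from $X^0 \cap Y^0 = \emptyset$, which forces $b \neq c$ and $a \neq d$. Everything ultimately rests on the remark that in a uniform rank-3 oriented matroid any two pseudocircles intersect in a $0$-sphere, which is what produces the clean $2(n-1)$-cycle structure on each $s_e$ used above.
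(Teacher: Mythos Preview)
Your argument is correct and establishes the theorem, but it takes a different route from the paper's proof in the disjoint case. The paper restricts to the four pseudocircles $s_1,s_2,s_3,s_4$ (with $X^0=\{1,2\}$, $Y^0=\{3,4\}$), identifies the quadrilateral region $D=s_1^+\cap s_2^+\cap s_3^+\cap s_4^+$ with corners $X,W,Y,Z$ (where $W^0=\{1,3\}$, $Z^0=\{2,4\}$), and compares the two boundary paths $P_W$ and $P_Z$ around $D$; a topological lemma that each remaining $s_i$ meets $\partial D$ in at most two points gives $\ell(P_W)+\ell(P_Z)\le 2n-4$, hence one path has length at most $n-2$. You instead fix only two pseudocircles $s_a,s_c$, route through the antipodal pair $\{Z,-Z\}=s_a\cap s_c$, and use the purely combinatorial identity $d_{s_e}(V,Z)+d_{s_e}(V,-Z)=n-1$ on each coline cycle to get $|P_1|+|P_2|=2(n-1)$. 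Your approach is more elementary---it avoids the intersection-counting argument on $\partial D$---but the price is a weaker bound of $n-1$ rather than $n-2$ for non-antipodal pairs with $X^0\cap Y^0=\emptyset$. That sharper $n-2$ is exactly what the paper exploits in Corollary~\ref{cor:Hirsch-diam-contract-to-rank3} (and hence in Theorem~\ref{thm:Finschi-improved}); your version of Theorem~\ref{thm:Hirsch-UOM-rank3} would not by itself supply the input those later results need.
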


\begin{proof}
Let $\mcM \in UOM(n,3)$ and $X,Y \in \mcC^*(\mcM)$. If $X=-Y$, then $d_{\mcM}(X,Y) = n-r+2$ by Lemma \ref{lem:distance-lower-bound}.  If $|X^0 \setminus Y^0| = 1$, then $d_{\mcM}(X,Y) \leq n-r+1$ by Lemma \ref{lem:distance-lower-bound}. So we only need to consider the case that $|X^0 \setminus Y^0| \geq 2$.  But $|X^0| = |Y^0| = r-1 = 2$, so this means $X^0 \cap Y^0 = \emptyset$. 

Identify the elements of $E(\mcM)$ with $\{1,2,\ldots,n\}$.  Let $\mcP(\mcM)$ be the Folkman-Lawrence representation of $\mcM$ with pseudospheres $\{s_1,\ldots,s_n\}$. 

Without loss of generality we can assume $X^0 = \{1,2\}$ and $Y^0 = \{3,4\}$. Let $\mcM'$ denote the restriction of $\mcM$ to $\{1,2,3,4\} \subseteq E$.  The Folkman-Lawrence representation of $\mcM'$ is obtained from $\mcP(\mcM)$ by removing $s_i$ for all $i>4$. Up to relabeling and reorientation, there is only one uniform oriented matroid of rank three on four elements. We can further assume $X_3 = X_4 = Y_1 = Y_2 = +$. In particular, there are cocircuits $W$ and $Z$ such that $W^0 = \{1,3\}$, $Z^0 = \{2,4\}$, and $W_2 = W_4 = Z_1 = Z_3 = +$. Consider the region, $D = s_1^+ \cap s_2^+ \cap s_3^+ \cap s_4^+ \subseteq \mcP(\mcM)$.  This is the quadrilateral region bounded by cocircuits $X$, $Y$, $Z$, and $W$ in Figure \ref{fig:rank3-partial-drawing}.  

\begin{figure}[h]
\begin{center}
\includegraphics{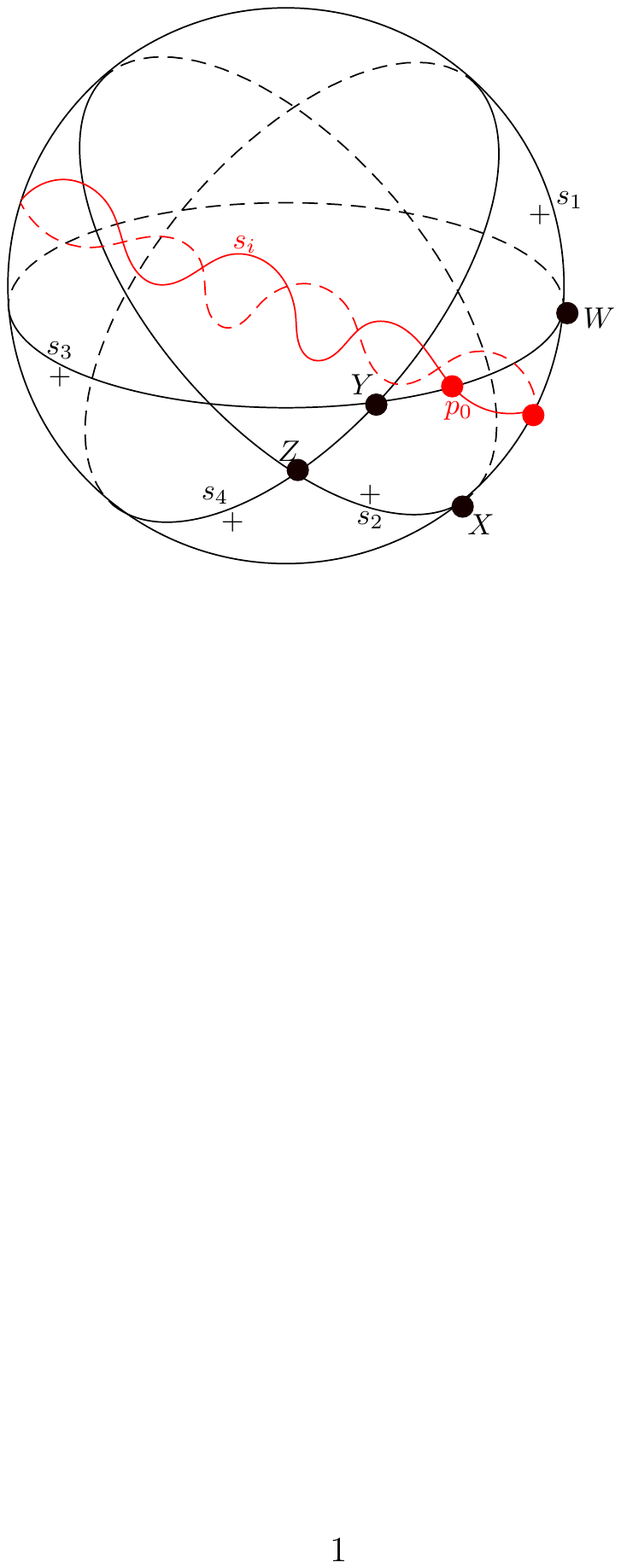}
\caption{The unique rank-$3$ pseudosphere arrangement with four pseudolines.}
\label{fig:rank3-partial-drawing}
\end{center}
\end{figure}

We claim that for each $i >4$, the pseudosphere $s_i$ can intersect the boundary of $D$ in at most two points.  Indeed, suppose $s_i$ intersects the boundary of $D$ at a point $p_0 \in s_j$ for some $j \in \{1,2,3,4\}$.  Because $\mcM$ is uniform, $p_0 \notin \{X,Y,Z,W\}$, so $s_j$ is unique. Let $\varphi_i: [0,1] \rightarrow \mcP(\mcM)$ be a parametrization of $s_i$.  We can assume $\varphi_i(0) = p_0$ and $\varphi_i(t)$ passes into the interior of $D$ for sufficiently small $t>0$.  Let $t_1$ be the next time when $\varphi_i(t_1)$ is on the boundary of $D$.  Assume $\varphi_i(t_1) \in s_k$.  Once again, $s_k$ is unique because $\mcM$ is uniform.  Further, $k \neq j$ because otherwise $s_j$ would intersect $s_i$ in at least four points: $\varphi_i(0)$, $\varphi_i(t_1)$, and their antipodes. 

When $t>0$ is sufficiently small, $\varphi_i(t) \in s_j^+ \cap s_k^+$.  When $t > t_1$ and $t-t_1$ is sufficiently small, $\varphi_i(t) \in s_j^+ \cap s_k^-$. By the definition of a pseudosphere arrangement, the image of $\varphi_i$ cannot cross back into $s_k^+$ before it crosses into $s_j^-$.  However, any other points where the image of $\varphi_i$ could intersect the boundary of $D$ lie in $s_j^+ \cap s_k^+$.  Thus $\varphi_i(0)$ and $\varphi_i(t_1)$ are the only points of intersection of $s_i$ with the boundary of $D$. 

Now we consider two paths from $X$ to $Y$ in $G^*(\mcM)$.  The first path $P_W$ travels from $X$ to $W$ along $s_1$, then from $W$ to $Y$ along $s_3$.  The second path $P_Z$ travels from $X$ to $Z$ along $s_2$, then from $Z$ to $Y$ along $s_4$.  Let $\ell(P_W)$ and $\ell(P_Z)$ denote the lengths of these paths. Initially, in $\mcM'$, $\ell(P_W) = \ell(P_Z) = 2$.

For each $i > 4$, the pseudosphere $s_i$ meets the boundary of $D$ in at most two points. This means $\ell(P_W) + \ell(P_Z)$ increases by at most two when we add $s_i$ back into $\mcP(\mcM)$.  Thus, in $\mcM$,
$$
\ell(P_W) + \ell(P_Z) \leq 4 + 2(n-4) = 2n-4.
$$
By the pigeonhole principle, either $\ell(P_W) \leq n-2$ or $\ell(P_Z) \leq n-2$,  so $d_{\mcM}(X,Y) \leq n-2$. 
\end{proof}

\begin{corollary} \label{cor:Hirsch-diam-contract-to-rank3}
Let $r \geq 3$ and $\mcM \in UOM(n,r)$. If $X,Y \in \mcC^*(\mcM)$ and $|X^0 \setminus Y^0| = 2$, then $d_{\mcM}(X,Y) \leq n-r+1$.
\end{corollary}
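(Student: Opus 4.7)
The plan is to reduce the corollary to Theorem \ref{thm:Hirsch-UOM-rank3} via a contraction. Note first that the hypothesis $|X^0 \setminus Y^0| = 2$ forces $X \neq -Y$ (otherwise $X^0 = Y^0$) and, since $|X^0|=|Y^0|=r-1$ in a uniform matroid, $|X^0 \cap Y^0| = r-3$. Set $A := X^0 \cap Y^0$. Because $\mcM$ is uniform, every subset of size at most $r$ is independent, so the contraction $\mcM/A$ is a uniform oriented matroid of rank $3$ on $n-(r-3) = n-r+3$ elements.

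Next I would pass cocircuits and adjacencies through the contraction. Since $A \subseteq X^0$ and $A \subseteq Y^0$, the restrictions $X' := X|_{E\setminus A}$ and $Y' := Y|_{E\setminus A}$ are cocircuits of $\mcM/A$, with $(X')^0 = X^0\setminus A$, $(Y')^0 = Y^0\setminus A$, both of size $2$, and $(X')^0 \cap (Y')^0 = \emptyset$. Because $X' = -Y'$ would force $X^0 = Y^0$ on $E\setminus A$ and together with both being $0$ on $A$ would give $X = -Y$, we have $X' \neq -Y'$ in $\mcM/A$. The key lifting observation is this: for cocircuits $Z,W$ of $\mcM$ with $A \subseteq Z^0 \cap W^0$, one has $S(Z,W) = S(Z',W')$ (both vanish on $A$) and $|Z^0 \cap W^0| = (r-3) + |(Z')^0 \cap (W')^0|$. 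Hence $Z,W$ are adjacent in $G^*(\mcM)$ iff $Z',W'$ are adjacent in $G^*(\mcM/A)$. Consequently any path in $G^*(\mcM/A)$ between $X'$ and $Y'$ lifts to a path of the same length in $G^*(\mcM)$ between $X$ and $Y$, giving
\[
d_{\mcM}(X,Y) \;\leq\; d_{\mcM/A}(X', Y').
\]

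Finally I would invoke the rank-three result. Inspecting the proof of Theorem \ref{thm:Hirsch-UOM-rank3}, the quadrilateral/pigeonhole argument actually shows that for a uniform rank-$3$ oriented matroid on $n'$ elements, every non-antipodal pair of cocircuits satisfies $d \leq n'-2$. Applied to $\mcM/A$ with $n' = n-r+3$, this yields
\[
d_{\mcM/A}(X', Y') \;\leq\; (n-r+3)-2 \;=\; n-r+1,
\]
and combined with the lifting bound gives $d_{\mcM}(X,Y) \leq n-r+1$, as desired. The only genuinely delicate point is verifying the adjacency correspondence between $G^*(\mcM)$ and $G^*(\mcM/A)$ on cocircuits containing $A$ in their zero set; everything else is a direct application of already-established results.
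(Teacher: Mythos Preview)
Your proposal is correct and follows essentially the same approach as the paper: contract by $A = X^0 \cap Y^0$ to obtain a uniform rank-$3$ oriented matroid on $n-r+3$ elements, observe that $X',Y'$ are non-antipodal there, and invoke the $n'-2$ bound for non-antipodal pairs established in the proof of Theorem~\ref{thm:Hirsch-UOM-rank3}. The paper phrases the lifting step geometrically via the Folkman--Lawrence representation (the cocircuit graph of $\mcM/A$ sits inside $G^*(\mcM)$ as the graph on $S_A$), whereas you verify the adjacency correspondence combinatorially, but the arguments are otherwise the same.
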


\begin{proof}
Let $A = X^0 \cap Y^0$.  Let $\{s_e \ : \ e \in E(\mcM)\}$ be the pseudospheres in the Folkman-Lawrence representation of $\mcM$ and let $S_A = \bigcap_{e \in A}s_e$.  Because $\mcM$ is uniform, $|A| = r-3$ and hence $S_A \approx \bbS^2$ is the Folkman-Lawrence representation of the uniform oriented matroid $\mcM/A \in UOM(n-r+3,3)$. 

Both $X$ and $Y$ are cocircuits on $S_A$ and clearly $X \neq -Y$, so by Theorem \ref{thm:Hirsch-UOM-rank3}, $$d_{\mcM}(X,Y) \leq d_{\mcM/A}(X,Y) \leq (n-r+3)-2 = n-r+1.$$ 

\end{proof}
Santos (personal communication) has pointed out that the proof of Theorem \ref{thm:Hirsch-UOM-rank3} cannot be directly extended to establish Conjecture \ref{conj:mainproblem} in rank four. For a realizable uniform oriented matroid of rank four, six hyperplanes will enclose a combinatorial cube.  For concreteness, we can consider the cube with $-1 \leq x_i \leq 1$ for all $i = 1,2,3$.

Figure \ref{paco-example} illustrates three edge-disjoint paths, colored red, green, and blue, from $(-1,-1,-1)$ to $(1,1,1)$.  Here, $(-1,-1,-1)$ is the vertex incident to the three dotted edges, and $(1,1,1)$ is its polar opposite.  The three images show slices of the cube by hyperplanes $x_i + x_j = (2-\varepsilon_k)x_k$ for all choices of $\{i,j,k\} = \{1,2,3\}$ and with $\varepsilon_1$, $\varepsilon_2$, and $\varepsilon_3$ all distinct. Each plane intersects two edges incident to $(-1,-1,-1)$ and two edges incident to $(1,1,1)$, and hence increases the total length of all three paths by at least four. If each of the remaining $n-6$ hyperplanes has one of the three illustrated types (with the $\varepsilon_k$ generic) then the total length of the red, blue, and green paths will be at least $4(n-6)+9$.  If there are approximately $\frac{n-6}{3}$ hyperplanes of each type, then each of the red, green, and blue paths will have length at least $\left\lfloor \frac{4}{3}n\right\rfloor -5$. 

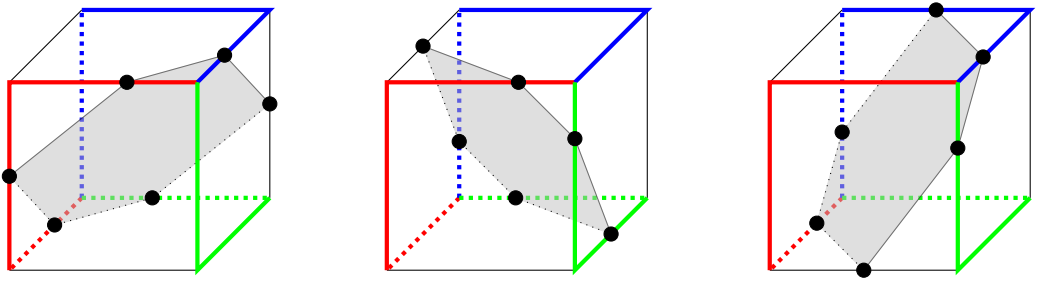
\begin{figure}[h]
\begin{center}
\begin{minipage}{.3\textwidth}
\begin{tikzpicture}[scale=1.25]
\draw[ultra thick, red, dotted] (-1,-1,-1) -- (-1,-1,1);
\draw[ultra thick, blue, dotted] (-1,-1,-1) -- (-1,1,-1);
\draw[ultra thick, green, dotted] (-1,-1,-1) -- (1,-1,-1);
\draw[fill=lightgray, lightgray, opacity=.5] (1,0,-1) --  (-.25,-1,-1) -- (-1,-1,-.25) -- (-1,0,1) -- (.25,1,1) -- (1,1,.25) -- (1,0,-1);
\draw[dotted] (1,0,-1) -- (-.25,-1,-1) -- (-1,-1,-.25) -- (-1,0,1);
\draw[opacity=.5]  (-1,0,1) -- (.25,1,1) -- (1,1,.25) -- (1,0,-1) ;
%\draw[fill=lightgray, opacity=.5] (1,-1,0) -- (-.4,-1,-1) -- (-1,-.4,-1) -- (-1,1,0) -- (.4,1,1) -- (1,.4,1) -- (1,-1,0);
%\draw[fill=lightgray, opacity=.5] (0,1,-1) -- (-1,-.3,-1) -- (-1,-1,-.3) -- (0,-1,1) -- (1,.3,1) -- (1,1,.3) -- (0,1,-1);
\draw (1,1,-1) -- (1,-1,-1);
\draw (1,-1,1) -- (-1,-1,1);
\draw (-1,1,1) -- (-1,1,-1);
\draw[ultra thick, red] (-1,-1,1) -- (-1,1,1) -- (1,1,1);
\draw[ultra thick, blue] (-1,1,-1) -- (1,1,-1) -- (1,1,1);
\draw[ultra thick, green] (1,-1,-1) -- (1,-1,1) -- (1,1,1);

\draw[fill=black] (-.25,-1,-1) circle (.075);
\draw[fill=black] (-1,-1,-.25) circle (.075);
\draw[fill=black] (.25,1,1) circle (.075);
\draw[fill=black]  (1,1,.25) circle (.075);
\draw[fill=black]  (1,0,-1) circle (.075);
\draw[fill=black]  (-1,0,1) circle (.075);
\end{tikzpicture}
\end{minipage}
\begin{minipage}{.3\textwidth}
\begin{tikzpicture}[scale=1.25]
\draw[ultra thick, red, dotted] (-1,-1,-1) -- (-1,-1,1);
\draw[ultra thick, blue, dotted] (-1,-1,-1) -- (-1,1,-1);
\draw[ultra thick, green, dotted] (-1,-1,-1) -- (1,-1,-1);
%\draw[fill=lightgray, opacity=.5] (1,0,-1) --  (-.25,-1,-1) -- (-1,-1,-.25) -- (-1,0,1) -- (.25,1,1) -- (1,1,.25) -- (1,0,-1);
\draw[fill=lightgray, lightgray, opacity=.5] (1,-1,0) -- (-.4,-1,-1) -- (-1,-.4,-1) -- (-1,1,0) -- (.4,1,1) -- (1,.4,1) -- (1,-1,0);
\draw[dotted] (1,-1,0) -- (-.4,-1,-1) -- (-1,-.4,-1) -- (-1,1,0);
\draw[opacity=.5] (-1,1,0) -- (.4,1,1) -- (1,.4,1) -- (1,-1,0);
%\draw[fill=lightgray, opacity=.5] (0,1,-1) -- (-1,-.3,-1) -- (-1,-1,-.3) -- (0,-1,1) -- (1,.3,1) -- (1,1,.3) -- (0,1,-1);
\draw (1,1,-1) -- (1,-1,-1);
\draw (1,-1,1) -- (-1,-1,1);
\draw (-1,1,1) -- (-1,1,-1);
\draw[ultra thick, red] (-1,-1,1) -- (-1,1,1) -- (1,1,1);
\draw[ultra thick, blue] (-1,1,-1) -- (1,1,-1) -- (1,1,1);
\draw[ultra thick, green] (1,-1,-1) -- (1,-1,1) -- (1,1,1);

\draw[fill=black] (1,-1,0) circle (.075);
\draw[fill=black]  (-1,1,0) circle (.075);
\draw[fill=black]  (-.4,-1,-1) circle (.075);
\draw[fill=black]  (.4,1,1) circle (.075);
\draw[fill=black]  (1,.4,1) circle (.075);
\draw[fill=black]  (-1,-.4,-1) circle (.075);
\end{tikzpicture}
\end{minipage}
\begin{minipage}{.3\textwidth}
\begin{tikzpicture}[scale=1.25]
\draw[ultra thick, red, dotted] (-1,-1,-1) -- (-1,-1,1);
\draw[ultra thick, blue, dotted] (-1,-1,-1) -- (-1,1,-1);
\draw[ultra thick, green, dotted] (-1,-1,-1) -- (1,-1,-1);
%\draw[fill=lightgray, opacity=.5] (1,0,-1) --  (-.25,-1,-1) -- (-1,-1,-.25) -- (-1,0,1) -- (.25,1,1) -- (1,1,.25) -- (1,0,-1);
%\draw[fill=lightgray, opacity=.5] (1,-1,0) -- (-.4,-1,-1) -- (-1,-.4,-1) -- (-1,1,0) -- (.4,1,1) -- (1,.4,1) -- (1,-1,0);
\draw[fill=lightgray, lightgray, opacity=.5] (0,1,-1) -- (-1,-.3,-1) -- (-1,-1,-.3) -- (0,-1,1) -- (1,.3,1) -- (1,1,.3) -- (0,1,-1);
\draw (1,1,-1) -- (1,-1,-1);
\draw (1,-1,1) -- (-1,-1,1);
\draw (-1,1,1) -- (-1,1,-1);

\draw[dotted] (0,1,-1) -- (-1,-.3,-1) -- (-1,-1,-.3) -- (0,-1,1);
\draw[opacity=.5] (0,-1,1) -- (1,.3,1) -- (1,1,.3) -- (0,1,-1);

\draw[ultra thick, red] (-1,-1,1) -- (-1,1,1) -- (1,1,1);
\draw[ultra thick, blue] (-1,1,-1) -- (1,1,-1) -- (1,1,1);
\draw[ultra thick, green] (1,-1,-1) -- (1,-1,1) -- (1,1,1);
\draw[fill=black] (0,1,-1) circle (.075);
\draw[fill=black] (0,-1,1) circle (.075);
\draw[fill=black] (-1,-.3,-1) circle (.075);
\draw[fill=black] (1,.3,1) circle (.075);
\draw[fill=black] (1,1,.3) circle (.075);
\draw[fill=black] (-1,-1,-.3) circle (.075);

\end{tikzpicture}
\end{minipage}
\end{center}

\caption{Hyperplanes $x_i+x_j = (2-\varepsilon_k)x_k$ slicing the $\pm1$ cube for $\{i,j,k\} = \{1,2,3\}$.}
\label{paco-example}
\end{figure}

%%%%%%%%%%
\subsection{Results in low corank}\hfill

Recall that the corank of an oriented matroid of rank $r$ on $n$ elements is equal to $n-r$.

\begin{theorem}\label{corank-k-theorem}
Let $\mcM$ be a uniform oriented matroid of corank $k$.  Then $$\diam(G^*(\mcM)) \leq \max \{\diam(G^*(\mcM')): \mcM' \in UOM(r'+k, r'), \ 2\leq r'\leq k+2\}.$$  
\end{theorem}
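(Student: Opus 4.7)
My plan is a contraction argument, exploiting the fact that contraction preserves corank. If $A \subseteq E(\mcM)$ is independent, then $\mcM/A$ has rank $r-|A|$ on $n-|A|$ elements, so its corank is still $k$. Because $\mcM$ is uniform, every subset of size at most $r$ is independent, so $\mcM/A$ is again uniform for any $A$ with $|A| \leq r-2$, giving $\mcM/A \in UOM((r-|A|)+k,\ r-|A|)$. I first dispose of the base case: if $2 \leq r \leq k+2$, then $\mcM$ itself belongs to the family on the right-hand side, and the bound is immediate.

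Now assume $r > k+2$, and choose $X, Y \in \mcC^*(\mcM)$ realizing $\diam(G^*(\mcM))$. Since $|X^0| = |Y^0| = r-1$ and $|X^0 \cup Y^0| \leq n = r+k$, inclusion-exclusion gives
$$
|X^0 \cap Y^0| \ \geq \ 2(r-1) - (r+k) \ = \ r - k - 2 \ > \ 0,
$$
which is exactly the slack I need: pick $A \subseteq X^0 \cap Y^0$ with $|A| = r-k-2$ and set $\mcM' := \mcM/A$, so that $\mcM' \in UOM(2k+2,\ k+2)$, which lies in the family on the right-hand side. The restrictions $X|_{E \setminus A}$ and $Y|_{E \setminus A}$ have zero sets of size $(r-1)-|A| = k+1 = \rank(\mcM') - 1$, so by uniformity they are cocircuits of $\mcM'$.

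The crucial step is to establish $d_{\mcM}(X, Y) \leq d_{\mcM'}(X|_{E\setminus A},\ Y|_{E\setminus A})$, which I would prove by lifting paths. A shortest path in $G^*(\mcM')$ promotes to a path of the same length in $G^*(\mcM)$ by padding each intermediate cocircuit with zeros on $A$: geometrically, $\mcM/A$ is represented by intersecting the pseudosphere arrangement of $\mcM$ with $S_A = \bigcap_{e \in A} s_e$, so cocircuits of $\mcM/A$ are exactly the cocircuits of $\mcM$ whose zero set contains $A$. Adjacency is preserved because if $Z', W'$ are neighbors in $G^*(\mcM')$, then $|(Z')^0 \cap (W')^0| \geq k$ and $S(Z', W') = \emptyset$, so the lifted cocircuits $Z, W$ satisfy $|Z^0 \cap W^0| \geq k + |A| = r-2$ and $S(Z, W) = \emptyset$. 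Chaining the inequalities,
$$
\diam(G^*(\mcM)) \ = \ d_{\mcM}(X, Y) \ \leq \ d_{\mcM'}(X|_{E\setminus A},\ Y|_{E\setminus A}) \ \leq \ \diam(G^*(\mcM')).
$$
The only subtle point is making the path-lifting rigorous without waving hands at the geometry; the Folkman-Lawrence picture makes it transparent, but one could also verify it directly from the axiomatic description of $\mcM/A$. The range $2 \leq r' \leq k+2$ in the statement is exactly what one lands in: either $\mcM$ itself already has rank in this interval, or contracting $r - k - 2$ common zeros of a diameter-realizing pair deposits us at rank $k+2$.
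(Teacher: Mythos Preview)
Your proof is correct and follows essentially the same contraction strategy as the paper: pick a diameter-realizing pair $X,Y$, contract elements common to $X^0$ and $Y^0$ to drop into a bounded-rank uniform oriented matroid of the same corank, and use that $G^*(\mcM/A)$ sits inside $G^*(\mcM)$ to compare distances. The only tactical difference is that the paper contracts \emph{all} of $X^0\cap Y^0$ (after disposing of the antipodal case $Y=-X$ via Lemma~\ref{lem:distance-lower-bound}), thereby landing at some $r'$ with $2\le r'\le k+2$, whereas you contract a subset of size exactly $r-k-2$ to land precisely at $r'=k+2$; your version has the mild advantage of not needing to treat $Y=-X$ separately.
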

\begin{proof}
Let $ \mcM$ be a uniform oriented matroid of corank $k$, and let $X, Y \in \mcC^*(\mcM)$ such that
$\diam(G^*(\mcM))=d_{\mcM}(X, Y)$. If  $Y =-X$, we are done,
since by, 
Lemma  \ref{lem:distance-lower-bound} the diameter of any uniform oriented matroid of corank $k$ is at least $k+2$, and 
 $d_{\mcM}(X, -X) = k+2$.
So we assume that $Y \neq -X$.

Consider the contraction  
$\mcM'=\mcM / (X^0 \cap Y^0)$, and let $X'$ and $Y'$ be the images of $X$ and $Y$ under this contraction. Let $r'=\rank(\mcM')$ and $n'=|E(\mcM')|$. We know that $\mcM'$ is uniform because $\mcM $ is.
Note that $(X')^0 \cap (Y')^0 = \emptyset$ by construction, so $\supp(X') \, \cup \, \supp(Y') = E(\mcM')$. In addition, since 
$\mcM'$ is uniform, $|\supp(X')|=|\supp(Y')|=k+1$. This shows $|E(\mcM')| \leq 2(k+1)$. Further, $\supp(X') \neq  \supp(Y')$ because 
$Y \neq -X$, so $|\supp(X') \, \cup \, \supp(Y') | \geq k+2$, which implies $2 \leq r' \leq k+1$, as $|E(\mcM')|=r'+k$.

Then, as
 $X', Y' \in \mcC^*(\mcM')$ and
 $G^*(\mcM')$ is a subgraph of  $G^*(\mcM)$, 
we have  $ \diam(G^*(\mcM))=d_{\mcM}(X, Y)\leq d_{\mcM'}(X', Y') \leq  \diam(G^*(\mcM'))$. Thus, we conclude that for every matroid $\mcM$ of corank $k$, there exists a matroid
$\mcM'\in UOM(r'+k, r')$,  where $2 \leq r'  \leq k + 2$,   such that  $\diam(G^*(\mcM)) \leq \diam(G^*(\mcM'))$. 
\end{proof}

\begin{theorem}\label{corank-4-theorem}
Let $ \mcM \in UOM(n,r)$ with  $n-r  \leq 4$.  Then  $\diam(G^*(\mathcal{M})) = n-r+2$.
\end{theorem}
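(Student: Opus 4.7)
The lower bound $\diam(G^*(\mcM)) \geq n - r + 2$ follows immediately from Lemma \ref{lem:distance-lower-bound} applied to any pair of antipodal cocircuits. For the upper bound, the plan is to invoke Theorem \ref{corank-k-theorem} with $k = n - r \leq 4$, which reduces the problem to proving $\diam(G^*(\mcM')) \leq k+2$ for every $\mcM' \in UOM(r'+k, r')$ with $2 \leq r' \leq k+2$. Since $k \leq 4$, this forces $r' \leq 6$ and $n' := r' + k \leq 10$.

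The bulk of these instances are already handled by results established earlier. When $r' = 2$ the graph $G^*(\mcM')$ is a cycle of length $2n'$; when $r' = 3$ the bound is Theorem \ref{thm:Hirsch-UOM-rank3}; and when $4 \leq r' \leq k+2$ with $n' \leq 9$, the bound is the computer-verified Theorem \ref{thm:computational-result}. The unique uncovered case is $(k, r', n') = (4, 6, 10)$, so it remains to show $\diam(G^*(\mcM')) \leq 6$ for every $\mcM' \in UOM(10, 6)$. Fix cocircuits $X, Y \in \mcC^*(\mcM')$ and set $m := |X^0 \setminus Y^0| \in \{0, 1, \ldots, 5\}$. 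For $m \leq 1$ (including the antipodal case $X = -Y$), Lemma \ref{lem:distance-lower-bound} gives $d_{\mcM'}(X, Y) \leq 6$; for $m = 2$, Corollary \ref{cor:Hirsch-diam-contract-to-rank3} gives $d_{\mcM'}(X, Y) \leq 5$. For $m \in \{3, 4\}$, take $A := X^0 \cap Y^0$ (of size $5-m$); then $\mcM'/A$ lies in either $UOM(8, 4)$ or $UOM(9, 5)$, both of which have diameter at most $6$ by Theorem \ref{thm:computational-result}. A routine check shows that adjacency in $G^*(\mcM'/A)$ coincides with adjacency in $G^*(\mcM')$ for cocircuits whose zero sets contain $A$, so any path in $G^*(\mcM'/A)$ lifts to a path of equal length in $G^*(\mcM')$, yielding $d_{\mcM'}(X, Y) \leq 6$.

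The main obstacle is the case $m = 5$, in which $X^0 \cap Y^0 = \emptyset$ and the contraction trick degenerates. Here $|X^0| + |Y^0| = 10 = |E|$ forces $X^0 \sqcup Y^0 = E$, and consequently $\supp(X) \cap \supp(Y) = \emptyset$ and $S(X, Y) = \emptyset$. Therefore $T := X \circ Y$ is a covector of full support---a tope of $\mcM'$---and both $X$ and $Y$ appear as vertices of $T$. By Lemma \ref{lem:tope_AP_equiv}, the graph of $T$ is isomorphic to the graph of a $5$-dimensional abstract polytope on $10$ elements, and I plan to conclude by invoking the Hirsch-type diameter bound for abstract polytopes with $n - d \leq 5$ due to Adler and Dantzig \cite{ad} (generalizing Klee--Walkup), which gives $d_T(X, Y) \leq 5$, and hence $d_{\mcM'}(X, Y) \leq d_T(X, Y) \leq 6$. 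The delicate point is confirming that the abstract-polytope Hirsch bound is genuinely available at $n-d = 5$ and applies to tope graphs produced by Lemma \ref{lem:tope_AP_equiv}; should this step fail, the lone $(n', r') = (10, 6)$ case can alternatively be closed by extending the computer-based verification that underlies Theorem \ref{thm:computational-result}.
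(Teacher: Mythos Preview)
Your proposal is correct and follows essentially the same route as the paper: reduce via Theorem~\ref{corank-k-theorem} and Theorem~\ref{thm:computational-result} to the single case $\mcM' \in UOM(10,6)$, dispose of the cases with $X^0 \cap Y^0 \neq \emptyset$ by contraction, and for $X^0 \cap Y^0 = \emptyset$ apply Lemma~\ref{lem:tope_AP_equiv} to the tope $X \circ Y$ together with the Adler--Dantzig abstract-polytope diameter result. Your hesitation about the last step is unnecessary: the paper invokes precisely \cite[Theorem~7.1]{ad}, which gives diameter $5$ for a $5$-dimensional abstract polytope on $10$ elements, so your fallback to a computer check is not needed.
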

\begin{proof} 
If  $n-r \leq 3$ the theorem  follows directly from 
Theorem \ref{corank-k-theorem} and Theorem \ref{thm:computational-result}.
                            
When $n-r =4$, by Theorem \ref{corank-k-theorem} we have $\Delta(r+4,r) \leq \max_{2 \leq r' \leq 6}\{\Delta(r'+4,r') \}$. 
However, by Theorem \ref{thm:computational-result}, for $ 2 \leq r' \leq 5$, $\max\{\Delta(r'+4,r')\} \leq r'+4-r'+2=6$. 
 So we only need to consider
$\mcM \in UOM(10,6)$.  Let $X,Y \in \mcC^*(\mcM)$ be such that $\diam(G^*(\mcM))=d_{\mcM}(X,Y)$. If $Y = -X$, the result holds by Lemma \ref{lem:distance-lower-bound}. If $X^0 \cap Y^0 \neq \emptyset$, then as in Theorem \ref{corank-k-theorem}, the contraction $\mcM' = \mcM/(X^0 \cap Y^0)$ satisfies $d_{\mcM}(X, Y)\leq \diam (\mcM')$. Since $|E(\mcM')| \leq 9$, the result holds by Theorem \ref{thm:computational-result}. So we may assume that $X^0 \cap Y^0 = \emptyset$.
%Without loss of generality, we may assume that $Y \neq -X$
%(as otherwise the theorem is proved by  Lemma \ref{lem:distance-lower-bound} and that $X^0 \cap Y^0 = \emptyset$ 
%(since otherwise,  as the contraction $\mcM/(X^0 \cap Y^0)$ has a ground set of cardinality  $\leq 9$,  the 
%theorem is proved by Theorem \ref{thm:computational-result}).

Define $\mcT=X \circ Y$. Then, by Lemma \ref{lem:tope_AP_equiv}
the graph $ G(\mcT)$
of $\mcT$ is isomorphic to the graph $G_A(\mcA)$ of $\mcA$, where $\mcA$ is the abstract polytope on the covector of $\mcT$
with dimension $5$ on 10 elements. However, 
by \cite[Theorem  7.1]{ad}   the diameter   of $G_A(\mcA)$ is $5$, implying  that
$d_{\mcM}(X,Y) = 5$.
Noting that  $d_{\mcM}(X,-X) =6$, we conclude that $\diam(G^*(\mathcal{M})) = 6$ which completes the proof. 
\end{proof}

Note that while the theorems about coranks in this subsection are for uniform oriented matroids, they are valid for general oriented matroids due to Lemma \ref{perturbation-lemma}. Now we are ready to combine all the results in this section to prove Theorem \ref{thm:lowrankcorank}.

\begin{proof} (of Theorem \ref{thm:lowrankcorank})

The proof of part (a) for small oriented matroids is in Theorem \ref{thm:computational-result}. The proof of part (b) for rank three oriented matroids is in Theorem \ref{thm:Hirsch-UOM-rank3}. The proof of part (c) for oriented matroids of corank no more than four is in Theorem \ref{corank-4-theorem}.
    
\end{proof}

%%%%%%%%%%%%%%%%%%%
%%%%%%%%%%%%%%%%%%%

\section{An Improved Quadratic Diameter Bound and Two Conjectures}
\label{section:quadraticbound}

Let $\mcM$ be an oriented matroid.  Recall that a \emph{coline} in $\mcM$ is a one-dimensional sphere in the Folkman-Lawrence representation of $\mcM$. Now we present an improved quadratic upper bound on $\Delta(n,r)$ for uniform oriented matroids, improving Theorem \ref{thm:Finschi-bound}. In particular, our Eq.~\ref{quadbound-part1} is tight for rank three.  Our proof relies on a modification of Finschi's proof \cite{Finschi-thesis}.

%\begin{theorem}\label{thm:Finschi-improved}
%Let $\mcM$ be an oriented matroid of rank $r$ on $n$ elements, and let $X,Y \in \mcC^*(\mcM)$ with $X \neq -Y$. Then 
%\begin{equation} \label{quadbound-part1}
%d_{\mcM}(X,Y) \leq n-r+1 + \sum_{k=2}^{|X^0 \setminus Y^0|-1}\left(\left \lfloor \frac{n-r-k}{2} \right\rfloor + 1\right).
%\end{equation}
%In particular, 
%\begin{equation} \label{quadbound-part2}
%d_{\mcM}(X,Y) \leq n-r+1 + \sum_{k=2}^{\min(r-2,n-r)} \left(\left\lfloor \frac{n-r-k}{2}\right\rfloor +1 \right).
%\end{equation}
%\end{theorem}

\begin{proof} (of Theorem \ref{thm:Finschi-improved})

By Lemma \ref{perturbation-lemma}, it suffices to consider the case that $\mcM$ is uniform. We prove the claim by induction on $|X^0 \setminus Y^0|$.  If $|X^0 \setminus Y^0| = 1$, then $d_{\mcM}(X,Y) \leq n-r+1$ by Lemma \ref{lem:distance-lower-bound}. If $|X^0 \setminus Y^0| = 2$, then $d_{\mcM}(X,Y) \leq n-r+1$ by Corollary \ref{cor:Hirsch-diam-contract-to-rank3}.

Now we move on to the inductive step.  Suppose $ |X^0 \setminus Y^0| = \ell \geq 3$. Pick any element $e \in Y^0 \setminus X^0$, and consider the coline $U$, with $U^0 = Y^0 \setminus \{e\}$. Note that $|U^0 \setminus X^0| = \ell-1$.

Now we look more carefully at the coline $U$, which is a cycle on $2(n-r+2)$ cocircuits. We distinguish $\ell$ pairs of these cocircuits.  For each element $f \in X^0 \setminus U^0$, there is a cocircuit $Z^f$ with $(Z^f)^0 = U^0 \cup \{f\}$.  Because $|X^0 \setminus U^0| = \ell$, there are $\ell$ such pairs of antipodal cocircuits, which we denote as $\pm Z^1, \ldots, \pm Z^\ell$ for simplicity.    

The cocircuits $Y$ and $-Y$ are antipodal on $U$, and hence partition $U$ into two halves, each of which contains $n-r+1$ cocircuits. Assume without loss of generality that  $Z^1,\ldots,Z^\ell$ all lie on one half of the coline (as it is partitioned by $Y$ and $-Y$), and further that $Z^1,\ldots,Z^\ell$ are ordered by their distance from $Y$, with $Z^1$ closest to $Y$ and $Z^\ell$  farthest. 

Because there are $(n-r+2)-(\ell+1) = n-r-\ell+1$ remaining pairs of antipodal circuits on $U$, and at most one element from each pair can lie on the arc from $Z^1$ to $-Z^\ell$ that contains $Y$, it follows that there exists a path of length at most $\left \lfloor \frac{n-r-\ell+1}{2} \right\rfloor + 1$ from $Y$ to one of $Z^1$ or $-Z^\ell$ along $U$. For simplicity, let $Z$ denote whichever of $Z^1$ and $-Z^\ell$ is closer to $Y$ along $U$. 

In summary, we have shown that there exists a cocircuit $Z$ whose distance to $Y$ is at most $\left\lfloor \frac{n-r-\ell+1}{2}\right\rfloor + 1$ with $|X^0 \setminus Z^0| = \ell-1$.  Because $\ell-1 \neq 0$, we know $Z \neq -X$ as well. The result now follows by induction, and after reindexing with $k=\ell-1$ we have
\begin{equation*} 
d_{\mcM}(X,Y) \leq n-r+1 + \sum_{k=2}^{|X^0 \setminus Y^0|-1}\left(\left \lfloor \frac{n-r-k}{2} \right\rfloor + 1\right).
\end{equation*}

 To get Eq.~\eqref{quadbound-part2}, note that $|X^0 \setminus Y^0| \leq \min (r-1,n-r+1)$, because $|X^0 \setminus Y^0| \leq |X^0| = r-1$ and $|X^0 \setminus Y^0| \leq |E\setminus Y^0| = n-r+1$. So, when $r \geq 4$ and $n-r \geq 2$, 
\begin{equation*} \label{quadbound-part2}
\diam(G^*(\mcM)) \leq n-r+1 + \sum_{k=2}^{\min(r-2,n-r)} \left(\left\lfloor \frac{n-r-k}{2}\right\rfloor +1 \right).
%\diam(G^*(\mcM)) \leq n-r+1 + \lfloor\frac{3}{\min(r,n-r+2)}\rfloor+ \sum_{k=2}^{\min(r-2,n-r)}\left(\left\lfloor \frac{n-r-k}{2}\right\rfloor+1\right).
\end{equation*}
%In summary, we have shown that there exists a cocircuit $Z$ whose distance to $Y$ is at most $\left\lfloor \frac{n-r-\ell+1}{2}\right\rfloor + 1$ with $|X^0 \setminus Z^0| = \ell-1$.  Because $\ell-1 \neq 0$, we know $Z \neq -X$ as well. The result now follows by induction, which establishes Eq.~\eqref{quadbound-part1} after reindexing with $k=\ell-1$.
%
% To get Eq.~\eqref{quadbound-part2}, note that $|X^0 \setminus Y^0| \leq \min (r-1,n-r+1)$, because $|X^0 \setminus Y^0| \leq |X^0| = r-1$ and $|X^0 \setminus Y^0| \leq |E\setminus Y^0| = n-r+1$. Thus, for any $X \neq -Y$, 
%\begin{eqnarray*}
%d_{\mcM}(X,Y) &\leq& n-r+1 + \sum_{k=2}^{\min(r-2,n-r)}\left(\left \lfloor \frac{n-r-k}{2} \right\rfloor + 1\right) 
%\end{eqnarray*}
%%&\leq& n-r+1 + \sum_{\ell=3}^{\min(r-1,n-r+1)}\left(\left \lfloor \frac{n-r-\ell+1}{2} \right\rfloor + 1\right) \\`
%%\mbox{and in particular,} \\
%%\diam(G^*(\mcM)) &\leq&  n-r+1 + \lfloor\frac{3}{\min(r,n-r+2)}\rfloor+ \sum_{k=2}^{\min(r-2,n-r)}\left(\left\lfloor \frac{n-r-k}{2}\right\rfloor+1\right).
%and in particular, when $r \geq 4$ and $n-r \geq 2$, Eq.~\eqref{quadbound-part2} holds.
%%\begin{equation*} \label{quadbound-part2}
%%\diam(G^*(\mcM)) \leq n-r+1 + \sum_{k=2}^{\min(r-2,n-r)} \left(\left\lfloor \frac{n-r-k}{2}\right\rfloor +1 \right).
%%%\diam(G^*(\mcM)) \leq n-r+1 + \lfloor\frac{3}{\min(r,n-r+2)}\rfloor+ \sum_{k=2}^{\min(r-2,n-r)}\left(\left\lfloor \frac{n-r-k}{2}\right\rfloor+1\right).
%%\end{equation*}
\end{proof}

%The bound in Eq.~\eqref{quadbound-part2} is tight when $r\leq 3$ or $n-r \leq 1$.

%%%%%%%%%%%%%%%%%%%
%%%%%%%%%%%%%%%%%%%
We end with the fascinating Conjecture \ref{London-Paris-Conj}. Let $\mcM$ be a uniform oriented matroid. We say a path $X^1, X^2, \ldots, X^k$ in the cocircuit graph $G^*(\mcM)$ \textit{stays on a tope} $\mcT$ if each cocircuit $X^i$ is a vertex of $\mcT$. If Conjecture \ref{London-Paris-Conj} were true, then Theorem \ref{thm:Finschi-improved} would imply a quadratic upper bound on the diameter of any polytope, proving the polynomial Hirsch Conjecture. The famous polynomial Hirsch conjecture states that the diameter of all convex polytopes is bounded by a polynomial in terms of the number of facets and the dimension (see \cite{Santos-recent}).  A modification of the computer code used in Section \ref{section:smallmatroids} shows that Conjecture \ref{London-Paris-Conj} holds for all uniform oriented matroids with at most nine elements. The code is included on Github as well.

%Similarly, if Conjecture \ref{London-Paris-Conj} is true, then the diameter of any tope is smaller or equal than the polynomial (quadratic!) bound we presented in Theorem \ref{thm:Finschi-improved}, thus a  quadratic bound for the diameter of polytopes would hold too.
%
A. Adler has pointed out that the analogous conjecture to Conjecture \ref{London-Paris-Conj} for polytopes is false: it is possible to have two vertices on a common facet while the shortest path between them leaves the facet. It is interesting to note that it has been shown  in \cite{Eisenbrandetal2010} some abstractions of convex polytopes provide an almost quadratic lower bound, but we do not know of any possible connections to oriented matroids. The validity of Conjecture \ref{London-Paris-Conj} implies a quadratic bound on the diameter of all polyhedra, while the validity of Conjecture \ref{conj:mainproblem} gives a linear diameter to all oriented matroids. However, Conjecture \ref{conj:mainproblem} and Conjecture \ref{London-Paris-Conj} cannot both be true. 
 
 \begin{proposition}
 Conjecture \ref{conj:mainproblem} and Conjecture \ref{London-Paris-Conj} cannot both be true for all $n$ and $r$.
 \end{proposition}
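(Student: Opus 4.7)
The plan is to assume both conjectures hold for all $n, r$ and derive a contradiction from a sufficiently high-dimensional Matschke--Santos--Weibel non-Hirsch polytope. My key geometric observation is the following: if $X, Y$ are cocircuits of a uniform oriented matroid $\mcM$ satisfying $X^- = Y^- = \emptyset$, $X^0 \cap Y^0 = \emptyset$, and $X^0 \cup Y^0 = E(\mcM)$, then the only tope of $\mcM$ containing both $X$ and $Y$ is the all-positive tope. Indeed, any tope $\mcT$ with $X, Y < \mcT$ must satisfy $\mcT_e = X_e = +$ for every $e \in \supp(X) = Y^0$ and $\mcT_e = Y_e = +$ for every $e \in \supp(Y) = X^0$; since $X^0 \cup Y^0 = E$, this forces $\mcT_e = +$ for every $e \in E$, so the tope is uniquely determined.

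Next, I would invoke the MSW construction \cite{MSW} in dimension $d$ sufficiently large: a simple $d$-polytope $P$ with exactly $n = 2d$ facets, diameter strictly greater than $d + 1 = n - d + 1$, and a distant vertex pair $X, Y$ whose facet sets are complementary. The strict diameter inequality follows from the MSW quantitative bound of order $21d/20$ as soon as $d$ is large enough (e.g., $d \ge 40$, where $\lfloor 21d/20 \rfloor \ge d+2 > d+1$); the complementary-facets property is a structural feature of the prismatoid-based construction, since the distant pair arises from vertices lying on the two disjoint base facets of the underlying prismatoid. Lifting $P$ to the central hyperplane arrangement in $\bbR^{d+1}$ gives a realizable, uniform (because $P$ is simple) oriented matroid $\mcM$ of rank $r = d+1$ on $n = 2d$ elements, in which $P$ is (up to reorientation) the all-positive tope and $X, Y$ become cocircuits satisfying the three hypotheses above; hence $P$ is the unique tope of $\mcM$ containing both of them.

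Applying the two conjectures then produces the contradiction. Conjecture~\ref{London-Paris-Conj} provides a tope $\mcT$ containing $X, Y$ with $d_\mcT(X,Y) = d_\mcM(X,Y)$, and the uniqueness above forces $\mcT = P$, giving $d_P(X,Y) = d_\mcM(X,Y)$; Conjecture~\ref{conj:mainproblem} then bounds $d_\mcM(X,Y) \le n-r+2 = d+1$, so combining yields $d_P(X,Y) \le d+1$, contradicting the MSW lower bound $d_P(X,Y) > d+1$. The main obstacle is polytope-theoretic rather than oriented-matroid-theoretic: rigorously verifying that an MSW-type construction simultaneously provides the strict diameter excess $d_P(X,Y) > d+1$ \emph{and} the complementary-facets property $X^0 \cap Y^0 = \emptyset$ for the same distant pair. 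Both features are present in the prismatoid-based lifting, but the precise correspondence between the iterated construction's facets and the indices of the distant pair must be extracted from \cite{MSW} (or from an analogous Santos-style iteration) to make this step airtight.
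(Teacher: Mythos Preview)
Your approach is essentially the same as the paper's: both start from a Santos/MSW non-Hirsch simple polytope, lift it to a uniform oriented matroid, and derive a contradiction between the two conjectures. The paper's proof is much terser---it cites \cite[Theorem 1.8]{Santos} to obtain a simple polytope $P$ with diameter at least $n-d+4=n-r+3$, and then simply asserts that Conjecture~\ref{London-Paris-Conj} forces $\diam(G^*(\mcM))\ge n-r+3$, which collides with Conjecture~\ref{conj:mainproblem}.

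You have in fact isolated the step the paper leaves implicit: from Conjecture~\ref{London-Paris-Conj} one only gets $d_\mcM(X,Y)=d_\mcT(X,Y)$ for \emph{some} tope $\mcT$, and one needs $\mcT=P$ to conclude. Your observation that $X^0\cap Y^0=\emptyset$ forces $\mcT$ to be unique (since any tope covering both $X$ and $Y$ is determined on $\supp(X)\cup\supp(Y)=E\setminus(X^0\cap Y^0)=E$) is exactly the missing ingredient, and your proof is in that sense more complete than the paper's. Note incidentally that the hypothesis $X^0\cup Y^0=E$ is redundant: disjointness of $X^0$ and $Y^0$ already gives $\supp(X)\cup\supp(Y)=E$, which is all you use.

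Regarding the obstacle you flag: the Santos polytopes of \cite[Theorem~1.8]{Santos} are built from spindles (equivalently, prismatoids) with $n=2d$ facets, and by construction the two ``poles'' of a spindle are vertices lying on complementary sets of $d$ facets each; the length of any path between them is at least the spindle's diameter. So the complementary-facets property for the distant pair is a structural feature of the construction, not something that needs to be extracted separately---this is also why the paper earlier (proof of Proposition~\ref{santos-diameter}) can assume $X^0=\{1,\dots,20\}$ and $Y^0=\{21,\dots,40\}$ without further comment. With that in hand your argument goes through.
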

 
 \begin{proof}
 Santos \cite[Theorem 1.8]{Santos} gave examples of simple polytopes of diameter at least $\frac{21}{20}(n-d)$ when $n$ and $d$ are sufficiently large.  In particular, there is eventually a simple polytope $P$ with diameter at least $n-d+4$.  Let $\mcM$ be its corresponding uniform oriented matroid. If Conjecture \ref{London-Paris-Conj} were true, then $\diam(G^*(\mcM)) \geq n-r+3$.  Similarly, if Conjecture \ref{conj:mainproblem} were true, then $G^*(\mcM)$ would necessarily have a shorter path than the one on tope $P$ between the vertices at distance $n-d+4 = n-r+3$. Thus, both conjectures cannot hold simultaneously or we reach a contradiction.
 \end{proof}
 
 We conclude with a strengthening of Conjecture 1.9 using crabbed paths.

 \begin{conjecture}\label{Ilan-stronger-Conj}
Let $\mcM$ be a uniform oriented matroid, and let $X,Y \in \mcC^*(\mcM)$ be cocircuits such that $S(X,Y)=\emptyset$.
Then, there exists a crabbed path from $X$ to $Y$ whose length is no bigger than the length of any path from $X$ to $Y$ in $\mcM$.
\end{conjecture}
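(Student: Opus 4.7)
The plan is to induct on $d := d_{\mcM}(X,Y)$ and convert a hypothetical non-crabbed shortest path into a crabbed one by local pivot exchanges. Writing $T := X \circ Y$ (well-defined because $S(X,Y) = \emptyset$), a cocircuit $W$ is crabbed with respect to $(X,Y)$ exactly when $W \leq T$ in the covector partial order, so the goal is to realize $d_{\mcM}(X,Y)$ by a path entirely below $T$. Existence of \emph{some} crabbed path is known (\cite[Theorem 2.3]{Felsner-et-al}, as used in the proof of Lemma~\ref{lem:tope_AP_equiv}); the new content is to control its length.

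The base case $d = 1$ is immediate. For the inductive step I would fix a shortest path $X = Z^0, Z^1, \ldots, Z^d = Y$. If $Z^1 \leq T$, then $S(Z^1,Y) = \emptyset$ (since $(Z^1)^+ \cap Y^- \subseteq T^+ \cap Y^- = \emptyset$ and symmetrically), so the inductive hypothesis applied to the pair $(Z^1, Y)$ produces a crabbed path of length $d-1$ and we prepend the edge $XZ^1$. Otherwise $Z^1 \not\leq T$: letting $\{e\} = X^0 \setminus (Z^1)^0$ and $\{f\} = (Z^1)^0 \setminus X^0$, the pivot from $X$ to $Z^1$ inserts $e$ with the wrong sign, i.e.\ $(Z^1)_e = -T_e$. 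I would then examine the coline $U$ with $U^0 = X^0 \cap (Z^1)^0$, whose graph is a cycle containing $X$ and $Z^1$ as adjacent vertices, and try to locate an alternative neighbor $Z^* \leq T$ of $X$ with $d_{\mcM}(Z^*, Y) \leq d-1$, from which induction finishes the job. The existence of $Z^*$ should be pushed through a coline-counting argument in the spirit of the rank-three proof (Theorem~\ref{thm:Hirsch-UOM-rank3}), combined with the cocircuit elimination axiom (CC3) applied to $Z^1$ and $Z^2$ to manufacture a crabbed witness cocircuit adjacent to $X$.

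The main obstacle is exactly this existence step: \emph{a priori} every crabbed neighbor of $X$ could have distance $d$ or more from $Y$, so any shortest path would be forced to detour outside $T$, and ruling that out from purely local (single-pivot) data is the heart of the conjecture. A complementary line of attack is to contract on $A := X^0 \cap Y^0$, under which $T$ becomes a tope in $\mcM/A$ and Conjecture~\ref{Ilan-stronger-Conj} reduces to Conjecture~\ref{London-Paris-Conj}; this reduction is contingent on the auxiliary equality $d_{\mcM}(X, Y) = d_{\mcM/A}(X', Y')$, which I would establish first via the basic-transformation pigeonholing of Lemma~\ref{lem:distance-lower-bound}. Finally, since Conjectures~\ref{conj:mainproblem} and~\ref{London-Paris-Conj} are mutually incompatible at sufficiently large $n,r$, any proof must delicately characterize which excursions outside $T$ admit an in-tope shortcut and which do not, and I expect this step to require genuinely new structural input beyond the coline and perturbation techniques already exploited in the paper.
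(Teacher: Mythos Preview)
The statement you are attempting to prove is presented in the paper as an open \emph{conjecture}; the authors offer no proof and explicitly say they ``do not have as much evidence to support Conjecture~\ref{Ilan-stronger-Conj}.'' There is therefore nothing in the paper to compare your argument against, and a complete proof would be a new result.

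Your outline is candid about this and correctly isolates the crux: once the first step $Z^1$ of a shortest path leaves the tope $T = X \circ Y$, you need a crabbed neighbor $Z^*$ of $X$ with $d_{\mcM}(Z^*,Y) \leq d-1$, and you concede you cannot produce one. Two of your proposed repairs do not work as written. First, the reduction via contraction runs in the wrong direction: the paper shows Conjecture~\ref{Ilan-stronger-Conj} implies Conjecture~\ref{London-Paris-Conj}, not conversely, and your route requires the equality $d_{\mcM}(X,Y) = d_{\mcM/A}(X',Y')$, which Lemma~\ref{lem:distance-lower-bound} cannot give (that lemma supplies only lower bounds, whereas here you must rule out that a shortest path in $G^*(\mcM)$ shortcuts through cocircuits outside the subgraph $G^*(\mcM/A)$). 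Second, applying (CC3) to $Z^1$ and $Z^2$ is vacuous because adjacent cocircuits have $S(Z^1,Z^2)=\emptyset$, so no elimination is available; and even if one eliminates between other pairs, the resulting cocircuit need not satisfy $|Z^0 \cap X^0| = r-2$, hence need not be adjacent to $X$. In short, your reconnaissance of the difficulty is accurate, but the argument does not close, and the paper does not claim that it should.
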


Conjecture \ref{Ilan-stronger-Conj} implies Conjecture \ref{London-Paris-Conj}: if there is a crabbed path that is no longer than the shortest path between the two cocircuits, then the diameter computed over the topes that contain $X,Y$ is always no larger than the diameter of the entire cocircuit graph. We do not have as much evidence to support Conjecture \ref{Ilan-stronger-Conj}, but due to its remarkable implications if true it is certainly worthy of additional study.
  
\medskip
\noindent {\bf Acknowledgements:} The first author was supported by The Tsinghua-Berkeley Shenzhen Institute. The second and the fourth author were supported by NSF grant DMS-1818969. The third author was supported by NSF grant DMS-1600048. And the fourth author was also supported by NSF HDR TRIPODS grant CCF-1934568. We are grateful for the comments and suggestions we received from Aviv Adler, Louis Billera, Lukas Finschi,
Komei Fukuda, Kolja Knauer, Nati Linial, Francisco Santos, and Tam\'as Terlaky.

\bibliographystyle{plainurl}% the mandatory bibstyle
\bibliography{UOMbib3}

\begin{thebibliography}{10}

\bibitem{ad}
I.~Adler and G.B.D. Dantzig.
\newblock Maximum diameter of abstract polytopes.
\newblock {\em Mathematical Programming Study}, 1:20--40, 1974.

\bibitem{Babson-Finschi-Fukuda}
E.~Babson, L.~Finschi, and K.~Fukuda.
\newblock Cocircuit graphs and efficient orientation reconstruction in oriented
  matroids.
\newblock {\em European J. Combin.}, 22(5):587--600, 2001.
\newblock Combinatorial geometries (Luminy, 1999).
\newblock \href {http://dx.doi.org/10.1006/eujc.2001.0481}
  {\path{doi:10.1006/eujc.2001.0481}}.

\bibitem{bertsimastsitsiklis}
D.~Bertsimas and J.~Tsitsiklis.
\newblock {\em Introduction to Linear Optimization}.
\newblock Athena Scientific Series in Optimization and Neural Computation.
  1997.

\bibitem{OMBook}
A.~Bj\"{o}rner, M.~Las~Vergnas, B.~Sturmfels, N.~White, and G.~M. Ziegler.
\newblock {\em Oriented Matroids}, volume~46 of {\em Encyclopedia of
  Mathematics and its Applications}.
\newblock Cambridge University Press, Cambridge, second edition, 1999.
\newblock \href {http://dx.doi.org/10.1017/CBO9780511586507}
  {\path{doi:10.1017/CBO9780511586507}}.

\bibitem{Criado2017}
F.~Criado and F.~Santos.
\newblock The maximum diameter of pure simplicial complexes and
  pseudo-manifolds.
\newblock {\em Discrete {\&} Computational Geometry}, 58(3):643--649, Oct 2017.
\newblock \href {http://dx.doi.org/10.1007/s00454-017-9888-5}
  {\path{doi:10.1007/s00454-017-9888-5}}.

\bibitem{CriadoSantos}
F.~Criado and F.~Santos.
\newblock Topological prismatoids and small simplicial spheres of large
  diameter.
\newblock 2019.
\newblock URL: \url{https://arxiv.org/abs/1807.03030}.

\bibitem{NetworkX}
NetworkX Developers.
\newblock Networkx, September 2018.
\newblock URL:
  \url{https://networkx.github.io/documentation/networkx-2.2/index.html}.

\bibitem{Eisenbrandetal2010}
F.~Eisenbrand, N.~H\"{a}hnle, A.~Razborov, and T.~Rothvo\ss.
\newblock Diameter of polyhedra: Limits of abstraction.
\newblock {\em Math. Oper. Res.}, 35(4):786--794, 2010.
\newblock URL: \url{https://doi.org/10.1287/moor.1100.0470}, \href
  {http://dx.doi.org/10.1287/moor.1100.0470}
  {\path{doi:10.1287/moor.1100.0470}}.

\bibitem{Felsner-et-al}
S.~Felsner, R.~G\'{o}mez, K.~Knauer, J.~J. Montellano-Ballesteros, and
  R.~Strausz.
\newblock Cubic time recognition of cocircuit graphs of uniform oriented
  matroids.
\newblock {\em European J. Combin.}, 32(1):60--66, 2011.
\newblock \href {http://dx.doi.org/10.1016/j.ejc.2010.07.012}
  {\path{doi:10.1016/j.ejc.2010.07.012}}.

\bibitem{Finschi-thesis}
L.~Finschi.
\newblock {\em A graph theoretical approach for reconstruction and generation
  of oriented matroids}.
\newblock PhD thesis, ETH Z\"urich, 2001.

\bibitem{Fukuda-Finschi}
L.~Finschi and K.~Fukuda.
\newblock Combinatorial generation of small point configurations and hyperplane
  arrangements.
\newblock {\em CCCG '01}, pages 97--100, 2001.

\bibitem{small-OMs}
L.~Finschi and K.~Fukuda.
\newblock Homepage of oriented matroids, November 2001.
\newblock URL: \url{http://www.om.math.ethz.ch/}.

\bibitem{Folkman-Lawrence}
J.~Folkman and J.~Lawrence.
\newblock Oriented matroids.
\newblock {\em J. Combin. Theory Ser. B}, 25(2):199--236, 1978.
\newblock \href {http://dx.doi.org/10.1016/0095-8956(78)90039-4}
  {\path{doi:10.1016/0095-8956(78)90039-4}}.

\bibitem{fukudaterlaky1}
K.~Fukuda and T.~Terlaky.
\newblock Criss-cross methods: A fresh view on pivot algorithms.
\newblock {\em Mathematical Programming: Series A and B}, Volume 79 Issue
  1-3:369--395, October 1997.

\bibitem{fukudaterlaky2}
K.~Fukuda and T.~Terlaky.
\newblock On the existence of a short admissible pivot sequence for feasibility
  and linear optimization problems.
\newblock {\em Pure Mathematics and Applications}, vol 10(4):431--447, 1999.

\bibitem{Kalai-Kleitman}
G.~Kalai and D.~J. Kleitman.
\newblock A quasi-polynomial bound for the diameter of graphs of polyhedra.
\newblock {\em Bull. Amer. Math. Soc. (N.S.)}, 26(2):315--316, 1992.
\newblock \href {http://dx.doi.org/10.1090/S0273-0979-1992-00285-9}
  {\path{doi:10.1090/S0273-0979-1992-00285-9}}.

\bibitem{Klee-Walkup}
V.~Klee and D.~W. Walkup.
\newblock The {$d$}-step conjecture for polyhedra of dimension {$d<6$}.
\newblock {\em Acta Math.}, 117:53--78, 1967.
\newblock \href {http://dx.doi.org/10.1007/BF02395040}
  {\path{doi:10.1007/BF02395040}}.

\bibitem{MSW}
B.~Matschke, F.~Santos, and C.~Weibel.
\newblock The width of five-dimensional prismatoids.
\newblock {\em Proc. Lond. Math. Soc. (3)}, 110(3):647--672, 2015.
\newblock \href {http://dx.doi.org/10.1112/plms/pdu064}
  {\path{doi:10.1112/plms/pdu064}}.

\bibitem{MB-Strausz}
J.J. Montellano-Ballesteros and R.~Strausz.
\newblock A characterization of cocircuit graphs of uniform oriented matroids.
\newblock {\em J. Combin. Theory Ser. B}, 96(4):445--454, 2006.
\newblock \href {http://dx.doi.org/10.1016/j.jctb.2005.09.008}
  {\path{doi:10.1016/j.jctb.2005.09.008}}.

\bibitem{OrientNP}
J.~Richter-Gebert.
\newblock Testing orientability for matroids is {NP}-complete.
\newblock {\em Advances in Applied Mathematics}, 23:78--90, 1999.

\bibitem{Santos}
F.~Santos.
\newblock A counterexample to the {H}irsch conjecture.
\newblock {\em Ann. of Math. (2)}, 176(1):383--412, 2012.
\newblock \href {http://dx.doi.org/10.4007/annals.2012.176.1.7}
  {\path{doi:10.4007/annals.2012.176.1.7}}.

\bibitem{Santos-recent}
F.~Santos.
\newblock Recent progress on the combinatorial diameter of polytopes and
  simplicial complexes.
\newblock {\em TOP}, 21(3):426–460, Oct 2013.
\newblock URL: \url{http://dx.doi.org/10.1007/s11750-013-0295-7}, \href
  {http://dx.doi.org/10.1007/s11750-013-0295-7}
  {\path{doi:10.1007/s11750-013-0295-7}}.

\bibitem{Schrijver1986}
A.~Schrijver.
\newblock {\em Theory of Linear and Integer Programming}.
\newblock John Wiley and Sons, Inc. New York, NY, USA, 1986.

\bibitem{Sukegawa19}
N.~Sukegawa.
\newblock An asymptotically improved upper bound on the diameter of polyhedra.
\newblock {\em Discret. Comput. Geom.}, 62(3):690--699, 2019.
\newblock URL: \url{https://doi.org/10.1007/s00454-018-0016-y}, \href
  {http://dx.doi.org/10.1007/s00454-018-0016-y}
  {\path{doi:10.1007/s00454-018-0016-y}}.

\bibitem{Ziegler-book}
G.~M. Ziegler.
\newblock {\em Lectures on polytopes}, volume 152 of {\em Graduate Texts in
  Mathematics}.
\newblock Springer-Verlag, New York, 1995.
\newblock \href {http://dx.doi.org/10.1007/978-1-4613-8431-1}
  {\path{doi:10.1007/978-1-4613-8431-1}}.

\end{thebibliography}
\end{document}